\newtheorem{theorem}{Theorem}
\numberwithin{theorem}{section}
\numberwithin{equation}{section}
\newtheorem{corollary}[theorem]{Corollary}
\newtheorem{lemma}[theorem]{Lemma}
\newtheorem{definition}[theorem]{Definition}
\newtheorem{proposition}[theorem]{Proposition}
\newtheorem{remark}[theorem]{Remark}
\newcommand{\G}{\Gamma}
\newcommand{\C}{\mathbb{C}}
\newcommand{\R}{\mathbb{R}}
\newcommand{\Z}{\mathbb{Z}}
\newcommand{\N}{\mathbb{N}}
\renewcommand{\H}{\mathbb{H}}
\newcommand{\IM}{\rm{Im}}
\DeclareMathOperator{\impart}{Im}
\DeclareMathOperator{\lcm}{lcm}
\DeclareMathOperator{\SL}{SL}
\DeclareMathOperator{\sgn}{sgn}
\newcommand{\smmat}[4]{\left(\begin{smallmatrix}#1 & #2\\ #3 & #4\end{smallmatrix}\right)}
\begin{document}

\author{Allison Arnold-Roksandich, Brian Diaz,  Erin Ellefsen, Holly Swisher}

\title{Extending a catalog of mock and quantum modular forms to an infinite class}

\subjclass[2010]{11F20, 11F37, 11F27}
\keywords{eta-quotients, modular forms, mock theta functions, mock modular forms, quantum modular forms, shadows of mock modular forms}

\thanks{This work was partially supported by the NSF REU Site Grant DMS-1359173}

\begin{abstract} Utilizing a classification due to Lemke Oliver of eta-quotients which are also theta functions (here called eta-theta functions), Folsom, Garthwaite, Kang, Treneer, and the fourth author constructed a catalog of mock modular forms $V_{mn}$ having weight $3/2$ eta-theta function shadows and showed that these mock modular forms when viewed on certain sets of rationals, transform as quantum modular forms under the action of explicit subgroups.  In this paper, we introduce an infinite class of functions that generalizes one row of the catalog, namely the $V_{m1}$, and show that the functions in this infinite class are both mock modular and quantum modular forms.
\end{abstract}

\maketitle

\section{Introduction and Statement of Results} \label{intro}

Dedekind's $\eta$-function, which is defined by
\begin{equation}\label{def:eta}
\eta(\tau):= q^{\frac{1}{24}}\prod_{n\geq 1} (1-q^n),
\end{equation}
where $\tau$ is in the upper half-plane $\H=\{\tau\in \C \mid \IM(\tau)\}$, and $q=e^{2\pi i \tau}$ is one of the most well-known half-integral weight modular forms.  Quotients of the form 
\[
\prod_{j=1}^{c} \eta(a_j\tau)^{b_j},
\]
for $b_j \in \Z$ are called \emph{eta-quotients}, and have been of interest in many areas of mathematics, including combinatorics, number theory, and representation theory (see \cite{Ono-Web}, \cite{Andrews}, \cite{CN}, for example).   

Classifications of eta-quotients have also been of interest.  Dummit, Kisilevsky, and McKay \cite{DKM} classified all multiplicative \emph{eta-products} (restricting to positive $b_j$), and later Martin \cite{Martin} classified multiplicative integer weight eta-quotients.  Furthering these classifications, Lemke Oliver \cite{LO} recently classified all eta-quotients which are also \emph{theta functions}, which are of the form
\[
\theta_\chi(\tau) := \sum_n \chi(n) n^{\nu} q^{n^2},
\] 
where $\chi$ is an even (resp. odd) Dirichlet character, $\nu$ is $0$ (resp. $1$), and the sum above is over $n\in \Z$ when $\chi$ is trivial, or over $n \in \N$ when $\chi$ is not trivial.  These functions are known to be ordinary modular forms of weight $1/2 + \nu$.

We refer to a function which is both an eta-quotient and a theta function as an \emph{eta-theta} function.  In Lemke Oliver's classification there are six odd (character) eta-theta functions $E_m$, and eighteen even (character) eta-theta functions $e_n$ (some of which are twists by certain principal characters).  

Modular theta functions also arise in the theory of harmonic Maass forms and mock modular forms via the shadow operator.  A \emph{harmonic Maass form} $\widehat{f}$, as originally defined by Bruinier and Funke \cite{BF}, naturally decompose into two parts as $\widehat{f} = f^+  + f^-$, where $f^+$ is called the \emph{holomorphic part} of $\widehat{f}$, and $f^-$ is called the \emph{non-holomorphic part} of $\widehat{f}$. All of Ramanujan's original mock theta functions turn out to be examples of holomorphic parts of harmonic Maass forms \cite{Zwegers2}, and we now define after Zagier \cite{ZagierB} a \emph{mock modular form} to be any such holomorphic part of a harmonic Maass form (for which the non-holomorphic part is not zero).  Mock modular forms come naturally equipped with a \emph{shadow}, which is a certain modular cusp form obtained via a differential operator defined in Section \ref{prelim}. 

In recent work of Folsom, Garthwaite, Kang, Treneer, and the fourth author (Folsom, et al.) \cite[Theorem 1.1]{FGKST} a catalog of functions $V_{mn}$ are constructed, utilizing the even eta-theta functions $e_n$ classified by Lemke Oliver, where each $V_{mn}$ is a weight $1/2$ mock modular form with respect to a particular congruence subgroup $A_{mn}$ such that the shadow of $V_{mn}$ is directly related to the odd eta-theta functions $E_m$ from Lemke Oliver's classification.

Moreover, it is additionally shown in \cite[Theorem 1.2]{FGKST} that the mock modular forms $V_{mn}$ are quantum modular forms.  A \emph{quantum modular form}, as defined by Zagier \cite{Zagier}, and slightly generalized in the relevant book by Bringmann, Folsom, Ono, and Rolen \cite{BFOR}, is a complex function defined on an appropriate subset of rational numbers (instead of the upper half-plane), which transforms like a modular form up to the addition of an error function that is suitably continuous or analytic in $\mathbb R$.  In the emerging theory of quantum modular forms it is interesting to ask how quantum modular forms may arise from mock modular forms (see \cite{BFR2, BOPR, FOR, BFOR}, for example).

In this paper we introduce an infinite class of functions $V_\alpha$ that generalize one row, namely the $V_{m1}$, of the catalog from Folsom et al. in \cite{FGKST}.  We show that these functions $V_\alpha$ are both mock modular and quantum modular forms.  In order to state our results we need some notation of Zwegers \cite{zwegers}.  Throughout the paper, we also use the notation $e(a) := e^{2\pi ia}$ and $\zeta_a := e^\frac{2\pi i}{a}$.

\begin{definition}\label{def:mu}
For $\tau \in \mathbb{H}$ and $u,v \in \C\backslash (\Z\tau + \Z)$,
\[
\mu(u,v;\tau) := \frac{e^{\pi i u}}{\vartheta(v;\tau)}\sum_{n\in\Z}\frac{(-1)^n e^{2\pi i n v}q^{\frac{n(n+1)}{2}}}{1 - e^{2\pi i u}q^{n}},
\]
where, for $z \in \C$ and $\tau \in \mathbb{H}$,
$$ \vartheta(z;\tau) := \sum_{v \in \Z +\frac{1}{2}} e^{\pi i v^2 \tau+2\pi i v\left(z+\frac{1}{2}\right)}.$$
\end{definition}

We can now define our class of functions.

\begin{definition}\label{deffunction}
Let $\alpha = \frac{A}{2C}\tau + \frac{a}{4}$, such that $A,a \in \Z$, $C\in \N$, $0 \leq a \leq 3$, $\gcd(A,C)=1$, and $2\alpha\not\in \Z + \tau \Z$. We then define $V_\alpha(\tau)$ for $\tau \in \H$ by
\begin{align*}\label{function}
V_\alpha(\tau) &:= i^{a+1}q^{-\frac{(2A-C)^2}{8C^2}} \mu\left(2\alpha, \frac{\tau}{2}; \tau\right).
\end{align*}
\end{definition}

\begin{remark}\label{rmk_extension}
Considering the catalog of functions $V_{mn}$ from Folsom et al. \cite{FGKST}, we see that by choosing $(A,C,a)$ from the set
\[
\{(1,4,1), (1,4,0), (1,3,1), (1,12,0), (5,12,0), (1,6,1), (1,3,0)\},
\]
$V_\alpha$ yields the functions $V_{11}, V_{21}, V_{31}, V_{4'1}, V_{4''1}, V_{51}, V_{61}$.  In particular, we note that we can generate the first row of each table in \cite{FGKST} using the $V_\alpha$ function.  We further note that there are infinitely many appropriate choices for $A$ and $C$.
\end{remark}

We first show that the functions $V_\alpha(\tau)$ are mock modular forms for appropriate groups $A_\alpha$ defined in Section \ref{mock}.

\begin{theorem}\label{mockmodularity}
For $\alpha = \frac{A}{2C}\tau + \frac{a}{4}$ where $a \in \{0,1,2,3\}$, $0 < \frac{A}{C} < 1$, and $\frac{A}{C} \neq \frac12$ when $a$ is odd, the functions $V_{\alpha}$ are mock modular forms of weight $1/2$ with respect to the congruence subgroups $A_{\alpha}$. \end{theorem}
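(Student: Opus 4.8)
The plan is to exhibit each $V_\alpha$ as the holomorphic part of a harmonic Maass form by means of Zwegers' non-holomorphic completion of $\mu$ \cite{zwegers}. Recall that Zwegers defines the correction term
\[
R(u;\tau) := \sum_{\nu \in \frac12 + \Z}\left(\sgn(\nu) - E\left(\left(\nu + \tfrac{\impart(u)}{\impart(\tau)}\right)\sqrt{2\impart(\tau)}\right)\right)(-1)^{\nu - \frac12}e^{-\pi i \nu^2 \tau - 2\pi i \nu u},
\]
with $E(z) = 2\int_0^z e^{-\pi t^2}\,dt$, and shows that the completed function $\widehat{\mu}(u,v;\tau) := \mu(u,v;\tau) + \tfrac{i}{2}R(u-v;\tau)$ transforms as a weight $1/2$ Jacobi-type object under both the elliptic shifts $u,v \mapsto u,v + \Z\tau + \Z$ and the modular group $\SL_2(\Z)$. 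I would therefore set
\[
\widehat{V_\alpha}(\tau) := i^{a+1}q^{-\frac{(2A-C)^2}{8C^2}}\,\widehat{\mu}\!\left(2\alpha,\tfrac{\tau}{2};\tau\right) = V_\alpha(\tau) + i^{a+1}q^{-\frac{(2A-C)^2}{8C^2}}\cdot\tfrac{i}{2}R\!\left(2\alpha - \tfrac{\tau}{2};\tau\right),
\]
noting that $2\alpha - \tfrac{\tau}{2} = \tfrac{2A-C}{2C}\tau + \tfrac{a}{2}$ is exactly the combination measured by the prefactor, and aim to prove that $\widehat{V_\alpha}$ is a harmonic Maass form of weight $1/2$ on a congruence subgroup $A_\alpha$ whose holomorphic part is $V_\alpha$.

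The heart of the argument is to compute the transformation of $\widehat{V_\alpha}$ under a set of generators of $\SL_2(\Z)$. The subtlety is that the arguments $u = 2\alpha = \tfrac{A}{C}\tau + \tfrac{a}{2}$ and $v = \tfrac{\tau}{2}$ both depend on $\tau$, so they do not transform by the plain substitution $\tau \mapsto \gamma\tau$. For $\gamma = \smmat{p}{q}{r}{s} \in \SL_2(\Z)$, Zwegers' modular law governs $\widehat{\mu}\big(\tfrac{u}{r\tau+s},\tfrac{v}{r\tau+s};\gamma\tau\big)$, so the strategy is to express the genuinely transformed arguments $\tfrac{A}{C}\gamma\tau + \tfrac{a}{2}$ and $\tfrac{\gamma\tau}{2}$ in the form $\tfrac{u}{r\tau+s} + \ell\tau + m$ and $\tfrac{v}{r\tau+s} + \ell'\tau + m'$ with $\ell,\ell',m,m' \in \mathbb{Q}$, and then to remove the lattice shifts $\ell\tau+m$, $\ell'\tau+m'$ using the elliptic transformation formulas. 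Each such shift contributes an explicit exponential and root-of-unity multiplier; the $q$-power prefactor $q^{-(2A-C)^2/(8C^2)}$ is engineered precisely so that these theta-multipliers, together with the Jacobi-type exponential factor produced by Zwegers' modular law, collapse into a single weight $1/2$ automorphy factor with no residual $\tau$-dependence.

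Determining $A_\alpha$ then amounts to identifying exactly which $\gamma$ close the computation up. Because the elliptic formulas reproduce $\widehat{\mu}$ cleanly only when the shifts $\ell,\ell',m,m'$ lie in the correct cosets (integral, or half-integral of compatible parity), requiring that the transformed arguments return to $2\alpha$ and $\tfrac{\tau}{2}$ forces congruence conditions on the entries $p,q,r,s$ expressed through $A$, $C$, and $a$ --- typically a divisibility of $r$ by a multiple of $C$ together with parity and residue constraints coming from the $\tfrac{a}{2}$- and $\tfrac{\tau}{2}$-shifts --- and $A_\alpha$ is defined as the subgroup cut out by these. Harmonicity is then immediate: since $\widehat{\mu}$ is real-analytic and annihilated by the weight $1/2$ Laplacian, so is $\widehat{V_\alpha}$, and because the companion built from $R$ is manifestly non-holomorphic and nonzero, the holomorphic part of $\widehat{V_\alpha}$ is $V_\alpha$, which is by definition a mock modular form of weight $1/2$. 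Applying $\xi_{1/2}$ to the $R$-term produces the associated weight $3/2$ theta function, which I expect to match a scalar multiple of one of the odd eta-theta functions $E_m$, in keeping with the catalog of Folsom et al. \cite{FGKST}. Throughout, the hypotheses $0 < \tfrac{A}{C} < 1$ and $\tfrac{A}{C} \neq \tfrac12$ for odd $a$ are what keep the construction inside the domain of Definition \ref{deffunction}, away from the poles of $\mu$ and the degenerate half-period configurations where the mock structure would break down.

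The main obstacle will be the bookkeeping in the second step: correctly rewriting the $\tau$-dependent arguments after the modular substitution as a canonical argument plus a lattice shift, and then tracking the resulting product of theta- and root-of-unity multipliers through the elliptic transformation laws. It is exactly the demand that all of these factors amalgamate into a single weight $1/2$ multiplier, leaving no leftover dependence on $\tau$, that simultaneously pins down the defining congruences of $A_\alpha$ and confirms the claimed weight; consequently the delicate case analysis --- in particular the dependence on the parity of $a$ and the exclusion of $\tfrac{A}{C} = \tfrac12$ when $a$ is odd --- is where the real work of the proof will be concentrated.
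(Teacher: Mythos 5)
Your proposal follows essentially the same route as the paper: complete $V_\alpha$ via Zwegers' $\widehat{\mu}$, rewrite the transformed elliptic arguments as the original ones plus lattice shifts, use the elliptic and modular transformation laws of $\widehat{\mu}$ to extract a weight-$1/2$ automorphy factor (with the prefactor $q^{-(2A-C)^2/(8C^2)}$ cancelling the residual $\tau$-dependence), read off $A_\alpha$ from the integrality of those shifts, and deduce harmonicity from that of $\widehat{\mu}$. The only condition you leave implicit that the paper checks explicitly is the linear exponential growth at the cusps, which follows routinely from the known asymptotics of $R(a\tau-b)$.
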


In addition, we prove quantum modularity results for $V_\alpha(\tau)$ for appropriate subsets $S_\alpha$ of rationals, and appropriate groups $G_\alpha\subseteq A_\alpha$, which are defined in Section \ref{quantum}.  In particular, we prove the following theorem.

\begin{theorem}\label{quantummodularity}
For $\alpha = \frac{A}{2C}\tau + \frac{a}{4}$ where $a \in \{0,1,2,3\}$, $0 < \frac{A}{C} < 1$, and $\frac{A}{C} \neq \frac12$ when $a$ is odd, the functions $V_{\alpha}$ are quantum modular forms on the sets $S_{\alpha}$ with respect to the groups $G_{\alpha}$. In particular, the following are true. 
\begin{enumerate}

\item Suppose $a \in \{0,2\}$. Then for all  $\tau \in \mathbb{H} \cup S_{\alpha} \backslash \{ -1\}$, we have that 
$$V_{\alpha}(\tau) - \zeta_8^{-1}(\tau + 1)^{-1/2}V_{\alpha}\left(\frac{\tau}{\tau + 1}\right) = \frac{-i}{2}e\left(\frac{-A}{2C}\right)\int_1^{i\infty}\frac{g_{A/C,1/2}(z)}{\sqrt{-i(z + \tau)}}dz.$$

\item Suppose $a \in \{1,3\}$. Then for all $\tau \in \mathbb{H} \cup S_{\alpha} \backslash \{ -\frac{1}{2} \}$, we have that 
$$V_{\alpha}(\tau) - (2\tau+1)^{-1/2}V_{\alpha}\left(\frac{\tau}{2\tau + 1}\right) = \frac{-i}{2}\int_{1/2}^{i\infty}\frac{g_{A/C,0}(z)}{\sqrt{-i(z+\tau)}}dz.$$

\item For all $\tau \in \mathbb{H}\cup S_{\alpha}$, we have if $C$ is even,
$$V(\tau) - (-1)^{A + \frac{C}{2}}e\left(\frac{C}{8}\right)e\left(\frac{C(2A-C)^2}{8C^2}\right)V(\tau + C) = 0,$$
and if C is odd,
$$V(\tau) + e\left(\frac{C}{4}\right)e\left(\frac{(2A-C)^2}{4C}\right)V(\tau + 2C) = 0.$$
\end{enumerate}
\end{theorem}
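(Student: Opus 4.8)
The plan is to derive all three transformation laws from Zwegers' modular transformation theory for the function $\mu$, using the mock modularity established in Theorem \ref{mockmodularity} as the organizing principle. Recall that Zwegers completes $\mu$ to a real-analytic function $\widehat\mu(u,v;\tau) = \mu(u,v;\tau) + \tfrac{i}{2}R(u-v;\tau)$ that transforms as a weight $1/2$ object under $\SL_2(\Z)$ (with the standard theta multiplier), and that the non-holomorphic term $R$ admits a period-integral representation against the weight $3/2$ unary theta function $g_{a,b}$. Writing $\widehat{V}_\alpha = V_\alpha + V_\alpha^-$, where $V_\alpha^-$ is the corresponding non-holomorphic Eichler integral, Theorem \ref{mockmodularity} guarantees that $\widehat{V}_\alpha$ is genuinely modular on $A_\alpha \supseteq G_\alpha$. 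Hence for any $\gamma = \smmat{a'}{b'}{c'}{d'} \in G_\alpha$ the holomorphic defect
\[
V_\alpha(\tau) - \nu(\gamma)^{-1}(c'\tau + d')^{-1/2}V_\alpha(\gamma\tau)
\]
equals exactly the difference $\nu(\gamma)^{-1}(c'\tau+d')^{-1/2}V_\alpha^-(\gamma\tau) - V_\alpha^-(\tau)$ of non-holomorphic completion terms, where $\nu$ denotes the multiplier. This reduces each part of the theorem to (a) identifying the relevant generator $\gamma$, (b) bookkeeping the automorphy factor $\nu(\gamma)$, and (c) reorganizing the difference of Eichler integrals into a single period integral of $g$.

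For parts (1) and (2) I would take $\gamma$ to be the lower-triangular parabolic generator: $\smmat{1}{0}{1}{1}$ when $a$ is even and $\smmat{1}{0}{2}{1}$ when $a$ is odd (the two cases differ because the elliptic variable $v = \tau/2$ forces distinct admissible translations, as in part (3) below). Noting $\smmat{1}{0}{c}{1} = S^{-1}T^{-c}S$, I would propagate the change of variables $u = 2\alpha = \tfrac{A}{C}\tau + \tfrac a2$, $v = \tfrac\tau2$ through Zwegers' $S$- and $T$-transformations, tracking how the $q$-power prefactor $q^{-(2A-C)^2/(8C^2)}$ completes the relevant square (here $u - v = \tfrac{2A-C}{2C}\tau + \tfrac a2$, which is why the shadow carries first index $A/C$ and second index $1/2$ for even $a$ or $0$ for odd $a$). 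The difference of completion terms then collapses, via the weight $3/2$ modularity of $g$ under $\gamma$, to the single integral $\tfrac{-i}{2}e(\tfrac{-A}{2C})\int_1^{i\infty}\tfrac{g_{A/C,1/2}(z)}{\sqrt{-i(z+\tau)}}\,dz$ in case (1) and its analogue with $g_{A/C,0}$ in case (2); the lower endpoint $1$ (resp. $\tfrac12$) is the image of the relevant cusp, while the excluded point $-1$ (resp. $-\tfrac12$) is exactly the pole $c'\tau + d' = 0$ of the automorphy factor. Finally, because $g$ is a cusp form and hence decays exponentially at $i\infty$, this integral converges and is real-analytic for all $\tau \in \R$ away from the excluded point, which both extends the identity from $\H$ to $S_\alpha$ and exhibits the requisite quantum error function.

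For part (3) I would instead use the translation $\tau \mapsto \tau + C$ (for $C$ even) or $\tau \mapsto \tau + 2C$ (for $C$ odd). The key point is that under $\tau \mapsto \tau + h$ the variables shift by $u \mapsto u + \tfrac{Ah}{C}$ and $v \mapsto v + \tfrac h2$, so choosing $h = C$ (even) or $h = 2C$ (odd) makes both shifts integral; this is precisely the parity condition that dictates the two cases. Iterating $\mu(u,v;\tau+1) = \zeta_8^{-1}\mu(u,v;\tau)$ together with the elliptic relations $\mu(u+1,v;\tau) = -\mu(u,v;\tau)$ and $\mu(u,v+1;\tau) = -\mu(u,v;\tau)$, and incorporating the phase acquired by the prefactor $q^{-(2A-C)^2/(8C^2)}$, produces the stated root of unity. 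Because only integral shifts of $u$ and $v$ occur, no non-holomorphic correction is triggered and the defect is exactly $0$; equivalently, translating by (a multiple of) the period leaves the completion consistent, so $V_\alpha$ is genuinely periodic up to the computed constant.

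The main obstacle will be the precise bookkeeping in step (c): pinning down the exact constant $\tfrac{-i}{2}e(\tfrac{-A}{2C})$ and the integration endpoints requires carefully matching Zwegers' normalization of $R$, the index shifts ($A/C$, and $1/2$ vs. $0$) entering $g_{a,b}$, and the theta multiplier through the conjugation $\smmat{1}{0}{c}{1} = S^{-1}T^{-c}S$, all while collapsing a difference of two non-holomorphic Eichler integrals into one. A secondary but essential technical point is verifying convergence of the resulting integral at its lower endpoint so that the transformation law, initially valid on $\H$, genuinely persists on the rational set $S_\alpha$ and yields a bona fide quantum modular form in the sense of Zagier.
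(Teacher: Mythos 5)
Your outline for part (3) matches the paper's Proposition \ref{ttransshift} essentially verbatim, and your strategy for parts (1) and (2) --- pass to the completion $\widehat{V}_\alpha$, invoke its genuine modularity on $A_\alpha \supseteq G_\alpha$, and collapse the resulting difference of non-holomorphic period integrals into a single integral of $g$ --- is a legitimate alternative to what the paper actually does. The paper instead works with the incomplete $\mu$ directly: it writes $M_r\tau = -1/\tau_r$ with $\tau_r = -\tfrac{1}{\tau}-r$, applies Zwegers' transformation (Lemma \ref{lem_mu}(5)) twice to collect two Mordell-integral error terms (Lemma \ref{mprops}), converts each $h$ into a $g$-integral plus a correction $\delta_{x,y}$, and then shows the two $\delta$-terms cancel. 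Your route would trade the Mordell integral $h$ for the function $R$ and Theorem \ref{thm_z116}(1). Either way, what you defer as ``bookkeeping'' contains the one genuinely delicate step: in the paper it is the cancellation \eqref{Delta0}, which for odd $a$ requires the boundary cases of Lemma \ref{lem_Zlem} and is precisely where the hypothesis $\frac{A}{C}\neq\frac12$ is consumed; in your version the same issue resurfaces when reducing the transformed indices of $g_{a,b}$ back to $g_{A/C,\,\frac12-\frac{a}{2}}$ via Lemma \ref{lem_gabprop}, and when checking that the leftover boundary terms (your difference of Eichler integrals does not collapse cleanly when an index sits at $\pm\frac12$) actually vanish. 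Also a small normalization slip: $\widehat{\mu}=\mu+\frac12 R$, not $\mu+\frac{i}{2}R$.

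The genuine gap is that you never establish that $S_\alpha$ is a quantum set, i.e., that $V_\alpha(x)$ and $V_\alpha(Mx)$ are well-defined finite values for $x\in S_\alpha$ and $M\in G_\alpha$. Real-analyticity of the error integral on $\R$ away from $-\frac1r$ does not by itself ``extend the identity from $\H$ to $S_\alpha$'': the definition of $V_\alpha$ involves $\mu(2\alpha,\tfrac{\tau}{2};\tau)$, whose defining series and the Jacobi theta function in its denominator both degenerate as $\tau$ tends to a rational, so finiteness at rationals is not automatic. The paper devotes Lemma \ref{lem:eta_vanishing} (via Kang's identity, Theorem \ref{kangthm}, expressing $\mu(2\alpha,\tfrac{\tau}{2};\tau)$ through the universal mock theta function $g_2$, whose series terminates at the relevant roots of unity) and Theorem \ref{setsandgroups} (a case analysis on $a$ and on the divisibility conditions defining $S_{C1}$, $S_{C2}$, $S_{ev}$, together with closure of $S_\alpha$ under the generators of $G_\alpha$ and their inverses) to exactly this point. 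Without that argument you have a transformation law on $\H$, but not a quantum modular form on $S_\alpha$.
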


In Section \ref{prelim}, we will provide necessary background. In Section 3, we prove Theorem \ref{mockmodularity}, and in Section 4, we prove Theorem \ref{quantummodularity}. 

\section{Preliminaries}\label{prelim}

It is well-known that the theta functions $\vartheta(z;\tau)$ defined in Definition \ref{def:mu} may be written as  
\begin{equation}\label{eq:theta}
\vartheta(v;\tau) = -iq^{\frac{1}{8}}e^{-\pi i v}\prod_{n\geq 1}(1-q^n)(1-e^{2\pi i v}q^{n-1})(1-e^{-2\pi i v}q^n).
\end{equation}

Zwegers \cite{zwegers} showed that the $\mu$ function defined in Definition \ref{def:mu}, has useful transformation properties.  To state these, recall that for $u\in\C$ and $\tau\in\mathbb{H}$, Zwegers defines the Mordell integral $h$ by

\begin{equation}\label{Mordell}
h(u)=h(u;\tau):=\int_{\R}\frac{e^{\pi i\tau x^2 - 2\pi ux}}{\cosh{\pi x}}dx.
\end{equation}

\begin{lemma}[Zwegers \cite{zwegers}]  \label{lem_mu} 
Let $\mu(u,v):=\mu(u,v;\tau)$ and $h(u;\tau)$ be defined as in Definition \ref{def:mu} and (\ref{Mordell}). Then we have
\begin{enumerate}
\item[(1)] $\mu(u+1,v)=-\mu(u,v)$,
\item[(2)] $\mu(u,v+1)=-\mu(u,v)$,
\item[(3)] $\mu(-u,-v)=\mu(u,v)$,
\end{enumerate}
as well as the modular transformation properties,
\noindent
\begin{enumerate}
\item[(4)] $\mu(u,v;\tau+1)=e^{-\frac{\pi i}{4}}\mu(u,v;\tau)$,
\item[(5)] $\frac{1}{\sqrt{-i\tau}}e^{\pi i(u-v)^2/\tau}\mu\left(\frac{u}{\tau},\frac{v}{\tau};-\frac{1}{\tau}\right)+\mu(u,v;\tau)=\frac{1}{2i}h(u-v;\tau)$.
\end{enumerate}
\end{lemma}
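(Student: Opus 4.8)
The plan is to obtain properties (1)--(4) as direct consequences of the series and product definitions of $\mu$ and $\vartheta$, and to reserve the real work for the modular identity (5). For (1), replacing $u$ by $u+1$ leaves $e^{2\pi i u}$, and hence every denominator $1-e^{2\pi i u}q^n$, unchanged while sending the prefactor $e^{\pi i u}$ to $-e^{\pi i u}$; since $\vartheta(v;\tau)$ is independent of $u$, this gives $\mu(u+1,v)=-\mu(u,v)$. For (2), replacing $v$ by $v+1$ leaves each numerator $(-1)^n e^{2\pi i n v}q^{n(n+1)/2}$ fixed (as $e^{2\pi i n}=1$), so the sum is unaffected, and the only change is $\vartheta(v+1;\tau)=-\vartheta(v;\tau)$, which holds because each $w\in\Z+\tfrac12$ contributes a factor $e^{2\pi i w}=-1$. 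For (3), I would substitute $n\mapsto -n$ in the defining sum and simplify the resulting denominator $1-e^{-2\pi i u}q^{-n}$ to $-e^{-2\pi i u}q^{-n}(1-e^{2\pi i u}q^n)$, which turns the series for $\mu(-u,-v)$ into $-e^{2\pi i u}$ times the series for $\mu(u,v)$; combined with the oddness $\vartheta(-v;\tau)=-\vartheta(v;\tau)$ and the prefactor ratio $e^{-\pi i u}/e^{\pi i u}$, all extra factors cancel. For (4), the key observation is that $q=e^{2\pi i\tau}$ is literally invariant under $\tau\mapsto\tau+1$, so the whole $\mu$-sum (whose $q$-exponents $n(n+1)/2$ and $n$ are integers) is unchanged, while a short computation using $w^2\in\Z+\tfrac14$ yields $\vartheta(v;\tau+1)=e^{\pi i/4}\vartheta(v;\tau)$; dividing produces the stated factor $e^{-\pi i/4}$.

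The substance is in (5), where I would follow a Liouville-type argument, viewing both sides as meromorphic functions of $u$ with $v,\tau$ fixed. First I would set
\[
F(u) := \frac{1}{\sqrt{-i\tau}}e^{\pi i(u-v)^2/\tau}\mu\left(\tfrac{u}{\tau},\tfrac{v}{\tau};-\tfrac1\tau\right) + \mu(u,v;\tau) - \frac{1}{2i}h(u-v;\tau),
\]
and note that $h(u-v;\tau)$ is entire, while both $\mu$-terms have simple poles exactly on the common lattice $\Z+\Z\tau$ (for the transformed term because $\tfrac{u}{\tau}\in\Z(-\tfrac1\tau)+\Z$ amounts to $u\in\Z+\Z\tau$). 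The first step is a residue computation: using that the $n=0$ term gives $\operatorname{Res}_{u=0}\mu(u,v;\tau)=-\tfrac{1}{2\pi i\,\vartheta(v;\tau)}$ together with the exponential prefactor, I would show the residues of the two $\mu$-terms are negatives of one another at every lattice point, so that $F$ is entire.

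Next I would establish that $F$ is doubly periodic. Periodicity under $u\mapsto u+1$ follows from applying (1) to both $\mu$-terms together with the corresponding shift relation for $h$; periodicity under $u\mapsto u+\tau$ is the delicate case, requiring the quasi-periodicity of $\mu$ in its first argument combined with the inhomogeneous $u\mapsto u+\tau$ functional equation of the Mordell integral $h$, the point being that the inhomogeneous terms cancel precisely. An entire doubly periodic function is bounded on a fundamental parallelogram and hence bounded everywhere, so by Liouville $F\equiv c$ is constant. Finally I would pin down $c=0$ by evaluating at a convenient symmetric value of $u$, or via the asymptotics as $\impart u\to\infty$, where the $\mu$-terms and $h$ have controllable limits; the consistency of (5) under $(u,v)\mapsto(-u,-v)$, which follows from (3) and the evenness of $h$, gives a useful check here.

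I expect the main obstacles to be, in order: (a) carrying out the residue matching with the correct normalizing constants, since this is where the factor $\tfrac{1}{\sqrt{-i\tau}}e^{\pi i(u-v)^2/\tau}$ must be reconciled with the pole structure; (b) deriving and correctly applying the $u\mapsto u+\tau$ functional equation of $h$, which itself needs a contour-shift argument and is the engine that makes $F$ genuinely periodic rather than merely quasi-periodic; and (c) evaluating the residual constant, where a poorly chosen test point can make both sides singular. Properties (1)--(4), by contrast, should be purely mechanical.
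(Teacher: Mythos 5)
Your parts (1)--(4) are correct, and your residue computation for entirety in (5) is also right (note that matching the residues of the two $\mu$-terms requires the modular transformation $\vartheta(v/\tau;-1/\tau)=-i\sqrt{-i\tau}\,e^{\pi i v^2/\tau}\vartheta(v;\tau)$, which is where the prefactor $\frac{1}{\sqrt{-i\tau}}e^{\pi i(u-v)^2/\tau}$ gets reconciled with the pole structure, exactly as you anticipate). Keep in mind that the paper itself offers no proof of this lemma --- it is imported from Zwegers' thesis --- so the benchmark is Zwegers' argument, whose overall skeleton (show the difference is entire via residue cancellation, then apply elliptic function theory) your plan correctly guesses.

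There are, however, two genuine gaps in your treatment of (5), both in the periodicity step. First, the claim that invariance under $u\mapsto u+1$ ``follows from applying (1) to both $\mu$-terms'' is false: the elliptic variable of the transformed term is $u/\tau$, and shifting $u$ by $1$ shifts $u/\tau$ by $1/\tau=-\tau'$, a \emph{lattice} translation relative to the modular variable $\tau'=-1/\tau$, not an integer one. So this term must be handled with the anomalous quasi-periodicity of $\mu$ under translation of its first argument by the modular period (Zwegers' Proposition 1.4), which produces an inhomogeneous additive term; that term is precisely what cancels the inhomogeneity in $h(z)+h(z+1)=\frac{2}{\sqrt{-i\tau}}e^{\pi i(z+\frac{1}{2})^2/\tau}$ (Lemma \ref{htrans}(1)). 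If both $\mu$-terms merely changed sign, the $h$-inhomogeneity would survive and $F$ would satisfy no clean shift relation at all; the pairing is the opposite of what you wrote, and symmetrically, under $u\mapsto u+\tau$ it is the \emph{transformed} term that becomes homogeneous via property (1) while the first $\mu$-term supplies the anomaly cancelling Lemma \ref{htrans}(2). Second, and more fundamentally, even when this is done correctly what you obtain is not double periodicity but $F(u+1)=-F(u)$ and $F(u+\tau)=-e^{2\pi i(u-v)+\pi i\tau}F(u)$: $F$ transforms with theta-type multipliers whose modulus is not $1$, so $F$ is not bounded on translates of a fundamental parallelogram and Liouville cannot be applied to $F$ itself. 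The standard repair --- which is exactly Zwegers' uniqueness argument --- is to pass to the product $F(u)\,\vartheta(u-v;\tau)$: the multipliers cancel, the product is entire and genuinely elliptic, hence constant by Liouville, and evaluating at $u=v$, where $\vartheta(0;\tau)=0$, forces that constant to be $0$; hence $F\equiv 0$. This also eliminates your final step: there is no undetermined constant left to evaluate, which is fortunate, since as written that step rests on the invalid Liouville application.
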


\noindent The function $h$ also has some useful properties. 

\begin{lemma}[Zwegers \cite{zwegers}] \label{htrans}
The function h has the following properties:
\begin{enumerate}
\item[(1)] $h(z) + h(z+1) = \frac{2}{\sqrt{-i\tau}}e^{\pi i\left(z + \frac{1}{2}\right)^2/\tau}$
\item[(2)] $h(z) + e^{-2\pi iz - \pi i\tau}h(z + \tau) = 2e^{-\pi iz - \pi i\tau/4}$
\item[(3)] $h$ is an even function of $z$.
\end{enumerate}
\end{lemma}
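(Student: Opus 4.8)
The plan is to derive all three identities directly from the integral definition \eqref{Mordell}, since each reduces to an elementary manipulation of $\int_\R \frac{e^{\pi i \tau x^2 - 2\pi z x}}{\cosh \pi x}\,dx$. I write $z$ for the argument and $x$ for the variable of integration, and I record once that convergence of every integral below, as well as the vanishing of the auxiliary contour pieces, is guaranteed by the Gaussian factor $e^{\pi i \tau x^2}$, whose modulus decays like $e^{-\pi \impart(\tau) x^2}$ as $|\repart(x)| \to \infty$ because $\impart(\tau) > 0$. Property (3) is then immediate: substituting $x \mapsto -x$ in $h(-z) = \int_\R \frac{e^{\pi i \tau x^2 + 2\pi z x}}{\cosh \pi x}\,dx$ leaves $e^{\pi i \tau x^2}$ and $\cosh \pi x$ fixed while sending $e^{2\pi z x}$ to $e^{-2\pi z x}$, recovering $h(z)$.

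For property (1) I would add the integrals for $h(z)$ and $h(z+1)$ under a single integral sign; the integrand of $h(z+1)$ differs from that of $h(z)$ only by the factor $e^{-2\pi x}$, so the combined numerator carries $1 + e^{-2\pi x} = 2 e^{-\pi x}\cosh \pi x$. The $\cosh \pi x$ cancels the denominator, leaving the pure Gaussian $2\int_\R e^{\pi i \tau x^2 - 2\pi(z + 1/2)x}\,dx$. Completing the square in the exponent and shifting the contour back to $\R$ (justified by the decay above) reduces this to $2 e^{\pi i (z+1/2)^2/\tau}\int_\R e^{\pi i \tau x^2}\,dx$, and evaluating the standard Gaussian as $(-i\tau)^{-1/2}$ with the principal branch gives the claimed right-hand side.

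Property (2) is the one requiring genuine complex analysis. I would compare $h(z)$ with the integral of the same integrand $F(x) := \frac{e^{\pi i \tau x^2 - 2\pi z x}}{\cosh \pi x}$ along the shifted line $\R + i$. A short computation using $(x+i)^2 = x^2 + 2ix - 1$ and $\cosh(\pi x + \pi i) = -\cosh \pi x$ shows that $F(x+i) = -e^{-\pi i \tau - 2\pi i z}\,G(x)$, where $G$ is exactly the integrand of $h(z+\tau)$; hence $\int_{\R + i} F\,dx = -e^{-\pi i \tau - 2\pi i z}\,h(z+\tau)$. I would then apply the residue theorem to $F$ on the rectangle with vertices $\pm R$ and $\pm R + i$: the vertical sides vanish as $R \to \infty$, and the only pole of $F$ in the strip $0 < \impart(x) < 1$ is the simple pole at $x = i/2$ coming from the zero of $\cosh \pi x$. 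Since the top edge is traversed right-to-left, the boundary integral combines into $h(z) + e^{-2\pi i z - \pi i \tau} h(z+\tau)$; computing the residue at $x = i/2$ as $\frac{1}{\pi i}\,e^{-\pi i \tau/4 - \pi i z}$ and multiplying by $2\pi i$ yields $2 e^{-\pi i z - \pi i \tau/4}$, as required.

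The main obstacle is the rigorous justification of the contour manipulations in (2)—verifying that the contributions from the vertical segments of the rectangle tend to $0$ and that $x = i/2$ is the unique enclosed pole—together with the analogous contour shift hidden in the Gaussian evaluation for (1). Both rest on the uniform Gaussian decay noted at the outset, so once that estimate is recorded the remaining steps are routine residue and Gaussian computations.
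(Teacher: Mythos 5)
Your proof is correct: the substitution $x\mapsto -x$ for (3), the identity $1+e^{-2\pi x}=2e^{-\pi x}\cosh\pi x$ followed by completing the square and the Gaussian evaluation $\int_\R e^{\pi i\tau x^2}\,dx=(-i\tau)^{-1/2}$ for (1), and the rectangular contour with the single simple pole at $x=i/2$ (residue $\frac{1}{\pi i}e^{-\pi i\tau/4-\pi iz}$) for (2) all check out, and the contour-shift justifications via the decay of $e^{\pi i\tau x^2}$ are exactly what is needed. Note that the paper itself offers no proof of this lemma---it is quoted directly from Zwegers' thesis---and your argument is essentially the standard one given there, so there is nothing to compare beyond observing that you have correctly reconstructed the cited source's proof.
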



We now recall some basic facts about harmonic Maass forms. The definition we state below is essentially that of Bruinier and Funke in \cite{BF}. Note that for $k\in\frac{1}{2}\Z$, the weight $k$ hyperbolic Laplacian operator $\Delta_{k}$ is defined as
$$
\Delta_k = -y^2\left(\frac{\partial^2}{\partial x^2}+\frac{\partial^2}{\partial y^2} \right)
+iky\left(\frac{\partial}{\partial x}+i\frac{\partial}{\partial y}\right),
$$
where $\tau=x+iy$.

\begin{definition}\label{MaassForm}
Given $k\in \frac{1}{2}\Z$, a finite index subgroup $\Gamma\subset\SL_2(\Z)$, and 
a multiplier $\psi$ (so $\psi:\Gamma\rightarrow\mathbb{C}$ with $|\psi|=1$),
a \emph{harmonic Maass form} of weight $k$ for the subgroup $\Gamma$, with multiplier 
$\psi$, is any smooth function $\widehat{f}:\H\rightarrow\mathbb{C}$
such that the following three properties hold.
\begin{enumerate}
\item
For all $\gamma=\smmat{a}{b}{c}{d} \in \Gamma$ and 
$\tau \in \H$, $$\widehat{f}(\gamma \tau) = \psi(\gamma)(c\tau+d)^{k} \widehat{f}(\tau).$$ 
\item
We have that $\Delta_k(\widehat{f})=0$. 
\item
The function $\widehat{f}$ has linear exponential growth at the cusps in the 
following sense. For each 
$\gamma=\smmat{a}{b}{c}{d}\in\SL_2(\Z)$, there exists a polynomial 
$p_{A}(\tau)\in \mathbb{C}[q^{-\frac{1}{N}}]$ (with $N$ a positive integer) such that 
$(c\tau+d)^{-k}\widehat{f}(\tau) - p_{A}(\tau) = O(e^{-\epsilon y})$ as $y\rightarrow \infty$ 
for some $\epsilon > 0$. 
\end{enumerate}
\end{definition}

The Fourier series of a harmonic Maass form $\widehat{f}$ of weight $k$ naturally 
decomposes as the sum of a holomorphic part, $f^+$, and non-holomorphic part, $f^-$, such that \[\widehat{f} = f^+ + f^-.\]  Using terminology of Zagier \cite{Zagier}, the holomorphic part $f^+$ is called a \emph{mock modular form} of weight $k$.  

Recall the function $\mu(u,v;\tau)$ defined in Definition \ref{def:mu}. Zwegers \cite{zwegers} showed that $\mu$ can be completed to a non-holomorphic modular Jacobi form, by
\begin{equation}\label{muhat}
\widehat{\mu}(u,v;\tau) := \mu(u,v;\tau)+\frac{1}{2}R(u-v;\tau),
\end{equation}
where 
\[
R(u;\tau) := \sum_{v\in\frac{1}{2}+\Z}\left(\sgn(v) - 2\int_0^{\left(v+\frac{\impart(u)}{\impart(\tau)}\right)\sqrt{2\impart(\tau)}}e^{-\pi t^2}\,dt \right)(-1)^{v-\frac{1}{2}}e^{\pi iv^2\tau-2\pi ivu}.
\]

The following lemma demonstrates how $\widehat{\mu}$ transforms.  Here we let $\psi$ denote the character in the modular transformation for $\eta(\tau)$, which can be defined (as in Knopp \cite[Ch. 4, Thm. 2]{Knopp}) for $\gamma = \smmat{a}{b}{c}{d} \in \SL_2(\Z)$ by
\begin{align}\label{psidef}
\psi(\gamma) &=
	\left\{
	\begin{array}{ll}
		\big(\frac{d}{|c|} \big)
		\exp\left(\frac{\pi i}{12}\left(
			(a+d)c - b d(c^2-1) - 3c		
		\right)\right)
		&
		\mbox{ if } c \equiv 1 \pmod{2},
		\\				
		\left(\frac{c}{d}\right)
		\exp\left(\frac{\pi i}{12}\left(
			(a+d)c - b d(c^2-1) + 3d - 3 - 3cd		
		\right)\right)
		&
		\mbox{ if } d\equiv 1\pmod{2},
	\end{array}
	\right.
\end{align}
where $\left(\frac{\cdot}{\cdot}\right)$ is the Kronecker symbol.

\begin{lemma}\label{zwemutrans}\cite[Prop. 1.11(1,2)]{zwegers}
Let $\widehat{\mu}(u,v;\tau)$ be defined as in (\ref{muhat}). Then \begin{enumerate}
\item $\widehat{\mu}(u+k\tau+\ell,v+m\tau+n;\tau) = (-1)^{k+\ell+m+n}e^{\pi i(k-m)^2\tau+2\pi i(k-m)(u-v)}\widehat{\mu}(u,v;\tau)$, for $k,\ell,m,n\in\Z$.
\item $\widehat{\mu}\left(\frac{u}{c\tau+d},\frac{v}{c\tau+d};\frac{a\tau+b}{c\tau+d}\right) = \psi(\gamma)^{-3}(c\tau+d)^{\frac{1}{2}}e^{-\pi ic(u-v)^2/(c\tau+d)}\widehat{\mu}(u,v;\tau)$, for $\gamma = \smmat{a}{b}{c}{d}\in SL_2(\Z)$.
\end{enumerate}
\end{lemma}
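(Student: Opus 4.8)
The plan is to exploit the decomposition $\widehat{\mu}(u,v;\tau)=\mu(u,v;\tau)+\tfrac12 R(u-v;\tau)$ and to prove each transformation law by superposing the (essentially known) transformation behaviour of the holomorphic piece $\mu$, recorded in Lemma \ref{lem_mu}, with a matching transformation of the non-holomorphic correction $R$, read off directly from its defining series. The conceptual point throughout is that $\mu$ on its own is neither a Jacobi form (it acquires an anomalous term under $u\mapsto u+\tau$) nor cleanly modular (its inversion law Lemma \ref{lem_mu}(5) carries the extra Mordell term $\tfrac{1}{2i}h$); the term $R$ is engineered so that these two defects cancel. Thus every step reduces to checking that a specific defect of $\mu$ is the negative of the corresponding defect of $\tfrac12 R$.

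For part (1) I would first reduce to the four lattice generators $u\mapsto u+1$, $v\mapsto v+1$, $u\mapsto u+\tau$, $v\mapsto v+\tau$: both sides transform consistently under composition of shifts (the standard Jacobi cocycle, in which the argument $u-v$ of $R$ also shifts), so verifying the generators suffices. The two integer shifts are immediate: Lemma \ref{lem_mu}(1),(2) give the sign for $\mu$, while $R(w+1;\tau)=-R(w;\tau)$ follows from the series, each summand carrying $e^{-2\pi i\nu w}$ with $\nu\in\tfrac12+\Z$, so that the shift multiplies it by $e^{-2\pi i\nu}=-1$. For the two $\tau$-shifts I would compute the quasi-periodicity of $\mu$ from its $q$-series (producing a theta factor together with an anomalous correction) and, separately, the $\tau$-shift of $R(w;\tau)$ by reindexing $\nu\mapsto\nu\pm1$ and completing the square in the Gaussian exponent; the anomalous pieces cancel, leaving the stated factor. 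It is worth emphasising that $R$ depends only on $u-v$, so shifting $u$ by $k\tau$ and $v$ by $m\tau$ affects $R$ only through $(k-m)\tau$; combining the signs $(-1)^{\ell+n}$ and $(-1)^{k+m}$ with the Gaussian phase $e^{\pi i(k-m)^2\tau+2\pi i(k-m)(u-v)}$ reproduces exactly the right-hand side.

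For part (2) I would reduce to the generators $S=\smmat{0}{-1}{1}{0}$ and $T=\smmat{1}{1}{0}{1}$ of $\SL_2(\Z)$ together with a cocycle-consistency check. For $T$ the transformation is a scaling: Lemma \ref{lem_mu}(4) supplies $\mu(u,v;\tau+1)=e^{-\pi i/4}\mu(u,v;\tau)$, and a direct evaluation shows $R(w;\tau+1)=e^{-\pi i/4}R(w;\tau)$ as well, the phase coming from $e^{-\pi i\nu^2}=e^{-\pi i/4}$ for $\nu\in\tfrac12+\Z$ while the integral bound is unchanged since $\impart\tau$ is; as $\psi(T)^{-3}=e^{-\pi i/4}$ and $c=0$ kills the exponential, this matches the claim. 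The case $S$ is the heart of the argument: I would add Lemma \ref{lem_mu}(5) to $\tfrac12$ times the corresponding inversion law for $R$ and show that the Mordell term $\tfrac{1}{2i}h(u-v;\tau)$ coming from $\mu$ is cancelled by the modular defect of $R$, which is itself expressible through $h$; verifying this cancellation is where the properties of $h$ in Lemma \ref{htrans} enter. Finally I would check that the automorphy factor $\psi(\gamma)^{-3}(c\tau+d)^{1/2}e^{-\pi i c(u-v)^2/(c\tau+d)}$ obeys the cocycle relation $j(\gamma_1\gamma_2,\tau)=j(\gamma_1,\gamma_2\tau)\,j(\gamma_2,\tau)$, so that validity on $S$ and $T$ propagates to all of $\SL_2(\Z)$; here one must track the branch of the half-integral square root and use that $\psi$ is genuinely the $\eta$-multiplier, whose cocycle already encodes the required relation.

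The main obstacle is the inversion law for $R$ and the exact matching of its modular defect against the $\tfrac{1}{2i}h$ term produced by $\mu$ under $S$. This is the genuinely analytic step: it requires relating $R$ to a Gaussian/error-function integral, justifying the interchange of summation and integration, completing the square, and carefully tracking every phase and square-root branch at half-integral weight — precisely the computation in which the non-holomorphic completion earns its name. By comparison, the elliptic $\tau$-shifts and the cocycle-consistency check, while technical, amount to bookkeeping once the $S$-inversion of $R$ is in hand.
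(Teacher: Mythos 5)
First, a point of order: the paper never proves this lemma --- it is quoted verbatim from Zwegers' thesis (Prop.\ 1.11 of \cite{zwegers}) and used as a black box --- so the only proof to compare yours against is Zwegers' own. Your outline is essentially that proof: decompose $\widehat{\mu}=\mu+\tfrac12 R(u-v)$, pair each defective transformation of $\mu$ from Lemma \ref{lem_mu} with a matching transformation of $R$ so that the defects cancel, handle part (1) on the four lattice generators, and handle part (2) on the generators $T$ and $S$ together with a cocycle check for the $\eta$-multiplier $\psi^{-3}$. The approach is correct and is not a different route.

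The gap is that the two statements carrying all of the analytic content are asserted rather than proved. For part (1) you need the $\tau$-shift law of $R$ (Zwegers' Prop.\ 1.9(2)), whose anomalous term must cancel the anomaly in the quasi-periodicity of $\mu(u+\tau,v)$; you describe this as a reindexing plus completing the square, which is the right mechanism, but the cancellation is never exhibited. More seriously, for part (2) under $S$ you need the inversion law of $R$ --- that $\frac{1}{\sqrt{-i\tau}}e^{\pi i(u-v)^2/\tau}R\bigl(\tfrac{u-v}{\tau};-\tfrac1\tau\bigr)+R(u-v;\tau)$ is exactly the multiple of the Mordell integral $h(u-v;\tau)$ that kills the $\tfrac{1}{2i}h$ term in Lemma \ref{lem_mu}(5). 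You correctly flag this as ``the main obstacle'' but leave it as a description of a method; note also that Lemma \ref{htrans}, which you cite at this point, records only shift equations and evenness of $h$ and cannot by itself yield the inversion law for $R$. A concrete way to close this gap using only results already quoted in this paper: write $u-v=a\tau-b$ with $a,b$ real, express $R$ as a period integral of $g_{a+\frac12,b+\frac12}$ via Theorem \ref{thm_z116}(1), substitute $z\mapsto -1/z$ in that integral and apply Lemma \ref{lem_gabprop}(3),(5), then identify the two resulting pieces via Theorem \ref{thm_z116}(1) and (2) as an $R$-term and an $h$-term; this produces exactly the needed inversion law. Finally, a warning if you insist on computing ``directly from the defining series'': the definition of $R$ printed after \eqref{muhat} carries $e^{+\pi i v^2\tau-2\pi i vu}$, whereas Zwegers' $R$ has $e^{-\pi i v^2\tau-2\pi i vu}$; with the printed sign the reindexing $\nu\mapsto\nu\pm1$ does not align the Gaussian factor with the shifted error-function weight, and your part (1) computation will not close up. None of this makes the outline wrong, but as written it records the architecture of Zwegers' proof while deferring the steps that constitute it.
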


As discussed in work of Jennings-Shaffer and the fourth author \cite{JSS} (see (2.19) and following discussion), for $a\in\mathbb{Q}$,
\begin{align}\label{eqn:growth_check}
R(a\tau-b) &= p(\tau) + O(y^{-\frac{1}{2}}e^{-\pi a^2y - \epsilon y}),
\end{align}
as $y\rightarrow\infty$, where $\epsilon>0$ and $p(\tau)$ is some 
rational function of a fractional power of $q$. As such,
for $u_1,u_2,v_1,v_2\in\mathbb{Q}$, it follows that
\begin{equation}\label{hmfgrowth}
q^{-\frac{(u_1-v_1)^2}{2}}\widehat{\mu}\left( u_1\tau+u_2,v_1\tau+v_2  ;\tau\right)
\end{equation}
meets the prescribed growth conditions of a harmonic Maass form.

Following the notation of Folsom et. al. in \cite{FGKST}, we make the following definition.  Let $x_\tau$ be a complex function of $\tau$. Given $\gamma = \smmat{a}{b}{c}{d}\in\SL_2(\Z)$, define 
\begin{equation}\label{tildedef}
\tilde{x}_{\gamma,\tau} := x_{\gamma\tau}\cdot (c\tau +d).
\end{equation}

The following lemma follows directly from Lemma \ref{zwemutrans}, and is given in Folsom et. al \cite{FGKST}. 

\begin{lemma}\label{muhattrans}\cite[Lemma 3.1]{FGKST}
Let $\gamma = \smmat{a}{b}{c}{d}\in SL_2(\Z)$, $\tau\in\H$, and $u_\tau,v_\tau\in \C\setminus(\Z\tau+\Z)$. Suppose $\tilde{u}_{\gamma,\tau} = u_\tau + k_\gamma\cdot\tau+\ell_\gamma$, and $\tilde{v}_{\gamma,\tau} = v_\tau + r_\gamma\cdot\tau+s_\gamma$, for some integers $k_\gamma,\ell_\gamma,r_\gamma,s_\gamma$. Then, we have that \begin{multline*}
\widehat{\mu}(u_{\gamma\tau},v_{\gamma\tau};\gamma\tau) = \widehat{\mu}\left(\frac{\tilde{u}_{\gamma,\tau}}{c\tau+d},\frac{\tilde{v}_{\gamma,\tau}}{c\tau+d};\gamma\tau\right) 
=\psi(\gamma)^{-3}(-1)^{k_\gamma+\ell_\gamma+r_\gamma+s_\gamma}(c\tau+d)^{\frac{1}{2}}q^{\frac{(k_\gamma-r_\gamma)^2}{2}}\\
\cdot e\left(\frac{-c(\tilde{u}_{\gamma,\tau} - \tilde{v}_{\gamma,\tau})^2}{2(c\tau+d)}+(k_\gamma-r_\gamma)(u_\tau-v_\tau)\right) \widehat{\mu}(u_\tau,v_\tau;\tau).
\end{multline*}
\end{lemma}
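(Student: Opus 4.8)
The plan is to derive the identity as a direct, two-step application of Zwegers' transformation formulas in Lemma \ref{zwemutrans}, preceded by an entirely definitional observation. The first equality requires no computation: by the definition of the tilde-notation in \eqref{tildedef} we have $\tilde{u}_{\gamma,\tau} = u_{\gamma\tau}(c\tau+d)$ and $\tilde{v}_{\gamma,\tau} = v_{\gamma\tau}(c\tau+d)$, so that $u_{\gamma\tau} = \tilde{u}_{\gamma,\tau}/(c\tau+d)$ and $v_{\gamma\tau} = \tilde{v}_{\gamma,\tau}/(c\tau+d)$. Substituting these expressions into the arguments of $\widehat{\mu}$ immediately yields $\widehat{\mu}(u_{\gamma\tau},v_{\gamma\tau};\gamma\tau) = \widehat{\mu}\left(\tilde{u}_{\gamma,\tau}/(c\tau+d),\tilde{v}_{\gamma,\tau}/(c\tau+d);\gamma\tau\right)$.

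Next I would apply the modular transformation in Lemma \ref{zwemutrans}(2), taking $u = \tilde{u}_{\gamma,\tau}$ and $v = \tilde{v}_{\gamma,\tau}$. Since $\gamma\tau = (a\tau+b)/(c\tau+d)$, this rewrites the right-hand side above as $\psi(\gamma)^{-3}(c\tau+d)^{1/2}e^{-\pi i c(\tilde{u}_{\gamma,\tau} - \tilde{v}_{\gamma,\tau})^2/(c\tau+d)}\,\widehat{\mu}(\tilde{u}_{\gamma,\tau}, \tilde{v}_{\gamma,\tau}; \tau)$, thereby producing the automorphy factor $\psi(\gamma)^{-3}(c\tau+d)^{1/2}$ and the quadratic exponential. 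I would then invoke the elliptic transformation in Lemma \ref{zwemutrans}(1) to strip off the lattice shifts: writing $\tilde{u}_{\gamma,\tau} = u_\tau + k_\gamma\tau + \ell_\gamma$ and $\tilde{v}_{\gamma,\tau} = v_\tau + r_\gamma\tau + s_\gamma$ and applying part (1) with $(k,\ell,m,n) = (k_\gamma, \ell_\gamma, r_\gamma, s_\gamma)$ gives the sign $(-1)^{k_\gamma + \ell_\gamma + r_\gamma + s_\gamma}$ together with $e^{\pi i(k_\gamma - r_\gamma)^2\tau + 2\pi i (k_\gamma - r_\gamma)(u_\tau - v_\tau)}\,\widehat{\mu}(u_\tau, v_\tau;\tau)$.

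The remaining task, and the only point requiring genuine care, is the bookkeeping of the exponential factors rather than any conceptual obstacle. I would rewrite $e^{\pi i(k_\gamma - r_\gamma)^2\tau}$ as $q^{(k_\gamma - r_\gamma)^2/2}$ using $q = e^{2\pi i\tau}$, convert both $e^{-\pi ic(\tilde{u}_{\gamma,\tau} - \tilde{v}_{\gamma,\tau})^2/(c\tau+d)}$ and $e^{2\pi i(k_\gamma - r_\gamma)(u_\tau - v_\tau)}$ into the $e(\cdot)$ notation, and then collect the two additive exponents into the single $e\left(\frac{-c(\tilde{u}_{\gamma,\tau} - \tilde{v}_{\gamma,\tau})^2}{2(c\tau+d)}+(k_\gamma-r_\gamma)(u_\tau-v_\tau)\right)$ appearing in the statement. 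One must check that no shift-dependent term is silently dropped and that the argument $\tilde{u}_{\gamma,\tau} - \tilde{v}_{\gamma,\tau}$ inside the quadratic exponential is retained in the tilde-variables exactly as displayed, since it is the tilde-variables (not $u_\tau,v_\tau$) that enter the $c\tau+d$ denominator. After this consolidation the stated identity follows, so the principal effort is organizational: applying the two parts of Lemma \ref{zwemutrans} in the correct order and matching exponential conventions, with no delicate estimate or new idea needed.
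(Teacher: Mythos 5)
Your proposal is correct and is exactly the argument the paper intends: the paper gives no separate proof, stating only that the lemma ``follows directly from Lemma \ref{zwemutrans},'' and your two-step application of Lemma \ref{zwemutrans}(2) followed by Lemma \ref{zwemutrans}(1), after the definitional identification $u_{\gamma\tau}=\tilde{u}_{\gamma,\tau}/(c\tau+d)$, reproduces precisely the stated factors $\psi(\gamma)^{-3}(-1)^{k_\gamma+\ell_\gamma+r_\gamma+s_\gamma}(c\tau+d)^{1/2}q^{(k_\gamma-r_\gamma)^2/2}$ and the combined exponential. The bookkeeping you describe ($e^{\pi i(k_\gamma-r_\gamma)^2\tau}=q^{(k_\gamma-r_\gamma)^2/2}$ and the conversion to the $e(\cdot)$ notation) checks out.
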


In order to use Lemma \ref{muhattrans} for our functions $V_\alpha(\tau)$, we use the following lemma and its corollary, which follow directly from the definition of $\tilde{x}_{\gamma,\tau}$ and are also stated in Folsom et. al \cite{FGKST}.

\begin{lemma}\label{mockmodcong}\cite[Lemma 3.3]{FGKST}
Let $x_\tau\in\C\setminus(\Z\tau +\Z)$ be of the form \[x_\tau = \frac{\alpha\tau +\beta}{N}\] where $N\in\N$, $N\nmid \alpha$, and $N\nmid\beta$. For fixed $\gamma = \smmat{a}{b}{c}{d} \in SL_2(\Z)$, we have that $\tilde{x}_{\gamma,\tau} - x_\tau \in \Z\tau + \Z$ if and only if the following congruences hold: \[\alpha a + \beta c \equiv \alpha\pmod{N}\]\[\alpha b + \beta d\equiv \beta\pmod{N}.\]
\end{lemma}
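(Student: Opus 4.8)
The plan is to prove the statement by a single direct computation that turns the condition $\tilde{x}_{\gamma,\tau} - x_\tau \in \Z\tau + \Z$ into an explicit linear expression in $\tau$, after which the two congruences fall out of the uniqueness of lattice coordinates. First I would unwind the definition \eqref{tildedef}. Writing $\gamma\tau = \frac{a\tau+b}{c\tau+d}$ and substituting into $x_{\gamma\tau} = \frac{\alpha(\gamma\tau)+\beta}{N}$ gives
\[
x_{\gamma\tau} = \frac{\alpha(a\tau+b)+\beta(c\tau+d)}{N(c\tau+d)},
\]
so that multiplying by $c\tau+d$ collapses the denominator and yields the clean form
\[
\tilde{x}_{\gamma,\tau} = x_{\gamma\tau}\,(c\tau+d) = \frac{(\alpha a+\beta c)\tau + (\alpha b+\beta d)}{N}.
\]

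Next I would subtract $x_\tau = \frac{\alpha\tau+\beta}{N}$ to obtain
\[
\tilde{x}_{\gamma,\tau} - x_\tau = \frac{(\alpha a+\beta c-\alpha)\tau + (\alpha b+\beta d-\beta)}{N},
\]
which is a fixed rational-coefficient combination $p\tau + r$ with $p = \tfrac{\alpha a+\beta c-\alpha}{N}$ and $r = \tfrac{\alpha b+\beta d-\beta}{N}$. Since $\tau \in \H$ has $\impart(\tau) > 0$, the set $\{1,\tau\}$ is linearly independent over $\R$, hence over $\Q$; therefore a number of the form $p\tau + r$ with $p,r \in \Q$ lies in the lattice $\Z\tau + \Z$ \emph{if and only if} $p \in \Z$ and $r \in \Z$ separately. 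Applying this to $\tilde{x}_{\gamma,\tau} - x_\tau$ shows membership in $\Z\tau+\Z$ is equivalent to $N \mid (\alpha a + \beta c - \alpha)$ and $N \mid (\alpha b + \beta d - \beta)$, which are exactly the two stated congruences. This handles both directions of the equivalence at once.

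There is no serious obstacle here; the only point that requires a word of justification is the last one, namely that membership in the rank-two lattice $\Z\tau+\Z$ forces both rational coordinates to be integers rather than merely their combination to be a lattice point, and this is precisely where the hypothesis $\tau \in \H$ (guaranteeing $\R$-linear independence of $1$ and $\tau$) is used. I would also remark that the side hypotheses $N \nmid \alpha$ and $N \nmid \beta$ play no role in the equivalence itself; they serve only to ensure $x_\tau \in \C\setminus(\Z\tau+\Z)$, so that the function $\mu$ and the quantity $\tilde{x}_{\gamma,\tau}$ are well defined in the intended setting.
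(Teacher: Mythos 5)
Your proof is correct, and it is exactly the direct computation the paper has in mind: this lemma is quoted from \cite[Lemma 3.3]{FGKST} without proof, with the remark that it ``follows directly from the definition of $\tilde{x}_{\gamma,\tau}$,'' which is precisely what you carry out (expand $\tilde{x}_{\gamma,\tau}=\frac{(\alpha a+\beta c)\tau+(\alpha b+\beta d)}{N}$, subtract $x_\tau$, and use the $\R$-linear independence of $1$ and $\tau$ to split membership in $\Z\tau+\Z$ into the two congruences). Your closing observation that $N\nmid\alpha$ and $N\nmid\beta$ are not needed for the equivalence itself, only to keep $x_\tau$ outside $\Z\tau+\Z$, is also accurate.
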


The following corollary considers the case when $N$ divides one of $\alpha$ and $\beta$.

\begin{corollary}\label{congcoro}\cite[Corollary 3.4]{FGKST}
Let $x_\tau\in\C\setminus(\Z\tau +\Z)$ be of the form \[x_\tau = \frac{\alpha\tau +\beta}{N}\] where $N\in\N$. When $N|\alpha$ and $\beta$ is relatively prime to $N$, then $\tilde{x}_{\gamma,\tau} - x_\tau \in \Z\tau + \Z$ if and only if $\gamma\in \Gamma_1(N)$. Similarly, if $N|\beta$ and $\alpha$ is relatively prime to $N$, then $\tilde{x}_{\gamma,\tau} - x_\tau \in \Z\tau + \Z$ if and only if $\gamma\in \Gamma^1(N)$.
\end{corollary}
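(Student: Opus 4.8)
The plan is to note first that Lemma \ref{mockmodcong} cannot be applied directly here, since its standing hypotheses $N\nmid\alpha$ and $N\nmid\beta$ fail in both cases of the corollary. Instead I would rerun the short computation underlying that lemma, which in fact produces the same pair of congruences with no divisibility assumption on $\alpha$ or $\beta$. Writing $\gamma=\smmat{a}{b}{c}{d}$ and using (\ref{tildedef}) together with $\gamma\tau=\frac{a\tau+b}{c\tau+d}$, one computes
\[
\tilde{x}_{\gamma,\tau} = x_{\gamma\tau}(c\tau+d) = \frac{\alpha(a\tau+b)+\beta(c\tau+d)}{N} = \frac{(\alpha a+\beta c)\tau+(\alpha b+\beta d)}{N},
\]
so that
\[
\tilde{x}_{\gamma,\tau}-x_\tau = \frac{(\alpha a+\beta c-\alpha)\tau+(\alpha b+\beta d-\beta)}{N}.
\]
Because $\tau\in\H$ has positive imaginary part, $1$ and $\tau$ are linearly independent over $\mathbb{Q}$, so a number $p\tau+q$ with $p,q\in\mathbb{Q}$ lies in $\Z\tau+\Z$ if and only if $p,q\in\Z$. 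Applying this to the displayed difference gives $\tilde{x}_{\gamma,\tau}-x_\tau\in\Z\tau+\Z$ precisely when $\alpha a+\beta c\equiv\alpha\pmod N$ and $\alpha b+\beta d\equiv\beta\pmod N$.

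Next I would specialize these two congruences in each case. When $N\mid\alpha$ and $\gcd(\beta,N)=1$, the terms $\alpha a$ and $\alpha b$ and the right-hand $\alpha$ all vanish modulo $N$, so the congruences collapse to $\beta c\equiv 0$ and $\beta d\equiv\beta\pmod N$; cancelling the unit $\beta$ leaves $c\equiv 0$ and $d\equiv 1\pmod N$. Symmetrically, when $N\mid\beta$ and $\gcd(\alpha,N)=1$, they collapse to $\alpha a\equiv\alpha$ and $\alpha b\equiv 0\pmod N$, and cancelling the unit $\alpha$ leaves $a\equiv 1$ and $b\equiv 0\pmod N$.

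The one point that needs care — and what I expect to be the crux, though it is short — is recovering the remaining diagonal congruence built into the definitions of $\Gamma_1(N)$ and $\Gamma^1(N)$ but not produced directly above. Here I would invoke $\det\gamma=ad-bc=1$. In the first case, reducing $ad-bc=1$ modulo $N$ with $c\equiv 0$ and $d\equiv 1$ yields $a\equiv ad-bc=1\pmod N$, so together with $c\equiv 0$ and $d\equiv 1$ we obtain $\gamma\in\Gamma_1(N)$. In the second case, reducing $ad-bc=1$ with $a\equiv 1$ and $b\equiv 0$ yields $d\equiv ad-bc=1\pmod N$, so with $b\equiv 0$ we obtain $\gamma\in\Gamma^1(N)$. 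The converse directions are immediate: membership in $\Gamma_1(N)$ (resp. $\Gamma^1(N)$) supplies exactly the congruences $c\equiv 0,\ d\equiv 1$ (resp. $a\equiv 1,\ b\equiv 0$) needed to verify the two congruences from the first paragraph, whence $\tilde{x}_{\gamma,\tau}-x_\tau\in\Z\tau+\Z$. This establishes both equivalences and completes the proof.
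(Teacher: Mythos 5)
Your proof is correct and follows essentially the same route the paper (citing FGKST) has in mind: the corollary is presented there as following directly from the definition of $\tilde{x}_{\gamma,\tau}$, i.e., from the same congruence computation underlying Lemma \ref{mockmodcong}, specialized by cancelling the unit $\beta$ (resp. $\alpha$) and completed by using $\det\gamma = ad-bc=1$ to recover the remaining diagonal congruence — the same determinant trick the paper itself uses in the proof of Lemma \ref{mockgpthm}. Your remark that the hypotheses $N\nmid\alpha$, $N\nmid\beta$ of Lemma \ref{mockmodcong} are never used in that computation is a careful and valid clarification, but it does not change the substance of the argument.
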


Here we note that we use the notation $\G^0(N)$ and $\G^1(N)$ to refer to the  corresponding ``lower triangular" versions of the congruence subgroups $\G_0(N)$ and $\G_1(N)$, respectively.

An additional lemma from Folsom et. al. \cite{FGKST} that we use is given below.  Let $\tau\in \H$, $a,b\in \R$, and $u,v \in \C \backslash (\Z\tau + \Z)$.  Then define  
\begin{equation}\label{Mabdef}\widehat{M}_{a,b}(\tau) := -\sqrt{2} e^{2\pi ia\left( b+\frac{1}{2}\right)} q^{-\frac{a^2}{2}}\widehat{\mu}(u,v;\tau).
\end{equation}

\begin{lemma}\label{LaplaceLem}\cite[Prop. 2.9]{FGKST}
Let $\tau\in\H$, and $u,v\in\C\setminus(\Z\tau +\Z)$. If $u-v = a\tau - b$ for some $a,b\in\R$, then the function $\widehat{M}_{a,b}(\tau)$ satisfies \[\Delta_{\frac{1}{2}}(\widehat{M}_{a,b}(\tau)) = 0.\]
\end{lemma}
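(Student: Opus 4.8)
The plan is to use the splitting $\widehat{\mu} = \mu + \tfrac12 R$ from \eqref{muhat} and to treat the two resulting pieces of $\widehat{M}_{a,b}$ separately, exploiting that $\Delta_{1/2}$ annihilates holomorphic functions together with Zwegers' description of the non-holomorphic completion $R$. Throughout I view $u = u_1\tau + u_2$ and $v = v_1\tau + v_2$ as the affine functions of $\tau$ arising in our setting, so that $u - v = a\tau - b$ with $a = u_1 - v_1$ and $-b = u_2 - v_2$; thus $q^{-a^2/2}\widehat{\mu}(u,v;\tau)$ is exactly the growth object in \eqref{hmfgrowth}. Since the scalar $-\sqrt2\,e^{2\pi i a(b+1/2)}$ is independent of $\tau$ and $\Delta_{1/2}$ is linear, it suffices to show $\Delta_{1/2}\bigl(q^{-a^2/2}\widehat{\mu}(u,v;\tau)\bigr) = 0$, and for this I split off the $\mu$ and $R$ contributions.

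For the holomorphic piece I would observe that $\mu(u,v;\tau)$ is holomorphic in each of its three arguments away from its poles, and that $u,v$ are holomorphic (indeed affine) in $\tau$; hence $\tau\mapsto\mu(u_1\tau+u_2,v_1\tau+v_2;\tau)$ is holomorphic, as is $q^{-a^2/2} = e^{-\pi i a^2\tau}$, so their product is holomorphic in $\tau$. A direct inspection of
\[
\Delta_{1/2} = -y^2\left(\tfrac{\partial^2}{\partial x^2} + \tfrac{\partial^2}{\partial y^2}\right) + \tfrac{i}{2}y\left(\tfrac{\partial}{\partial x} + i\tfrac{\partial}{\partial y}\right)
\]
shows that any holomorphic function lies in its kernel: holomorphy gives $(\partial_x + i\partial_y)f = 2\partial_{\bar\tau}f = 0$, which kills the second term, while $(\partial_x^2 + \partial_y^2)f = 4\partial_\tau\partial_{\bar\tau}f = 0$ kills the first. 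Hence the $\mu$-term contributes nothing to $\Delta_{1/2}\widehat{M}_{a,b}$.

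The substantive work is the $R$-piece, namely showing $\Delta_{1/2}\bigl(q^{-a^2/2}R(a\tau - b;\tau)\bigr) = 0$. Here I would invoke the standard factorization $\Delta_{1/2} = -\xi_{3/2}\circ\xi_{1/2}$, where $\xi_k(f) = 2iy^k\,\overline{\partial_{\bar\tau}f}$ (Bruinier--Funke \cite{BF}), so that it is enough to check that $\xi_{1/2}\bigl(q^{-a^2/2}R(a\tau-b;\tau)\bigr)$ is holomorphic. Computing $\partial_{\bar\tau}$ of $R(a\tau - b;\tau)$ via Zwegers' identities \cite{zwegers}, the non-holomorphic dependence enters only through the incomplete Gaussian integrals in the definition of $R$, whose arguments involve $\impart(a\tau - b) = ay$; differentiating these and combining with the explicit factor $q^{-a^2/2}$ produces, up to a nonzero constant, $y^{-1/2}$ times the conjugate of the holomorphic weight-$3/2$ unary theta series $g_{a,b}$. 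Thus $\xi_{1/2}$ of the $R$-piece is a constant multiple of $g_{a,b}$, which is holomorphic, and applying $\xi_{3/2}$ then returns $0$. Adding the two contributions yields $\Delta_{1/2}\widehat{M}_{a,b} = 0$.

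The main obstacle is precisely this last computation: one must track the chain rule for $\partial_{\bar\tau}R(a\tau-b;\tau)$ with care, since $\tau$ enters both explicitly and through the argument $a\tau-b$ of $R$, and then verify that the particular normalization $q^{-a^2/2}$ is exactly what cancels the unwanted cross terms so that the shadow collapses to the pure theta function $g_{a,b}$. All of the transformation bookkeeping is subsumed in Zwegers' $\partial_{\bar\tau}R$ identity; the remaining effort is matching the constant and the power of $q$ against our normalization of $\widehat{M}_{a,b}$.
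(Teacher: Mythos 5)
Your argument is correct and is essentially the proof the paper defers to: the paper gives no proof of this lemma, citing \cite[Prop.~2.9]{FGKST}, and that proof proceeds exactly as you do, splitting $\widehat{\mu}=\mu+\tfrac12 R$, killing the holomorphic $\mu$-piece, and using Zwegers' formula for $\partial_{\bar\tau}R(a\tau-b;\tau)$ to show $\xi_{1/2}$ of the $R$-piece is holomorphic (so $\Delta_{1/2}=-\xi_{3/2}\circ\xi_{1/2}$ annihilates it). The only cosmetic imprecision is that the resulting shadow is a constant multiple of $q^{a^2/2}g_{a+\frac12,b+\frac12}$ rather than $g_{a,b}$ itself, which does not affect the conclusion.
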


\noindent We also will require the following definition and several results of Zwegers \cite{zwegers}.

\begin{definition}
Let $a,b \in \R$ and $\tau \in \mathbb{H}$; then
$$g_{a,b}(\tau) := \sum_{v \in a+ \Z} ve^{\pi i v^2 \tau+2\pi i vb}.$$
\end{definition}

The following lemma gives transformation formulas for the functions $g_{a,b}(\tau)$, and show in particular that $g_{a,b}(\tau)$ is a modular form of weight $3/2$ when $a,b$ are rational.

\begin{lemma}[Zwegers \cite{zwegers}]\label{lem_gabprop} The function $g_{a,b}$ satisfies the following:
\begin{enumerate}
\item[(1)] $g_{a+1,b}(\tau)=g_{a,b}(\tau)$,
\item[(2)] $g_{a,b+1}(\tau)=e^{2\pi ia}g_{a,b}(\tau)$,
\item[(3)] $g_{-a,-b}(\tau)=-g_{a,b}(\tau)$,
\item[(4)] $g_{a,b}(\tau +1)=e^{-\pi ia(a+1)}g_{a,a+b+\frac12}(\tau)$,
\item[(5)] $g_{a,b}(-\frac{1}{\tau})=ie^{2\pi iab}(-i\tau)^{3/2}g_{b,-a}(\tau)$.
\end{enumerate}
\end{lemma}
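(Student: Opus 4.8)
The plan is to dispatch properties (1)--(4) as elementary manipulations of the defining series $g_{a,b}(\tau) = \sum_{v\in a+\Z} v\,e^{\pi i v^2\tau + 2\pi i vb}$, and to reserve the real work for the modular $S$-transformation (5). I expect (5) to be the main obstacle, since it requires Poisson summation applied to a Gaussian carrying an extra linear factor, followed by careful branch-cut bookkeeping to produce exactly the factor $i(-i\tau)^{3/2}$.

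For (1), I would simply note that the index set $(a+1)+\Z$ equals $a+\Z$, so the two series are identical. For (2), writing each $v\in a+\Z$ as $v=a+n$, the shift $b\mapsto b+1$ multiplies the $v$-th summand by $e^{2\pi i v}=e^{2\pi i a}$, which is independent of $n$ and factors out of the sum. For (3), I would substitute $v\mapsto -v$, which carries the index set $-a+\Z$ bijectively onto $a+\Z$, sends the prefactor $v$ to $-v$, and, using $(-v)^2=v^2$ and $(-v)(-b)=vb$, turns the $v$-th summand of $g_{-a,-b}$ into $-v\,e^{\pi i v^2\tau + 2\pi i vb}$; summing gives $-g_{a,b}(\tau)$.

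Property (4) is the first requiring a genuine (if short) computation. Replacing $\tau$ by $\tau+1$ multiplies each summand by $e^{\pi i v^2}$. Writing $v=a+n$ and using $e^{\pi i n^2}=e^{\pi i n}$ (valid since $n^2\equiv n\pmod 2$), I would reduce this quadratic Gauss factor to a linear exponential in $v$, obtaining $e^{\pi i v^2}=e^{-\pi i a(a+1)}\,e^{2\pi i v(a+\frac12)}$ for all $v\in a+\Z$. Absorbing the $v$-linear piece into the $b$-slot then converts $g_{a,b}(\tau+1)$ into $e^{-\pi i a(a+1)}\,g_{a,a+b+\frac12}(\tau)$, as claimed.

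The main obstacle is (5). Here I would apply Poisson summation in the form $\sum_{n}\phi(a+n)=\sum_m \hat\phi(m)\,e^{2\pi i ma}$ to $\phi(x)=x\,e^{\pi i\sigma x^2 + 2\pi i bx}$. The transform $\hat\phi$ is a Gaussian integral carrying one extra power of $x$, which I would evaluate by differentiating the standard identity $\int_\R e^{\pi i\sigma x^2 + 2\pi i cx}\,dx=(-i\sigma)^{-1/2}e^{-\pi i c^2/\sigma}$ in $c$, giving $\hat\phi(\xi)=\frac{-(b-\xi)}{\sigma\sqrt{-i\sigma}}e^{-\pi i(b-\xi)^2/\sigma}$. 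Setting $\sigma=-1/\tau$ and reindexing $w=m-b$ would recognize the resulting series as $e^{2\pi i ab}\,g_{-b,a}(\tau)$, after which property (3) rewrites $g_{-b,a}=-g_{b,-a}$. The delicate step, demanding the most care, is simplifying the scalar prefactor $\frac{-1}{(-1/\tau)\sqrt{-i(-1/\tau)}}$ to exactly $i(-i\tau)^{3/2}$: for $\tau\in\H$ both $-i\tau$ and $i/\tau=1/(-i\tau)$ lie in the right half-plane, so the principal branches satisfy $\sqrt{i/\tau}=1/\sqrt{-i\tau}$, whence the prefactor equals $\tau\sqrt{-i\tau}=i(-i\tau)\sqrt{-i\tau}=i(-i\tau)^{3/2}$ with no spurious sign. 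I would confirm this branch choice by testing $\tau=i$, where $-1/\tau=\tau$ forces the prefactor to be $i$.
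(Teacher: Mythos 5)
Your argument is correct, but there is nothing in the paper to compare it against: the paper states this lemma as a quotation from Zwegers \cite{zwegers} and gives no proof at all, deferring entirely to his thesis. Your write-up supplies, in substance, the standard argument found there. Parts (1)--(3) are pure reindexings of the defining series; your part (4) computation, using $e^{\pi i n^2}=e^{\pi i n}$ for $n\in\Z$ to linearize the Gauss factor into $e^{\pi i v^2}=e^{-\pi i a(a+1)}e^{2\pi i v\left(a+\frac12\right)}$ for $v\in a+\Z$, is exactly right; and part (5) via Poisson summation applied to $\phi(x)=x\,e^{\pi i\sigma x^2+2\pi i bx}$ with $\sigma=-1/\tau$, computing $\widehat{\phi}$ by differentiating the Gaussian identity in the linear parameter and then invoking part (3) to pass from $g_{-b,a}$ to $-g_{b,-a}$, is the expected route for a weight-$3/2$ unary theta series. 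Your branch bookkeeping is also sound: for $\tau\in\mathbb{H}$ both $-i\tau$ and $i/\tau$ lie in the open right half-plane, where the principal square root satisfies $\sqrt{1/z}=1/\sqrt{z}$, so the scalar prefactor is $\tau\sqrt{-i\tau}=i(-i\tau)\sqrt{-i\tau}=i(-i\tau)^{3/2}$ with no hidden sign, as your check at $\tau=i$ confirms. Two points you should make explicit in a final version: Poisson summation is legitimate because $\phi$ is a Schwartz function (since $\operatorname{Im}\sigma>0$), and the differentiation of $\int_{\R}e^{\pi i\sigma x^2+2\pi i cx}\,dx=(-i\sigma)^{-1/2}e^{-\pi i c^2/\sigma}$ with respect to $c$ is justified since both sides are entire in $c$ and the differentiated integrand is dominated by a fixed Gaussian for $c$ in compact sets.
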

The functions $g_{a,b}(\tau)$ are related to $R$ and $h$ in the following theorem.

\begin{theorem}[Zwegers, Thm. 1.16 of \cite{zwegers}]\label{thm_z116}
For $\tau \in \mathbb{H}$, we have the following two results.

\begin{enumerate}
\item
When $a\in (-\frac12, \frac12)$ and $b \in \R$,
\[
\int_{-\overline{\tau}}^{i\infty} \frac{g_{a+\frac{1}{2},b+\frac{1}{2}}(z)}{\sqrt{-i(z+\tau)}}dz = -e^{2\pi i a(b+\frac{1}{2})}q^{-\frac{a^2}{2}} R(a\tau-b;\tau).
\]
\item
Also, when $a,b \in (-\frac12, \frac12)$,
\[
\int_{0}^{i\infty} \frac{g_{a+\frac{1}{2},b+\frac{1}{2}}(z)}{\sqrt{-i(z+\tau)}}dz =-e^{2\pi i a(b+\frac{1}{2})}q^{-\frac{a^2}{2}} h(a\tau-b;\tau).
\]
\end{enumerate}
\end{theorem}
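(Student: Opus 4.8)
The plan is to prove both identities by expanding the weight-$3/2$ theta series $g_{a+\frac12,b+\frac12}$ in its defining series (from the definition of $g_{a,b}$), integrating term by term, and recognizing the resulting object as the non-holomorphic $R$ (part (1)) or the Mordell integral $h$ (part (2)). Writing a summation index $w=\nu+a$ with $\nu\in\frac12+\Z$, so that $g_{a+\frac12,b+\frac12}(z)=\sum_{\nu}(\nu+a)\,e^{\pi i(\nu+a)^2 z+2\pi i(\nu+a)(b+\frac12)}$, the decay of each term as $\impart z\to\infty$ supplies a dominating function and justifies interchanging the sum with the integral. The two parts then differ only through the lower endpoint of integration, and heuristically this is exactly what governs whether the output is $\bar\tau$-dependent (hence $R$) or holomorphic (hence $h$).

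For part (1) the cleanest route is a direct evaluation of the inner integral $\int_{-\bar\tau}^{i\infty}\frac{e^{\pi iw^2 z}}{\sqrt{-i(z+\tau)}}\,dz$. Parametrizing the vertical contour $z=-\bar\tau+is$ one has $z+\tau=i(2\impart\tau+s)$, so $\sqrt{-i(z+\tau)}$ is a positive real square root, and the substitution $t=|w|\sqrt{r}$ reduces the integral to the complementary error integral $\int_{|w|\sqrt{2\impart\tau}}^{\infty}e^{-\pi t^2}\,dt$, up to an explicit Gaussian factor $e^{-\pi i w^2\tau}$ and a factor $\frac{1}{|w|}$. The factor $w$ from the theta coefficient cancels the $\frac{1}{|w|}$, and multiplying through by $\sgn(w)$ converts $2\int_{|w|\sqrt{2\impart\tau}}^{\infty}e^{-\pi t^2}dt$ into $\sgn(w)-2\int_0^{w\sqrt{2\impart\tau}}e^{-\pi t^2}\,dt$, which is precisely the sign-error-function weight appearing in the definition of $R$. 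Reindexing back to $\nu$ and collecting the Gaussian and root-of-unity factors reassembles the series defining $R(a\tau-b;\tau)$ times the stated prefactor $-e^{2\pi ia(b+\frac12)}q^{-a^2/2}$; here the hypothesis $a\in(-\frac12,\frac12)$ is used so that $\sgn(w)=\sgn(\nu)$ for every term, making the sign terms match term by term.

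For part (2) the inner integral from $0$ does not collapse to a single error function, so I would instead use the Gaussian representation $\frac{1}{\sqrt{-i(z+\tau)}}=\int_{\R}e^{\pi i(z+\tau)\xi^2}\,d\xi$, valid because $\impart(z+\tau)>0$. Substituting this, applying Fubini, and performing the now-elementary $z$-integral $\int_0^{i\infty}e^{\pi iz(w^2+\xi^2)}\,dz=\frac{i}{\pi(w^2+\xi^2)}$ reduces the inner integral to $\frac{i}{\pi}\int_{\R}\frac{e^{\pi i\tau\xi^2}}{w^2+\xi^2}\,d\xi$; the key point is that the lower endpoint $0$ contributes only a constant (not a $\bar\tau$-dependent) boundary term, which is exactly why the holomorphic $h$ rather than $R$ appears. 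Interchanging the $w$-sum with the $\xi$-integral, I would evaluate the lattice sum $\sum_{w\in a+\frac12+\Z}\frac{w\,e^{2\pi iw(b+\frac12)}}{w^2+\xi^2}$ by partial fractions together with the Mittag--Leffler (cotangent) expansion; the half-integer shift and the phase $e^{2\pi iw(b+\frac12)}$ produce exactly a factor $\frac{1}{\cosh\pi\xi}$. Completing the square in the Gaussian (i.e.\ shifting $\xi\mapsto\xi-ia$) then turns $e^{\pi i\tau\xi^2}$ into $e^{\pi i\tau\xi^2-2\pi(a\tau-b)\xi}$ up to a factor $q^{a^2/2}$, producing the integrand of the Mordell integral \eqref{Mordell} and the constant $-e^{2\pi ia(b+\frac12)}q^{-a^2/2}$. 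The hypotheses $a,b\in(-\frac12,\frac12)$ enter precisely here: the contour shift by $-ia$ must stay inside the strip $|\impart\xi|<\frac12$ so as not to cross the poles of $\frac{1}{\cosh\pi\xi}$ at $\xi\in i(\frac12+\Z)$. (Alternatively, part (2) can be deduced from part (1) by splitting the contour at $-\bar\tau$ and using the $S$-transformation of $g_{a,b}$ from Lemma \ref{lem_gabprop}(5) with the relation between $\mu$ and $h$ in Lemma \ref{lem_mu}(5), the completion \eqref{muhat}, and Lemma \ref{zwemutrans}, which together express $h$ through $R(\,\cdot\,;\tau)$ and $R(\,\cdot\,;-1/\tau)$.)

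The main obstacle is bookkeeping rather than conceptual. One must rigorously justify the two interchanges---dominated convergence for the term-by-term integration in part (1) and Fubini plus sum--integral interchange for the Gaussian representation in part (2)---and, more delicately, track every Gaussian and root-of-unity prefactor so that the reindexed series lands on $R$ and the lattice sum lands on the $\frac{1}{\cosh\pi\xi}$ of $h$ with the exact stated constants. The closed-form identity $\sum_{w}\frac{w\,e^{2\pi iwc}}{w^2+\xi^2}\propto \frac{e^{(\cdots)\xi}}{\cosh\pi\xi}$ and the precise alignment of the sign and pole conditions under the standing hypotheses on $a,b$ are where the stated parameter ranges are indispensable.
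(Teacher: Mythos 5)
The paper itself contains no proof of Theorem \ref{thm_z116}: it is quoted verbatim from Zwegers' thesis, so your attempt can only be compared with Zwegers' original argument. Your part (1) is exactly that argument, and it is correct. On the contour $z=-\overline{\tau}+is$ one has $-i(z+\tau)=2\impart(\tau)+s>0$, the sum--integral interchange is honest dominated convergence (each term is bounded by $|w|e^{-\pi w^2(\impart(\tau)+s)}(2\impart(\tau)+s)^{-1/2}$, which is summable), and the substitution $t=|w|\sqrt{2\impart(\tau)+s}$ turns the term with index $w=\nu+a$ into $2i\,\sgn(w)\,e^{-\pi iw^2\tau+2\pi iw(b+\frac12)}\int_{|w|\sqrt{2\impart(\tau)}}^{\infty}e^{-\pi t^2}\,dt$; since $|a|<\frac12$ gives $\sgn(\nu+a)=\sgn(\nu)$, the reindexed series reassembles into $-e^{2\pi ia(b+\frac12)}q^{-a^2/2}R(a\tau-b;\tau)$. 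One warning when you check constants: the series you land on has exponent $e^{-\pi i\nu^2\tau-2\pi i\nu(a\tau-b)}$, which is Zwegers' definition of $R$; the definition of $R$ displayed in this paper has $e^{+\pi iv^2\tau}$ (a sign typo), so verify against Zwegers' normalization, not the paper's display.

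For part (2) your primary route is genuinely different from Zwegers, who deduces (2) from (1) by splitting the path at $-\overline{\tau}$ and using the $S$-transformation of $g_{a,b}$ together with the relations among $\mu$, $R$, and $h$, i.e., essentially your parenthetical alternative; your direct Gaussian-representation route is self-contained and does work. Indeed, the Lipschitz evaluation gives, up to the prefactor $\tfrac{\pi i}{2}e^{\pi ia}$, the kernel $e^{-2\pi b\xi}/\cosh\left(\pi(\xi+ia)\right)+e^{2\pi b\xi}/\cosh\left(\pi(\xi-ia)\right)$, and after folding the two pieces together (via $\xi\mapsto-\xi$), shifting $\xi\mapsto\xi-ia$, and using that $h$ is even, one obtains exactly $-e^{2\pi ia(b+\frac12)}q^{-a^2/2}h(a\tau-b;\tau)$. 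Two steps of your plan need repair, though. First, the ``Fubini'' step is not Fubini: $\sum_{w}|w|/(w^2+\xi^2)$ diverges (the terms decay only like $1/|w|$), so interchanging the $w$-sum with the $z$- and $\xi$-integrals is not absolutely convergent as written; one standard fix is to truncate the $z$-integral at $i\epsilon$ (where all interchanges are absolute, thanks to the factors $e^{-\pi\epsilon w^2}$) and let $\epsilon\to0^+$, or to handle the conditionally convergent lattice sum by Dirichlet/Abel summation. Second, your bookkeeping of the kernel is off: the lattice sum does not produce $1/\cosh\pi\xi$ directly, but the shifted kernels above; the factor $1/\cosh\pi\xi$ and the full linear exponent $-2\pi(a\tau-b)\xi$ appear only after the contour shift. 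Correspondingly, the hypothesis $b\in(-\frac12,\frac12)$ is not needed for the contour shift (which uses only $|a|<\frac12$, as you say); it is what places the Lipschitz phase $b+\frac12$ in $(0,1)$ and what makes the integral converge at the endpoint $z=0$, where $g_{a+\frac12,b+\frac12}(z)$ decays precisely because $b+\frac12\notin\Z$.
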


We also use the following extension of Theorem \ref{thm_z116} given in Folsom et. al \cite{FGKST}.

\begin{lemma}[Lemma 2.8 of \cite{FGKST}]\label{lem_Zlem}  
Let $\tau \in \mathbb H$.  
\begin{enumerate}
\item 
For $b\in\mathbb R\setminus \frac12\mathbb Z$, $$\int_{-\overline{\tau}}^{i\infty} \frac{g_{1,b+\frac12}(z)}{\sqrt{-i(z+\tau)}} dz = -ie\left(-\frac{\tau}{8} + \frac{b}{2}  \right)R\left(\frac{\tau}{2}-b;\tau\right) + i.$$
\item
For $b\in (-\frac12,\frac12)\setminus\{0\}$, $$\int_{0}^{i\infty} \frac{g_{1,b+\frac12}(z)}{\sqrt{-i(z+\tau)}}dz = -ie\left(-\frac{\tau}{8} + \frac{b}{2}  \right)h\left(\frac{\tau}{2}-b;\tau\right) + i.$$
\item 
For $a\in (-\frac12,\frac12)\setminus\{0\}$, $$\int_{0}^{i\infty} \frac{g_{a+\frac12,1}(z)}{\sqrt{-i(z+\tau)}}dz = -e\left(-\frac{a^2}{2}\tau + a  \right)h\left(a\tau-\frac12;\tau\right) + \frac{e(a)}{\sqrt{-i\tau}}.$$
\end{enumerate}
\end{lemma}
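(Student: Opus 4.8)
\textit{Strategy.} All three identities are boundary specializations of Zwegers' Theorem \ref{thm_z116}: parts (1) and (2) are the limit $a\to\tfrac12^-$ of Theorem \ref{thm_z116}(1) and (2) respectively (since $g_{a+\frac12,b+\frac12}\to g_{1,b+\frac12}$ as $a\to\tfrac12$), while part (3) is the limit $b\to\tfrac12^-$ of Theorem \ref{thm_z116}(2) (since then $g_{a+\frac12,b+\frac12}\to g_{a+\frac12,1}$). The plan is to apply Theorem \ref{thm_z116} with one parameter approaching $\tfrac12$ from below and pass to the limit on both sides. The only subtlety is a single ``degenerate'' term of $g_{a+\frac12,b+\frac12}$ whose coefficient tends to $0$ but whose Eichler integral does not, and this term is precisely what produces the additive corrections $+i$ and $\tfrac{e(a)}{\sqrt{-i\tau}}$.

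\textit{Parts (1) and (2).} Fix $b$ and set $a=\tfrac12-\delta$ with $\delta\to0^+$. I would split $g_{a+\frac12,b+\frac12}$ into the term indexed by $v=a-\tfrac12\to0^-$ plus the remaining sum $G_a$, where $G_a\to g_{1,b+\frac12}$; since the smallest surviving $|v|$ stays near $1$, the tail admits a uniform integrable dominator near $i\infty$, so dominated convergence gives $\int \tfrac{G_a(z)}{\sqrt{-i(z+\tau)}}\,dz\to\int\tfrac{g_{1,b+\frac12}(z)}{\sqrt{-i(z+\tau)}}\,dz$. The degenerate term contributes
\[
I_\delta := \ve\, e\!\left(\ve(b+\tfrac12)\right)\int_{-\overline{\tau}}^{i\infty}\frac{e^{\pi i\ve^2 z}}{\sqrt{-i(z+\tau)}}\,dz,\qquad \ve=a-\tfrac12=-\delta,
\]
and after the substitutions $-i(z+\tau)=p$ (so $z=ip-\tau$, $dz=i\,dp$, with the contour becoming $[2\impart\tau,\infty)$) and then $p=t^2$, this reduces to a Gaussian integral whose limit is $i\,\sgn(\ve)=-i$. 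On the right, $R(a\tau-b)$ is continuous in $a$ (it is assembled from error functions), so it tends to $R(\tfrac\tau2-b)$ while the prefactor collapses to $-ie(-\tfrac\tau8+\tfrac b2)$; moving the $-i$ across the equality yields the stated $+i$. Part (2) is identical except that the lower endpoint $0$ differs from $-\overline{\tau}$ by $\int_{-\overline{\tau}}^{0}\tfrac{\ve e^{\pi i\ve^2 z}}{\sqrt{-i(z+\tau)}}\,dz=O(\delta)$, so the degenerate term again contributes $-i$, with $h$ (entire in its first argument) replacing $R$ by continuity. The hypotheses $b\notin\tfrac12\Z$ (resp. $b\neq0$) are exactly what keep the degenerate term isolated.

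\textit{Part (3).} This is the most delicate case and is where the main obstacle lies. Sending $b\to\tfrac12^-$ degenerates no coefficient in the sum over $v\in a+\tfrac12+\Z$, so the non-uniformity is not at the cusp $i\infty$ but at the cusp $0$. By the modular transformation Lemma \ref{lem_gabprop}(5), the behavior of $g_{a+\frac12,b+\frac12}(z)$ near $z=0$ is governed by $g_{b+\frac12,-(a+\frac12)}(-1/z)$ near $-1/z=i\infty$, and there the term $v=b-\tfrac12\to0$ degenerates. Performing the substitution $z\mapsto-1/z$ — which carries the Jacobian together with the factor $(-iz)^{3/2}$ from Lemma \ref{lem_gabprop}(5), and evaluates the kernel at $z=0$ as $(-i\tau)^{-1/2}$ — and then running the same Gaussian computation as above produces the correction $\tfrac{e(a)}{\sqrt{-i\tau}}$, while the main term tends to $-e(-\tfrac{a^2}{2}\tau+a)h(a\tau-\tfrac12)$ by continuity of $h$.

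\textit{Main obstacle.} The principal difficulties are (i) justifying the interchange of limit and integral by exhibiting a uniform integrable dominator at the relevant cusp, and (ii) the exact evaluation of the degenerate-term integral. For parts (1) and (2) this is a short Gaussian substitution giving $\pm i$; for part (3), however, the degeneration occurs at the cusp $0$ and must be transported to $i\infty$ by modularity, which is both the bookkeeping bottleneck and the reason the correction $\tfrac{e(a)}{\sqrt{-i\tau}}$ is non-constant rather than simply $\pm i$.
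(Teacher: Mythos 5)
This paper does not actually prove Lemma \ref{lem_Zlem}: it is imported verbatim, with citation, as Lemma 2.8 of \cite{FGKST}, and is used here only as an input to the proof of Theorem \ref{VTransf}. So the comparison must be with the proof in \cite{FGKST}, which handles the boundary parameters directly by adapting Zwegers' term-by-term proof of Theorem \ref{thm_z116}: each term of $g$ is integrated against the kernel in closed form via incomplete Gaussian integrals and matched against the series defining $R$ (resp.\ against $h$), the corrections $+i$ and $e(a)/\sqrt{-i\tau}$ arising from the single index where the matching fails. Your proposal takes a genuinely different route, realizing all three identities as degenerations of Theorem \ref{thm_z116} as $a\to\tfrac12^-$ (parts (1),(2)) or $b\to\tfrac12^-$ (part (3)), and it is sound: the degenerate frequency $\nu_0=a-\tfrac12\to 0^-$ contributes
\[
2i\,\sgn(\nu_0)\,e^{-\pi i\nu_0^2\tau}\int_{|\nu_0|\sqrt{2\impart\tau}}^{\infty}e^{-\pi s^2}\,ds \longrightarrow i\,\sgn(\nu_0)=-i,
\]
which, moved across the equation, is exactly the $+i$; continuity of $R$ (real-analytic in $u$) and of $h$ (entire in $u$) handles the right-hand sides; and in part (3) the degeneration indeed sits at the cusp $0$, where transporting it to $i\infty$ via Lemma \ref{lem_gabprop}(5) freezes the kernel at $(-i\tau)^{-1/2}$ and the same Gaussian computation yields $e(a)/\sqrt{-i\tau}$. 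What your limit argument buys is economy (no re-derivation of Zwegers' termwise identities); what it costs is the analytic work of justifying the limit–integral interchange at both cusps, which you correctly identify as the crux.

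One gap to repair: part (2) is not ``identical'' to part (1) plus an $O(\delta)$ endpoint shift. Once the contour starts at $0$, dominated convergence for the non-degenerate part $G_a$ must be justified at the cusp $0$, and the naive termwise bound fails there: $\sum_{|\nu|\geq 3/4}|\nu|e^{-\pi\nu^2\impart z}$ grows like $(\impart z)^{-1}$ as $z\to 0$, which is not integrable against the kernel. The needed dominator comes from the cancellation encoded in Lemma \ref{lem_gabprop}(5), namely
\[
\bigl|g_{a+\frac12,b+\frac12}(z)\bigr|\;\leq\;|z|^{-3/2}\sum_{\mu\in b+\frac12+\Z}|\mu|\,e^{-\pi\mu^2\impart(-1/z)},
\]
a bound independent of $a$ (the frequencies at the cusp $0$ are governed by the fixed parameter $b$), hence a legitimate uniform integrable dominator near $z=0$; together with $|\nu_0 e^{\pi i\nu_0^2 z}|\leq 1$ this closes part (2). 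In other words, the modularity input you invoke only in part (3) is already required in part (2). With that correction, and the routine branch bookkeeping in part (3) written out, your proof is complete.
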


We will also use of the following theorem of Kang \cite{Kang}\footnote{The notation used here is slightly different than Kang's, and is taken from Zwegers \cite{zwegers}.}.  

\begin{theorem}[Kang \cite{Kang}] \label{kangthm}
If $\alpha\in\mathbb{C}$ such that $\alpha \not \in \frac12 \mathbb Z \tau + \frac12 \mathbb Z$, then
\[
\mu\left(2\alpha, \frac{\tau}{2};\tau\right) = iq^\frac18 g_2(e^{2\pi i \alpha};q^\frac12) -e^{-2\pi i \alpha}q^\frac18 \frac{\eta(\tau)^4}{\eta(\frac{\tau}{2})^2 \vartheta(2\alpha;\tau)},
\]
where $g_2$ is the universal mock theta function defined by
\[
g_2(z;q):=\sum_{n=0}^\infty \frac{(-q)_n q^{n(n+1)/2}}{(z;q)_{n+1}(z^{-1}q;q)_{n+1}}.
\]
\end{theorem}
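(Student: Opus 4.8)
The plan is to prove this identity, which is essentially a $q$-series identity of Kang, by treating it as an equality of meromorphic functions in the single variable $z := e^{2\pi i\alpha}$, with $\tau$ (and hence $Q := q^{1/2} = e^{\pi i\tau}$) held fixed, and matching poles and residues on both sides before pinning down the remaining theta-quotient discrepancy.

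First I would substitute $u = 2\alpha$ and $v = \tfrac{\tau}{2}$ directly into Definition \ref{def:mu}. Since $e^{2\pi i v} = q^{1/2} = Q$ and $e^{\pi i u} = z$, the Appell--Lerch sum collapses to
\[
\mu\!\left(2\alpha,\tfrac{\tau}{2};\tau\right) = \frac{z}{\vartheta\!\left(\tfrac{\tau}{2};\tau\right)}\sum_{n\in\mathbb{Z}}\frac{(-1)^n Q^{n^2+2n}}{1 - z^2 Q^{2n}},
\]
and I would use the product expansion (\ref{eq:theta}) to rewrite $\vartheta(\tfrac{\tau}{2};\tau)$ as an explicit eta-quotient in $q$. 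This exhibits the left-hand side as a bilateral Lambert-type series whose only $z$-poles sit at $z^2 \in q^{\mathbb{Z}}$, i.e. exactly where $2\alpha \in \mathbb{Z}\tau + \mathbb{Z}$, the same locus where $\vartheta(2\alpha;\tau)^{-1}$ is singular.

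Next I would bring the universal mock theta function into the same shape. Expanding the defining $q$-hypergeometric series for $g_2(z;Q)$ and performing a partial-fraction decomposition of its summand as a rational function of $z$ (equivalently, rewriting $g_2$ as a Hecke-type double sum via a Bailey-pair manipulation) converts $g_2(z;Q)$ into a bilateral Appell--Lerch sum of precisely the form produced above. I would then compare the two series: after accounting for the theta prefactors their principal parts at each pole $z^2 = q^m$ agree, so the combination $\mu(2\alpha,\tfrac{\tau}{2};\tau) - iq^{1/8}g_2(z;Q)$ is a single-valued function of $z$ whose only singularities are simple poles at $z^2 \in q^{\mathbb{Z}}$ matching those of $\vartheta(2\alpha;\tau)^{-1}$. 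A residue computation at one representative pole (for instance $z=1$, coming from the $n=0$ term) fixes the proportionality constant, and an elliptic Liouville argument exploiting the common quasi-periodicity of all three terms under $z \mapsto qz$ forces the identity to hold with exactly the correction factor $-z^{-1}q^{1/8}\eta(\tau)^4/(\eta(\tfrac{\tau}{2})^2\vartheta(2\alpha;\tau))$. Here the mock-ness of $\mu$ and $g_2$ is no obstruction: it concerns modularity in $\tau$, whereas the argument is carried out in the elliptic variable $z$ for fixed $\tau$, where each side satisfies a genuine functional equation.

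The main obstacle is the middle step: rewriting the hypergeometric series $g_2(z;Q)$ as an Appell--Lerch sum. This is the genuine heart of the statement and requires $q$-series machinery (Bailey pairs, or an explicit partial-fraction and residue analysis of the summand) that lies outside the transformation lemmas recorded in this section. Once that conversion is in hand, the remaining bookkeeping — matching $\vartheta(\tfrac{\tau}{2};\tau)^{-1}$ against the eta-quotient, tracking the factors of $i$ and $q^{1/8}$, and confirming the single residue — is routine but must be done carefully, since an error in any root of unity or half-integral power of $q$ would be invisible to the pole-matching argument and detectable only by comparing leading $q$-expansion coefficients.
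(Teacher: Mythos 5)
First, a structural point: the paper does not prove this statement at all. It is imported verbatim (up to notation) from Kang \cite{Kang}, so there is no internal proof to compare your argument against; a self-contained proof would have to reproduce Kang's $q$-series derivation or an equivalent argument.

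Your outline has the right general shape --- identities of this type can indeed be proved by fixing $\tau$ and working in the elliptic variable $z=e^{2\pi i\alpha}$ --- but as written it contains a genuine gap together with some bookkeeping errors that the method cannot absorb. The gap is the one you name yourself: rewriting the $q$-hypergeometric series $g_2(z;Q)$ as a bilateral Appell--Lerch sum, or equivalently computing its principal parts and its defect under $z\mapsto Qz$, is not a preliminary reduction; it \emph{is} the substance of the theorem, and everything else in your sketch is formal once it is granted. Without it the Liouville step cannot even be set up, because neither $\mu\left(2\alpha,\tfrac{\tau}{2};\tau\right)$ nor $g_2(z;Q)$ is elliptic in $z$: each satisfies an \emph{inhomogeneous} $q$-difference equation under $z\mapsto Qz$ (for $\mu$ this is one of Zwegers' elliptic transformation laws, not among those quoted in Lemma \ref{lem_mu}), and one must first verify that the inhomogeneous terms cancel in the combination $\mu - iq^{1/8}g_2$ before ``common quasi-periodicity'' has any content --- and that verification is equivalent to the conversion you deferred. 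Separately, the pole-matching is not symmetric in the way you describe: the Lambert series for $\mu$ and the factor $\vartheta(2\alpha;\tau)^{-1}$ are singular at $z=\pm Q^{m}$, whereas $g_2(z;Q)$ is singular only at $z=Q^{m}$; thus at $z=-Q^{m}$ the entire principal part of $\mu$ must be carried by the eta-theta term alone, while at $z=+Q^{m}$ it is shared between $g_2$ and the eta-theta term. In particular your proposed normalization at $z=1$ is a poor choice, since all three terms are singular there and the residue computation does not isolate the unknown eta-quotient constant; you would want $z=-1$ (or any $-Q^{m}$), where $g_2$ is regular. These defects are repairable, but together with the deferred central step they mean the proposal does not yet constitute a proof.
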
  

The following lemma will be useful for determining when $V_\alpha$ is well defined on subsets of rationals.  Let
\begin{equation}\label{S_def}
S = \left\{\frac{h}{k} \in \mathbb{Q}\, \bigg|\,  h \in \mathbb{Z}, k \in \mathbb{N}, \gcd(h,k)=1, h \equiv 1 \!\!\!\!\! \pmod2 \right\}.
\end{equation}

\begin{lemma}\label{lem:eta_vanishing}
Fix $A,a \in \Z$, $C\in \N$, with $0 \leq a \leq 3$, such that $\frac{A}{2C}$ and $\frac{a}{4}$ are not both in $\frac12 \mathbb{Z}$.  Suppose $\widetilde{S}\subseteq S\subseteq\mathbb{Q}$ is a set of rationals such that for all $n \geq 1$ and all $\frac{h}{k} \in \widetilde{S}$, 
\[
\frac{nh}{k} \pm \left(\frac{Ah}{Ck} + \frac{a}{2} \right) \not\in \mathbb{Z}.
\]
Then
\[
\mu\left(\frac{Ah}{Ck} + \frac{a}{2}, \frac{h}{2k}; \frac{h}{k}\right) = ie^{\frac{\pi i h}{4k}} g_2(i^a e^{\frac{\pi i Ah}{Ck}}; e^{\frac{\pi i h}{k}}),
\]
and has a well-defined value in $\mathbb{C}$. 
\end{lemma}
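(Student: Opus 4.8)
The plan is to realize the left-hand side as a radial limit of Zwegers' $\mu$-function and to evaluate it using Kang's identity (Theorem \ref{kangthm}). I would write $\tau = \frac hk + iy$ and let $y\to 0^+$, so $\tau\to\frac hk$ from within $\H$, with $\alpha = \frac{A}{2C}\tau+\frac a4$. Then $2\alpha\to\frac{Ah}{Ck}+\frac a2$, $\frac\tau2\to\frac h{2k}$, $e^{2\pi i\alpha}\to i^a e^{\pi i Ah/(Ck)}$, $q^{1/2}\to e^{\pi i h/k}$ and $iq^{1/8}\to ie^{\pi i h/(4k)}$. First I would check Kang's hypothesis $\alpha\notin\frac12\Z\tau+\frac12\Z$ along the whole path: comparing real and imaginary parts shows $\alpha\in\frac12\Z\tau+\frac12\Z$ forces both $\frac{A}{2C}$ and $\frac a4$ into $\frac12\Z$, which is excluded. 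Thus for $y>0$ we may substitute into Theorem \ref{kangthm}, and the problem reduces to two limits: that $iq^{1/8}g_2(e^{2\pi i\alpha};q^{1/2})$ tends to the claimed finite value, and that the eta-quotient term $e^{-2\pi i\alpha}q^{1/8}\eta(\tau)^4/(\eta(\tfrac\tau2)^2\vartheta(2\alpha;\tau))$ tends to $0$.

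For the $g_2$-term, the key observation is that the limiting base $\zeta := e^{\pi i h/k}$ satisfies $\zeta^k = e^{\pi i h} = -1$, since $h$ is odd (here we use $\widetilde S\subseteq S$). Hence $1+\zeta^k = 0$, so the numerator Pochhammer $(-Q)_n$ of each summand of $g_2$ acquires a vanishing factor once $n\ge k$, and at the limit the series terminates to the finite sum $\sum_{n=0}^{k-1}$. I would then check the denominators $(z;Q)_{n+1}(z^{-1}Q;Q)_{n+1}$, with $z = i^a e^{\pi i Ah/(Ck)}$, are nonzero for $0\le n\le k-1$: the factors $1 - zQ^j$ and $1 - z^{-1}Q^j$ with $j\ge 1$ vanish exactly when $\frac{jh}k\pm(\frac{Ah}{Ck}+\frac a2)\in 2\Z\subseteq\Z$, forbidden by the hypothesis, while the remaining factor $1-z$ (the $j=0$ case) is nonzero because the $n=k$ instance gives $h\pm(\frac{Ah}{Ck}+\frac a2)\notin\Z$, i.e. $\frac{Ah}{Ck}+\frac a2\notin\Z$, so $z\neq 1$. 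Finally, since $|Q|<1$ for $y>0$ and the tail $\sum_{n\ge k}$ carries the common factor $(1+Q^k)\to 0$, a dominated-convergence argument shows the tail vanishes and $g_2(e^{2\pi i\alpha};q^{1/2})\to g_2(i^a e^{\pi i Ah/(Ck)};e^{\pi i h/k})$, a well-defined complex number.

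The main work, and the step I expect to be the principal obstacle, is showing the eta-quotient term vanishes. Substituting \eqref{def:eta} and \eqref{eq:theta}, the prefactor powers of $q$ and the factors $e^{\pm 2\pi i\alpha}$ cancel exactly, so this term equals $iq^{1/8}P(\tau)$ for an explicit ratio $P$ of infinite products. To take its radial limit I would pass to the cusp: choosing $\gamma = \smmat{h}{\beta}{k}{\delta}\in\SL_2(\Z)$ with $\gamma(i\infty)=\frac hk$ and writing $\tau = \gamma w$ with $w\to i\infty$, the eta-multiplier transformation and the standard modular transformation of $\vartheta$ (both consequences of the framework of Lemma \ref{zwemutrans}) convert each factor into its leading exponential rate. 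A direct computation then gives $\eta(\tau)^4/\eta(\tfrac\tau2)^2\asymp y^{-1}e^{-\pi/(4k^2 y)}$, while $\vartheta(2\alpha;\tau)\asymp y^{-1/2}e^{-\pi\mu_0/(k^2y)}$ with $\mu_0 = \mathrm{dist}(\lambda,\Z+\tfrac12)^2$ and $\lambda = \frac{Ah}C+\frac{ak}2$, so $|iq^{1/8}P(\tau)|\asymp k^{1/2}y^{-1/2}e^{-\pi(\frac14-\mu_0)/(k^2y)}$. Since $\mu_0\le\frac14$ always, the delicate point is that the two leading exponential rates coincide precisely when $\lambda\in\Z$ (giving $\mu_0=\frac14$), and only the strict inequality $\mu_0<\frac14$ forces the term to $0$. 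I would close this gap with the full strength of the hypothesis: if $\lambda = k(\frac{Ah}{Ck}+\frac a2)\in\Z$, then solving $nh\equiv -\lambda\pmod k$ (possible since $\gcd(h,k)=1$) produces some $n\ge1$ with $\frac{nh}k+(\frac{Ah}{Ck}+\frac a2)\in\Z$, contradicting the ``$+$'' condition. Hence $\lambda\notin\Z$, so $\mu_0<\frac14$ and the eta-quotient term tends to $0$.

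Combining these, the radial limit of Kang's identity yields $\mu(\frac{Ah}{Ck}+\frac a2,\frac h{2k};\frac hk) = ie^{\pi i h/(4k)}g_2(i^a e^{\pi i Ah/(Ck)};e^{\pi i h/k})$, interpreted as the (now shown to exist) radial limit, and the $g_2$-side being a finite sum with nonvanishing denominators shows the value lies in $\C$, as claimed.
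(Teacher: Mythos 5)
Your proposal is correct and rests on the same key input as the paper, namely Kang's identity (Theorem \ref{kangthm}), together with the same two combinatorial observations: the factor $(-q^{1/2};q^{1/2})_n$ kills every summand of $g_2$ with $n\geq k$ because $h$ is odd, and the hypothesis on $\widetilde{S}$ is exactly what keeps all denominator factors away from zero (including the $j=0$ factor $1-z$, which you correctly recover from the $n=k$ instance of the hypothesis). Where you genuinely diverge is in the interpretation of the left-hand side and hence in the treatment of the eta-quotient term. The paper takes no radial limit: it rewrites the second term of Kang's identity as $iq^{1/8}(q;q)_\infty(-q^{1/2};q^{1/2})_\infty^2\big/\bigl((e^{4\pi i\alpha};q)_\infty(e^{-4\pi i\alpha}q;q)_\infty\bigr)$ and substitutes $\tau=h/k$ directly, observing that the numerator contains the literal factor $1-q^k=0$ while the denominator factors are termwise nonzero --- a one-paragraph formal specialization at a root of unity. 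You instead define the value as the limit along $\tau=h/k+iy$, $y\to 0^+$, which forces you to resolve a genuine $0/0$ (both $\eta(\tau)^4$ and $\vartheta(2\alpha;\tau)$ vanish in the limit) by comparing exponential decay rates at the cusp, and to control the tail of $g_2$ by dominated convergence. Your route buys a stronger conclusion --- that the finite sum really is the radial limit of $\mu$ from $\H$, which is what one morally wants for the subsequent quantum modularity statements --- at the price of two steps you only sketch (the asymptotic $\vartheta(2\alpha;\tau)\asymp y^{-1/2}e^{-\pi\mu_0/(k^2y)}$ and the uniform domination of the tail), both standard but not free. Your reduction of the borderline case $\mu_0=\tfrac14$ to the hypothesis by solving $nh\equiv-\lambda\pmod{k}$ is correct and is a step the paper never needs, since in the formal substitution the vanishing of $(q;q)_\infty$ is unconditional.
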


\begin{proof}
First, we note that if $\alpha \not\in \frac12 \mathbb Z \tau + \frac12 \mathbb Z$, then using the \eqref{def:eta} and \eqref{eq:theta}, we can write
\begin{equation}\label{eq:muKang}
\mu\left(2\alpha, \frac{\tau}{2};\tau\right) = iq^\frac18 g_2(e^{2\pi i \alpha};q^\frac12) - iq^{\frac{1}{8}} \cdot \frac{(q;q)_\infty (-q^{\frac12};q^{\frac12})_\infty^2}{(e^{4\pi i \alpha};q)_\infty (e^{-4\pi i\alpha}q;q)_\infty}.
\end{equation}

Replacing $\tau$ with $\frac{h}{k}\in S$ in \eqref{eq:muKang} causes the $(q;q)_\infty$ term to vanish.  Moreover, we see that when setting
$\alpha = \frac{Ah}{2Ck} + \frac{a}{4}$, where $\frac{A}{2C}$, and $\frac{a}{4}$ are not both in $\frac12 \mathbb{Z}$, the terms $(e^{4\pi i \alpha};q)_\infty$ and $(e^{-4\pi i\alpha}q;q)_\infty$ will not vanish due to our hypotheses on $\widetilde{S}$.  Thus, we have that
\[
\mu\left(\frac{Ah}{Ck} + \frac{a}{2}, \frac{h}{2k}; \frac{h}{k}\right) = ie^{\frac{\pi i h}{4k}} g_2(i^a e^{\frac{\pi i Ah}{Ck}}; e^{\frac{\pi i h}{k}}),
\]
and it remains to show that this has a well-defined value in $\C$.

By definition, 
\begin{equation}\label{eq:g_2modified}
g_2(e^{2\pi i \alpha};q^\frac12) = \sum_{n=0}^\infty \frac{(-q^\frac12; q^\frac12)_n q^{n(n+1)/4}}{(e^{2\pi i \alpha};q)_{n+1}(e^{-2\pi i \alpha}q;q)_{n+1}}.
\end{equation}

Following similar reasoning as above, we note that replacing $\tau$ with $\frac{h}{k}\in S$ in \eqref{eq:g_2modified} causes the $(-q^\frac12; q^\frac12)_n$ term to vanish when $n\geq k$.  Moreover, we see that when setting
$\alpha = \frac{Ah}{2Ck} + \frac{a}{4}$, where $\frac{A}{2C}$, and $\frac{a}{4}$ are not both in $\frac12 \mathbb{Z}$, the terms $(e^{2\pi i \alpha};q)_{n+1}$ and $(e^{-2\pi i \alpha}q;q)_{n+1}$ will not vanish for any $n\geq 1$ due to our hypotheses on $\widetilde{S}$.  Thus we see that under our hypotheses, $g_2(i^a e^{\frac{\pi i Ah}{Ck}}; e^{\frac{\pi i h}{k}})$ is a finite sum, and thus $ie^{\frac{\pi i h}{4k}} g_2(i^a e^{\frac{\pi i Ah}{Ck}}; e^{\frac{\pi i h}{k}})$ gives a well-defined value in $\C$.
\end{proof}

\section{Mock Modularity}\label{mock}

In order to prove Theorem \ref{mockmodularity}, we need to show $V_{\alpha}$ can be completed to a harmonic Maass form, as defined in Definition \ref{MaassForm}. 

Throughout this section, let 
\begin{align}\label{uvtaudefs}
u_\tau &:= 2\alpha = \frac{A}{C} \tau + \frac{a}{2} \nonumber \\
v_\tau &:= \frac{\tau}{2},
\end{align}
so that 
\[
V_\alpha(\tau) = i^{a+1}q^{-\frac{(2A-C)^2}{8C^2}} \mu\left(u_\tau, v_\tau; \tau\right).
\]
In \eqref{muhat}, we saw the completion of the function $\mu(u,v;\tau)$, defined in Definition \ref{def:mu}, to $\widehat{\mu}(u,v;\tau)$.  Thus we define 
\begin{equation}\label{Vhatdef}
\widehat{V}_{\alpha}(\tau) :=i^{a+1}q^{-\frac{(2A-C)^2}{8C^2}}\widehat{\mu}\left(u_\tau, v_\tau; \tau \right),
\end{equation}
with the aim of showing that $\widehat{V}_{\alpha}(\tau)$ is a harmonic Maass form.

We define the groups $A_{\alpha}$ by
\begin{equation}\label{mockgp}
A_{\alpha} = 
\begin{cases}
\Gamma^1(\lcm(2,C))       & \text{ if } a\in \{0,2\}, \\
\Gamma^1(\lcm(2,C))\cap \Gamma_0(2) & \text{ if } a\in \{1,3\} \text{ and } C\not\equiv 0 \pmod{4}, \\
\left(\Gamma^1(C)\cap \Gamma_0(2)\right) \cup\\ \quad \left(\left(\Gamma^1(\frac{C}{2})\cap\Gamma^0(C)\right)\setminus\left(\Gamma_0(2)\cup\Gamma^1(C)\right)\right)   & \text{ if } a\in \{1,3\} \text{ and } C\equiv 0 \pmod{4}. \\
\end{cases}
\end{equation}

It is not immediately obvious that $A_\alpha$ is a group when $a\in \{1,3\}$ and $C\equiv 0 \pmod{4}$.  We prove this in the following lemma.  When $C\equiv 0 \pmod{4}$, we define the following sets for convenience,
\begin{align}\label{GammaDefs}
\G_1 &:= \G^1(C) \cap \G_0(2) \nonumber \\
\G_2 &:= \G^1\left( \frac{C}{2} \right) \cap \G^0(C) \\
\G_3 &:= \G_0(2) \cup \G^1(C). \nonumber
\end{align}

\begin{lemma}
Let $C\equiv 0 \pmod{4}$.  Then the set 
\[
G = \G_1 \cup (\G_2 \backslash \G_3)
\] 
is a congruence subgroup of $SL_2(\Z)$.
\end{lemma}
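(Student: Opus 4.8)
The plan is to realize $G$ as the preimage of a subgroup of $\SL_2(\Z/C\Z)$ under the reduction-mod-$C$ homomorphism $\pi\colon \SL_2(\Z)\to \SL_2(\Z/C\Z)$, since any such preimage automatically contains $\ker\pi=\G(C)$ and is therefore a congruence subgroup. Because $4\mid C$ we have $2\mid C$, so $\G(C)\subseteq \G^1(C)\cap\G_0(2)=\G_1\subseteq G$, which already supplies a principal congruence subgroup inside $G$; the real content is that $G$ is closed under the group operations.

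First I would unwind the definitions to obtain explicit congruence descriptions of the two pieces. By definition $\G_1$ consists of the $\smmat{a}{b}{c}{d}\in\SL_2(\Z)$ with $b\equiv 0\pmod C$, $a\equiv d\equiv 1\pmod C$, and $c\equiv 0\pmod 2$. A matrix lies in $\G_2$ exactly when $b\equiv 0\pmod C$ and $a\equiv d\equiv 1\pmod{C/2}$; removing $\G_0(2)$ forces $c$ odd, and removing $\G^1(C)$ forces $a\not\equiv 1\pmod C$. Since $a\equiv 1\pmod{C/2}$, the only remaining possibility is $a\equiv 1+\tfrac C2\pmod C$. Here the hypothesis $4\mid C$ first enters: from $ad\equiv 1\pmod C$ (as $b\equiv 0$) together with $(1+\tfrac C2)^2\equiv 1\pmod C$ (using $\tfrac{C^2}{4}\equiv 0\pmod C$), one checks that $a\equiv 1+\tfrac C2\pmod C$ forces $d\equiv 1+\tfrac C2\pmod C$. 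Thus $\G_2\setminus\G_3$ is exactly the set of $\smmat{a}{b}{c}{d}$ with $b\equiv 0\pmod C$, $a\equiv d\equiv 1+\tfrac C2\pmod C$, and $c$ odd.

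These two descriptions combine into a single congruence condition: since $\tfrac C2\,c\equiv 0\pmod C$ when $c$ is even and $\tfrac C2\,c\equiv \tfrac C2\pmod C$ when $c$ is odd, the set $G$ is precisely
\[
G=\left\{\smmat{a}{b}{c}{d}\in\SL_2(\Z)\;:\; b\equiv 0,\ a\equiv d\equiv 1+\tfrac C2\,c \pmod C\right\}.
\]
Every defining condition depends only on the entries modulo $C$ (the parity of $c$ is detected mod $C$ because $2\mid C$), so $G=\pi^{-1}(\overline G)$, where $\overline G\subseteq\SL_2(\Z/C\Z)$ is cut out by the same congruences. It then suffices to prove that $\overline G$ is a subgroup of $\SL_2(\Z/C\Z)$.

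Finally I would verify the subgroup axioms for $\overline G$ by direct computation, which is where $4\mid C$ is used most essentially. The identity clearly lies in $\overline G$. For closure, given $g=\smmat{a}{b}{c}{d}$ and $g'=\smmat{a'}{b'}{c'}{d'}$ in $\overline G$, one computes modulo $C$ that the upper-right entry of $gg'$ vanishes, that its lower-left entry is $\equiv c+c'$, and that both diagonal entries are $\equiv (1+\tfrac C2 c)(1+\tfrac C2 c')\equiv 1+\tfrac C2(c+c')$, the cross term $(\tfrac C2)^2cc'$ dropping out because $\tfrac{C^2}{4}\equiv 0\pmod C$; hence $gg'\in\overline G$. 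For inverses, $g^{-1}=\smmat{d}{-b}{-c}{a}$, and since $-\tfrac C2\equiv \tfrac C2\pmod C$ the condition $d\equiv 1+\tfrac C2\,c\equiv 1+\tfrac C2(-c)\pmod C$ holds, and similarly for the other diagonal entry, so $g^{-1}\in\overline G$. This shows $\overline G$ is a group, whence $G=\pi^{-1}(\overline G)$ is a congruence subgroup containing $\G(C)$. The main obstacle is the bookkeeping in the second step—correctly pinning down $\G_2\setminus\G_3$ and recognizing that $4\mid C$ (rather than merely $2\mid C$) is exactly what makes both the determinant argument there and the multiplicative closure of $\overline G$ go through; indeed the closure computation would fail for $C\equiv 2\pmod 4$, since then $\tfrac{C^2}{4}\not\equiv 0\pmod C$.
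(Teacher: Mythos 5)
Your proof is correct, and its first half coincides with the paper's: both arguments hinge on pinning down $\G_2\setminus\G_3$ exactly, using $\det g\equiv 1\pmod C$ together with $4\mid C$ (so that $(1+\tfrac C2)^2\equiv 1\pmod C$) to upgrade ``at least one of $a,d$ is $\not\equiv 1\pmod C$'' to ``both $a$ and $d$ are $\equiv 1+\tfrac C2\pmod C$.'' Where you diverge is in how the group axioms are then verified. The paper keeps the two pieces $\G_1$ and $\G_2\setminus\G_3$ separate and checks closure under multiplication by a three-way case analysis on which piece each factor lies in, plus a separate check for inverses. You instead fuse the two pieces into the single congruence condition $b\equiv 0$, $a\equiv d\equiv 1+\tfrac C2 c\pmod C$, observe that this condition only depends on the entries mod $C$, and conclude that $G$ is the preimage of a subset $\overline G\subseteq \SL_2(\Z/C\Z)$; one uniform computation (with the cross term $\tfrac{C^2}{4}cc'$ vanishing mod $C$ precisely because $4\mid C$) then shows $\overline G$ is a subgroup. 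This buys you three things: the case analysis collapses to a single calculation, the containment of $\G(C)$ and hence the ``congruence'' part of the claim is automatic from the preimage description rather than observed separately, and the role of the hypothesis $C\equiv 0\pmod 4$ is isolated cleanly in the two places where $\tfrac{C^2}{4}\equiv 0\pmod C$ is needed. The paper's version is more elementary in that it never leaves $\SL_2(\Z)$, but your packaging makes it more transparent why the statement fails for $C\equiv 2\pmod 4$.
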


\begin{proof}
We first observe that the principal congruence subgroup $\G(\lcm(2,C))$ is a subgroup of $\G_1$ and thus of $G$.  We next observe that the elements of $\G_2 \backslash \G_3$ are those of the form 
\[
g = \begin{pmatrix} \frac{C}{2} k_1 + 1 & Ck_2 \\ 2k_3 + 1 & \frac{C}{2} k_4 + 1 \end{pmatrix},
\]
where $k_i\in \Z$, and at least one of $k_1,k_4$ is odd.  However, we must also have that $\det g=1$, so since
\[
\det g \equiv \frac{C}{2}(k_1+k_4) + 1 \pmod{C},
\]
it follows that both $k_1,k_4$ are odd.  Thus we have that $g\in \G_2 \backslash \G_3$ if and only if $g$ is of the form
\begin{equation}\label{gform}
g = \begin{pmatrix} Ck_1 + \frac{C}{2} + 1 & Ck_2 \\ 2k_3 + 1 & Ck_4 + \frac{C}{2} + 1 \end{pmatrix},
\end{equation}
for $k_i \in \Z$ such that $\det g=1$.  It follows from \eqref{gform} that $\G_2 \backslash \G_3$ is closed under taking inverses.  Since $\G_1$ is a group, it is clearly closed under inverses as well, and thus $G$ is closed under inverses.

We also need to show that $G$ is closed under multiplication.  Let $g,h\in G$. If $g,h\in \G_1$, then $gh\in \G_1$ since $\G_1$ is a group.  Suppose $g,h\in \G_2 \backslash \G_3$.  Then by \eqref{gform} we can write 
\begin{align*}
g &= \begin{pmatrix} Ck_1 + \frac{C}{2} + 1 & Ck_2 \\ 2k_3 + 1 & Ck_4 + \frac{C}{2} + 1 \end{pmatrix}, \\
h &= \begin{pmatrix} C\ell_1 + \frac{C}{2} + 1 & C\ell_2 \\ 2\ell_3 + 1 & C\ell_4 + \frac{C}{2} + 1 \end{pmatrix},
\end{align*}
for integers $k_i, \ell_i$ and so we have that 
\[
gh \equiv \begin{pmatrix} 1 & 0 \\ 2(k_3 + \ell_3 + 1) & 1 \end{pmatrix} \pmod{C},
\]
which shows that $gh \in \G_1 \subseteq G$.  Lastly, suppose that $g\in \G_1$ and $h\in \G_2 \backslash \G_3$.  Then by \eqref{gform},
\begin{align*}
g &= \begin{pmatrix} Ck_1 + 1 & Ck_2 \\ 2k_3 & Ck_4 + 1 \end{pmatrix}, \\
h &= \begin{pmatrix} C\ell_1 + \frac{C}{2} + 1 & C\ell_2 \\ 2\ell_3 + 1 & C\ell_4 + \frac{C}{2} + 1 \end{pmatrix},
\end{align*}
for integers $k_i, \ell_i$.  Thus, 
\[
gh \equiv  \begin{pmatrix} 1 & 0 \\ 2k_3 & 1 \end{pmatrix} \begin{pmatrix} \frac{C}{2} + 1 & 0 \\ 2\ell_3 + 1 & \frac{C}{2} + 1 \end{pmatrix} \equiv \begin{pmatrix} \frac{C}{2} + 1 & 0 \\ 2(k_3 + \ell_3) + 1 & \frac{C}{2} + 1 \end{pmatrix} \pmod{C},
\]
which shows that $gh\in \G_2 \backslash \G_3$.  Similarly, $hg \in \G_2 \backslash \G_3$ as well.  

\end{proof}

Our proof of Theorem \ref{mockmodularity} will utilize Lemma \ref{muhattrans}, so we next need the following lemma, which establishes that the groups we defined in \eqref{mockgp} are as large as possible for our method.  Recall that $\tilde{x}_{\gamma,\tau}$ is defined in \eqref{tildedef}.

\begin{lemma}\label{mockgpthm}
Fix $\alpha = \frac{A}{2C}\tau + \frac{a}{4}$, such that $A,a \in \Z$, $C\in \N$, $0 \leq a \leq 3$, and $\gcd(A,C)=1$, and let $\tau\in\mathcal{H}$. Then both $\tilde{u}_{\gamma,\tau} - u_{\tau}, \tilde{v}_{\gamma,\tau} - v_{\tau}\in \Z\tau +\Z$ if and only if $\gamma\in A_{\alpha}$.
\end{lemma}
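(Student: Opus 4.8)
The plan is to analyze the two membership conditions $\tilde{u}_{\gamma,\tau}-u_\tau\in\Z\tau+\Z$ and $\tilde{v}_{\gamma,\tau}-v_\tau\in\Z\tau+\Z$ separately, characterize the set of $\gamma\in\SL_2(\Z)$ satisfying each via Corollary \ref{congcoro} and Lemma \ref{mockmodcong}, and then intersect the two solution sets. Since every statement produced by those results is an ``if and only if,'' both directions of the lemma come out simultaneously. Throughout I would write $\gamma=\smmat{\mathsf a}{\mathsf b}{\mathsf c}{\mathsf d}$ for the matrix entries. The $v$-condition is immediate: $v_\tau=\tfrac{\tau}{2}=\tfrac{1\cdot\tau+0}{2}$ has $N=2$ dividing the constant term $0$ with $\gcd(1,2)=1$, so Corollary \ref{congcoro} gives $\tilde v_{\gamma,\tau}-v_\tau\in\Z\tau+\Z$ if and only if $\gamma\in\Gamma^1(2)$, i.e.\ $\mathsf b$ even (the diagonal congruences being automatic from $\det\gamma=1$).

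For the $u$-condition I split on the parity of $a$. When $a\in\{0,2\}$ I write $u_\tau=\frac{A\tau+(a/2)C}{C}$, whose constant term $(a/2)C$ is $\equiv0\pmod C$, so with $\gcd(A,C)=1$ the second branch of Corollary \ref{congcoro} yields $\gamma\in\Gamma^1(C)$. Intersecting with $\Gamma^1(2)$ and doing the elementary $\gcd$/CRT bookkeeping (when $C$ is odd, $\mathsf a\equiv\mathsf d\equiv1\pmod C$ together with oddness lifts to $\equiv1\pmod{2C}$; when $C$ is even, oddness is automatic) produces exactly $\Gamma^1(\lcm(2,C))$, matching \eqref{mockgp}.

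When $a\in\{1,3\}$ I use $aC\equiv C\pmod{2C}$ to write $u_\tau\equiv\frac{2A\tau+C}{2C}$, set $N=2C$, $\alpha=2A$, $\beta=C$, and apply Lemma \ref{mockmodcong}, obtaining
\[
2A\mathsf a+C\mathsf c\equiv 2A\pmod{2C},\qquad 2A\mathsf b+C\mathsf d\equiv C\pmod{2C}.
\]
The decisive arithmetic input is $\gcd(2A,C)$, which equals $1$ when $C$ is odd and $2$ when $C$ is even (since $\gcd(A,C)=1$ forces $A$ odd in the even case). For $C\not\equiv0\pmod4$ a CRT reduction of these two congruences into the coprime parts of $2C$, together with $\mathsf b$ even, collapses everything to $\mathsf a\equiv\mathsf d\equiv1\pmod{\lcm(2,C)}$, $\mathsf b\equiv0\pmod{\lcm(2,C)}$, and $\mathsf c$ even, i.e.\ $\Gamma^1(\lcm(2,C))\cap\Gamma_0(2)$, as required.

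The main obstacle is the case $a\in\{1,3\}$ with $C\equiv0\pmod4$, where $\gcd(2A,C)=2$ but $4\mid C$ lets a ``half-integer'' residue survive. Here reducing the first congruence modulo $C$ only forces $\mathsf a\equiv1\pmod{C/2}$, so $\mathsf a\in\{1,\tfrac C2+1\}\pmod C$; substituting $\mathsf a=1+\tfrac C2 s$ back into the congruence modulo $2C$ gives $\mathsf c\equiv s\pmod2$, so $\mathsf a\equiv1\pmod C$ pairs with $\mathsf c$ even while $\mathsf a\equiv\tfrac C2+1\pmod C$ pairs with $\mathsf c$ odd. The parallel analysis of the second congruence with $\mathsf b$ even forces $\mathsf b\equiv0\pmod C$, and $\det\gamma=1$ then pins $\mathsf d$ to the same residue class as $\mathsf a$ modulo $C$, using that $\tfrac C2+1$ is its own inverse mod $C$ when $4\mid C$. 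These two families are precisely $\G_1=\G^1(C)\cap\G_0(2)$ and the set $\G_2\setminus\G_3$ of matrices of the form \eqref{gform}, so the combined solution set is $\G_1\cup(\G_2\setminus\G_3)=A_\alpha$, completing the proof. I would close with the remark that the degenerate case $C=1$ (possible only for $a$ odd, where $u_\tau\equiv\tfrac12\pmod{\Z\tau+\Z}$) is handled directly by the first branch of Corollary \ref{congcoro} and yields the same group.
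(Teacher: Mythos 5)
Your proposal is correct and follows essentially the same route as the paper: reduce the $v$-condition to $\Gamma^1(2)$ via Corollary \ref{congcoro}, handle $a$ even via Corollary \ref{congcoro} applied to $u_\tau=\frac{A\tau+(a/2)C}{C}$, and for $a$ odd extract the two congruences from Lemma \ref{mockmodcong} and split on $C\bmod 4$, with the $4\mid C$ case producing the two families $\G_1$ and $\G_2\setminus\G_3$ according to the parity of the lower-left entry. Your unified modulus $2C$, the explicit parametrization $\mathsf a=1+\tfrac{C}{2}s$ with $\mathsf c\equiv s\pmod 2$, and the separate treatment of $C=1$ (where the hypothesis $N\nmid\alpha$ of Lemma \ref{mockmodcong} fails) are minor refinements of, not departures from, the paper's argument.
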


\begin{proof}
We first observe that by Corollary \ref{congcoro}, $\tilde{v}_{\gamma,\tau} - v_{\tau}\in \Z\tau +\Z$ if and only if $\gamma\in \G^1(2)$.  So it remains to investigate $\tilde{u}_{\gamma,\tau} - u_{\tau}$.  

When $a$ is even, we can write
\[
u_\tau = \frac{A\tau + (a/2)C}{C},
\]
and thus by Corollary \ref{congcoro}, $\tilde{u}_{\gamma,\tau} - u_{\tau} \in \Z\tau +\Z$ if and only if $\gamma\in \G^1(C)$.  Together with our condition that $\tilde{v}_{\gamma,\tau} - v_{\tau}\in \Z\tau +\Z$ if and only if $\gamma\in \G^1(2)$, we see that both $\tilde{u}_{\gamma,\tau} - u_{\tau}, \tilde{v}_{\gamma,\tau} - v_{\tau}\in \Z\tau +\Z$ if and only if $\gamma\in \G^1(\lcm(2,C))$ as desired.

When $a$ is odd, we consider the cases when $C$ is odd, $C\equiv 2 \pmod{4}$, and $C\equiv 0 \pmod{4}$ separately.  

When $C$ is odd, we write
\[
u_\tau = \frac{2A\tau + aC}{2C},
\]
so by Lemma \ref{mockmodcong}, $\tilde{u}_{\gamma,\tau} - u_{\tau} \in \Z\tau +\Z$ if and only if $\gamma = \begin{pmatrix} x & y \\ z & w \end{pmatrix}$ satisfies
\begin{align}
2Ax +aCz &\equiv 2A \pmod{2C}  \label{aoddcong1}\\
2Ay + aCw &\equiv aC\pmod{2C}.  \label{aoddcong2}
\end{align}
We see that if $\gamma\in \G^1(2C) \cap \G_0(2) \subseteq \G^1(2)$, then \eqref{aoddcong1} and \eqref{aoddcong2} are clearly satisfied and thus both $\tilde{u}_{\gamma,\tau} - u_{\tau}, \tilde{v}_{\gamma,\tau} - v_{\tau}\in \Z\tau +\Z$.  Conversely, from \eqref{aoddcong1} we conclude that $z$ must be even and thus $\gamma \in \G_0(2)$.  Furthermore, since $\det\gamma =1$, it follows that $x,w$ must both be odd.  Since $w$ is odd, \eqref{aoddcong2} implies that $Ay\equiv 0 \pmod{C}$ and thus $C\mid y$ forcing $\gamma \in \G^0(C)$ as well.  Combining this with the fact that $\tilde{v}_{\gamma,\tau} - v_{\tau}\in \Z\tau +\Z$ implies $\gamma\in \G^1(2)$, we see that both $\tilde{u}_{\gamma,\tau} - u_{\tau}, \tilde{v}_{\gamma,\tau} - v_{\tau}\in \Z\tau +\Z$ implies that $\gamma\in \G_0(2) \cap \G^0(C) \cap \G^1(2)$.  Since we are assuming $C$ is odd, this group is simply $ \G^1(2C) \cap \G_0(2)$ as desired. 

When $C$ is even, we write
\[
u_\tau = \frac{A\tau + a(C/2)}{C},
\]
so by Lemma \ref{mockmodcong}, $\tilde{u}_{\gamma,\tau} - u_{\tau} \in \Z\tau +\Z$ if and only if $\gamma = \begin{pmatrix} x & y \\ z & w \end{pmatrix}$ satisfies
\begin{align}
Ax +a\left(\frac{C}{2}\right)z &\equiv A \pmod{C}  \label{aoddcong3}\\
Ay + a\left(\frac{C}{2}\right)w &\equiv a\left(\frac{C}{2}\right) \pmod{C}.  \label{aoddcong4}
\end{align}
We see that if $\gamma\in \G^1(C) \cap \G_0(2) \subseteq \G^1(2)$, then \eqref{aoddcong3} and \eqref{aoddcong4} are clearly satisfied.  Moreover, when $C\equiv 0 \pmod{4}$,  $\gamma\in \G_2 \backslash \G_3 \subseteq \G^1(2)$ implies that \eqref{aoddcong3} and \eqref{aoddcong4} are satisfied.  Thus for $\gamma \in A_\alpha$, both $\tilde{u}_{\gamma,\tau} - u_{\tau}, \tilde{v}_{\gamma,\tau} - v_{\tau}\in \Z\tau +\Z$.    Conversely, assuming both $\tilde{u}_{\gamma,\tau} - u_{\tau}, \tilde{v}_{\gamma,\tau} - v_{\tau}\in \Z\tau +\Z$ gives us immediately that $\gamma \in \G^1(2)$, so $y$ is even and both of $x,w$ are odd.  Since $w$ is odd, \eqref{aoddcong4} implies that $Ay\equiv 0 \pmod{C}$ and thus $C\mid y$ forcing $\gamma \in \G^0(C)$.

We now consider specifically when $C\equiv 2 \pmod{4}$.  Since $x$ and $\frac{C}{2}$ are odd, \eqref{aoddcong3} forces that $z$ must be even, and so $\gamma \in \G_0(2)$.   Moreover, $z$ even means that \eqref{aoddcong3} implies that $A(x-1) \equiv 0 \pmod{C}$ and thus $$x\equiv 1 \pmod {C}.$$  By the fact that $\det\gamma =1$, we thus have that $\gamma\in \G^1(C)$.  Combining this with the fact that $\tilde{v}_{\gamma,\tau} - v_{\tau}\in \Z\tau +\Z$ implies $\gamma\in \G^1(2)$, we see that both $\tilde{u}_{\gamma,\tau} - u_{\tau}, \tilde{v}_{\gamma,\tau} - v_{\tau}\in \Z\tau +\Z$ implies that $\gamma\in \G_0(2) \cap \G^1(C)$ as desired.

Now, suppose $C\equiv 0\pmod{4}$. Here, we note that $\frac{C}{2}$ is even so we no longer have any restrictions on the parity of $z$. If $z$ is even, then similarly to the case when $C\equiv 2\pmod{4}$, we know that $x,w \equiv 1\pmod{C}$ and both $\tilde{u}_{\gamma,\tau} - u_{\tau}, \tilde{v}_{\gamma,\tau} - v_{\tau}\in \Z\tau +\Z$ implies that $\gamma\in \G_0(2) \cap \G^1(C)$ as desired.  If $z$ is odd however, then taking the inverse of $A$ in $(\Z/C\Z)^\times$, we can rewrite (\ref{aoddcong3}) as 
\[
x\equiv 1+A^{-1}\frac{C}{2} \equiv 1+\frac{C}{2}\pmod{C}.
\] 
Thus $x\equiv 1 \pmod{\frac{C}{2}}$, and since $\det\gamma =1$, it follows that $w\equiv 1 \pmod{\frac{C}{2}}$ also.  So in this case we must have that $\gamma \in \G_2 \backslash \G_3 \subseteq \G^1(2)$, as defined in \eqref{GammaDefs}. Therefore, both $\tilde{u}_{\gamma,\tau} - u_{\tau}, \tilde{v}_{\gamma,\tau} - v_{\tau}\in \Z\tau +\Z$ implies that $\gamma \in \G_1 \cup (\G_2 \backslash \G_3)$ as desired.
\end{proof}





We now prove Theorem \ref{mockmodularity}.

\begin{proof}[Proof of Theorem \ref{mockmodularity}]
Let $u_\tau:= 2\alpha = \frac{A}{C} \tau + \frac{a}{2}$, $v_\tau:= \tau/2$, so that $$V_\alpha(\tau) = i^{a+1}q^{-\frac{(2A-C)^2}{8C^2}} \mu\left(u_\tau, v_\tau; \tau\right).$$
To show $V_{\alpha}$ is a mock modular form, we need to show that its completion $\widehat{V}_{\alpha}(\tau)$, as given in \eqref{Vhatdef}, is a harmonic Maass form as defined in Definition \ref{MaassForm}.  

Let $\gamma = \smmat{x}{y}{z}{w}\in A_{\alpha}$, and $\tau\in\H$. By Lemma \ref{mockgpthm}, we can write $\tilde{u}_{\gamma,\tau} = u_\tau +k_\gamma \tau + \ell_\gamma$ and $\tilde{v}_{\gamma,\tau} = v_\tau + r_\gamma \tau + s_\gamma$, where $k_\gamma,\ell_\gamma,r_\gamma,s_\gamma\in\Z$. Thus, by \eqref{tildedef} and Lemma \ref{muhattrans}, we have that  
\begin{multline*}
\widehat{\mu}(u_{\gamma\tau},v_{\gamma\tau};\gamma\tau) 
= \psi(\gamma)^{-3}(-1)^{k_\gamma+\ell_\gamma+r_\gamma+s_\gamma} (z \tau + w)^{\frac12}\\
\cdot q^{\frac{(k_\gamma - r_\gamma)^2}{2}}  \cdot e\left(\frac{-z(\tilde{u}_{\gamma,\tau} - \tilde{v}_{\gamma,\tau})^2}{2(z\tau+w)} + (k_\gamma - r_\gamma)(u_\tau-v_\tau)\right)\widehat{\mu}(u_\tau,v_\tau;\tau),
\end{multline*} 
where $\psi(\gamma)$ is as defined in \eqref{psidef}. Thus, we can write 
\begin{equation}\label{Vhattransdef}
\widehat{V}_{\alpha}(\gamma\tau) = \left(\psi(\gamma)^{-3}(-1)^{k_\gamma + \ell_\gamma + r_\gamma + s_\gamma} \phi_{\gamma,\tau}\right)(z\tau + w)^{\frac12} \widehat{V}_{\alpha}(\tau),
\end{equation}
where 
\begin{equation}\label{phidef}
\phi_{\gamma,\tau} := e\left(\frac{-(2A-C)^2}{8C^2}\gamma\tau\right) e\left( \frac{-z}{2(z\tau+w)}(\tilde{u}_{\gamma,\tau}-\tilde{v}_{\gamma,\tau})^2\right) e\left( (u_\tau-v_\tau)(k_\gamma-r_\gamma)\right) q^{\frac{1}{2} (k_\gamma - r_\gamma)^2} q^{\frac{(2A-C)^2}{8C^2}}. 
\end{equation}

To see that $\phi_{\gamma,\tau}$ is in fact a root of unity, we need to simplify the expression in \eqref{phidef} and observe that all terms containing $\tau$ cancel.  First observe that $k_\gamma - r_\gamma$ can be determined as the coefficient of $\tau$ in $(\tilde{u}_{\gamma,\tau}-\tilde{v}_{\gamma,\tau}) - (u_\tau-v_\tau)$.  Thus,
\begin{equation}\label{k-r}
k_\gamma - r_\gamma = \frac{(2A-C)}{2C}(x-1) + \frac{a}{2}z.
\end{equation}
From \eqref{k-r} we see that 
\begin{multline*}
q^{\frac{1}{2} (k_\gamma - r_\gamma)^2}  q^{\frac{(2A-C)^2}{8C^2}} = \\
e\left( \frac{(2A-C)^2}{8C^2} x^2 \tau - \frac{(2A-C)^2}{4C^2} x\tau  + \frac{(2A-C)^2}{4C^2} \tau  + \frac{a(2A-C)}{4C} xz\tau - \frac{a(2A-C)}{4C} z\tau + \frac{a^2z^2}{8}\tau \right) 
\end{multline*}
and also that
\begin{multline*}
e\left( (u_\tau-v_\tau)(k_\gamma-r_\gamma)\right) = \\ 
e\left( \frac{(2A-C)^2}{4C^2} x\tau  - \frac{(2A-C)^2}{4C^2} \tau + \frac{a(2A-C)}{4C}\ z\tau \right) e\left(\frac{a(2A-C)}{4C} (x-1) + \frac{a^2}{4} z \right).
\end{multline*}
Thus together we obtain that
\begin{multline}\label{eq345}
e\left( (u_\tau-v_\tau)(k_\gamma-r_\gamma)\right)q^{\frac{1}{2} (k_\gamma - r_\gamma)^2}  q^{\frac{(2A-C)^2}{8C^2}} = \\
e\left( \frac{(2A-C)^2}{8C^2} x^2 \tau + \frac{a(2A-C)}{4C} xz\tau + \frac{a^2}{8}z^2\tau \right) e\left(\frac{a(2A-C)}{4C} (x-1) + \frac{a^2}{4} z \right).
\end{multline}
Moreover, the first two terms in \eqref{phidef} also simplify using the fact that $\det\gamma =1$, yielding that
\begin{multline}\label{eq12}
e\left(\frac{-(2A-C)^2}{8C^2}\gamma\tau\right) e\left( \frac{-z}{2(z\tau+w)}(\tilde{u}_{\gamma,\tau}-\tilde{v}_{\gamma,\tau})^2\right) = \\
e\left( -\frac{x(2A-C)^2}{8C^2}(x\tau+y) - \frac{az(2A-C)}{4C}(x\tau+y) - \frac{a^2z}{8}(z\tau+w)\right) = \\
e\left(-\frac{(2A-C)^2}{8C^2}x^2\tau - \frac{a(2A-C)}{4C} xz\tau - \frac{a^2}{8}z^2\tau \right) e\left( -\frac{(2A-C)^2}{8C^2}xy - \frac{a(2A-C)}{4C} yz - \frac{a^2}{8}zw \right).
\end{multline}
Thus by \eqref{phidef}, \eqref{eq345}, and \eqref{eq12}, and again using the fact that $\det\gamma =1$, we have that
\begin{align}
\phi_{\gamma,\tau} = e\left( -\frac{(2A-C)^2}{8C^2}xy -\frac{a(2A-C)}{4C}x(w-1) -\frac{a^2}{8}z(w-2) \right) ,
\end{align}
which is clearly a root of unity.

Since $\phi_{\gamma,\tau}$ is a root of unity, \eqref{Vhattransdef} shows that $\widehat{V}_{\alpha}(\tau)$ obeys a modular transformation law with a suitable multiplier.  

To study the growth of $\widehat{V}_{\alpha}(\tau)$ at the cusp $\tau= \frac{x}{z}$, where $x,z$ are integers with $\gcd(x,z)=1$, we let $\smmat{x}{y}{z}{w}\in \rm{SL}_2(\mathbb{Z})$ and study the growth of $\widehat{V}_{\alpha}(\gamma \tau)$ at infinity.   Using Lemma \ref{zwemutrans} (2), and simplifying significantly, we have that
\begin{multline*}
\widehat{V}_{\alpha}(\gamma \tau) =  i^{a+1} e\left(\frac{-(2A-C)^2}{8C^2} \gamma \tau \right) e\left(\frac{-z \left( (\frac{A}{C}- \frac12 )(x\tau+y) + \frac{a}{2}(z\tau+w) \right)^2}{2(z\tau+w)} \right) \psi(\gamma)^{-3} (z\tau+w)^{\frac12} \\
\cdot \widehat{\mu}\left( \frac{A}{C}(x\tau+y) + \frac{a}{2} (z\tau+w), \frac12(x\tau+y) ; \tau \right) \\
= i^{a+1}e\left( \frac{a(2A-C)}{8C} - \frac{(2A-C)^2}{8C^2} x(x\tau+y) - \frac{a(2A-C)}{8C} x(z\tau+w) - \frac{a^2}{8} z(z\tau+w) \right) \\
\cdot \psi(\gamma)^{-3}  q^{\frac{-(x(2A-C)+zaC)^2}{8C^2}} (z\tau+w)^{\frac12} \cdot \widehat{\mu}\left( \left( \frac{Ax}{C} + \frac{az}{2}\right) \tau + \left(\frac{Ay}{C} + \frac{aw}{2} \right) , \frac{x}{2}\tau + \frac{y}{2} ; \tau \right)\\
=i^{a+1}e\left( \frac{-(xy(2A-C)^2 + yzaC(2A-C) + zwa^2C^2)}{8C^2} \right) \psi(\gamma)^{-3} \\
\cdot q^{\frac{-(x(2A-C)+zaC)^2}{8C^2}} (z\tau+w)^{\frac12} \cdot \widehat{\mu}\left( \left( \frac{Ax}{C} + \frac{az}{2}\right) \tau + \left(\frac{Ay}{C} + \frac{aw}{2} \right) , \frac{x}{2}\tau + \frac{y}{2} ; \tau \right),
\end{multline*}
which we observe is a constant multiple of a function of the form in \eqref{hmfgrowth}. Thus  $\widehat{V}_{\alpha}(\tau)$ meets the prescribed growth conditions of a harmonic Maass form.

Lastly, we need to show that $\widehat{V}_{\alpha}(\tau)$ is annihilated by the Laplacian $\Delta_{\frac12}$. This follows directly from Lemma \ref{LaplaceLem} after observing that 
\[
\widehat{V}_{\alpha}(\tau) = - \frac{i^{a+1}}{\sqrt{2}} e\left(  \left( \frac{A}{C} - \frac12 \right) \left( \frac{a-1}{2} \right) \right) \widehat{M}_{(\frac{A}{C} - \frac12 ), -\frac{a}{2} }(\tau),
\]
where $\widehat{M}_{a,b}(\tau)$ is defined in \eqref{Mabdef}.  

\end{proof}

\section{Quantum Modularity}\label{quantum}

We call a subset $S\subseteq\mathbb{Q}$ a {\it quantum set} for a function $F$ with respect to the group $G\subseteq \rm{SL}_2(\mathbb{Z})$ if both $F(x)$ and $F(Mx)$ exist (are non-singular) for all $x\in S$ and $M\in G$.  We first establish quantum sets and corresponding groups for our functions $V_\alpha$ defined in Definition \ref{deffunction}.  We make the following definitions, recalling the definition of $S$ in \eqref{S_def},

\begin{align}\label{def:moreS}
 S_{C1} &= \left\{\frac{h}{k} \in S\, \bigg|\, C \nmid h \right\},  \nonumber \\
 S_{C2} &= \left\{\frac{h}{k} \in S\, \bigg|\, C \nmid 2h\right\}, \\
S_{ev} &= \left\{\frac{h}{k} \in S_{C1}\, \bigg|\, k \equiv 0  \!\!\!\! \pmod{2} \right\}. \nonumber
\end{align}

We are now able to define our quantum sets and corresponding groups for $V_\alpha$.
\begin{definition}\label{def:S_a}
Fix $\alpha = \frac{A}{2C}\tau + \frac{a}{4}$, such that $A,a \in \Z$, $C\in \N$, $0 \leq a \leq 3$, and $\gcd(A,C)=1$. We define
\begin{equation}\label{sets}
  S_{\alpha} = \left \{
  \begin{aligned}
    &S_{C1}, && \text{if}\ a=0,2 \\
    &S_{C2} \cup S_{ev}, && \text{if}\ a = 1,3 .
  \end{aligned} \right. 
\end{equation}
\end{definition}

\begin{definition}\label{def:G_a}
Fix $\alpha = \frac{A}{2C}\tau + \frac{a}{4}$, such that $A,a \in \Z$, $C\in \N$, $0 \leq a \leq 3$, and $\gcd(A,C)=1$. We define
\[
G_{\alpha} = 
\begin{cases}
    &\left< \begin{pmatrix} 1 & 0\\ 1 & 1 \end{pmatrix}, \begin{pmatrix} 1 & C\\ 0 & 1 \end{pmatrix}   \right>  \text{ if}\ a \text{ even and C even,}\\
    &\left< \begin{pmatrix} 1 & 0\\ 2 & 1 \end{pmatrix}, \begin{pmatrix} 1 & C\\ 0 & 1 \end{pmatrix}  \right>  \text{ if}\ a \text{ odd and C even,} \\
   &\left< \begin{pmatrix} 1 & 0\\ 1 & 1 \end{pmatrix}, \begin{pmatrix} 1 & 2C\\ 0 & 1 \end{pmatrix}  \right>  \text{ if}\ a \text{ even and C odd,} \\
   &\left< \begin{pmatrix} 1 & 0\\ 2 & 1 \end{pmatrix}, \begin{pmatrix} 1 & 2C\\ 0 & 1 \end{pmatrix}  \right>  \text{ if}\ a \text{ odd and C odd.}\\  
\end{cases}
\]

\end{definition}

Before we can prove Theorem \ref{quantummodularity} we must establish the following.

\begin{theorem}\label{setsandgroups}
Let $\alpha = \frac{A}{2C}\tau + \frac{a}{4}$, such that $A,a \in \Z$, $C\in \N$, $0 \leq a \leq 3$, and $\gcd(A,C)=1$.  Then the set $S_{\alpha}$ is a quantum set for $V_{\alpha}$ with respect to the group $G_{\alpha}$. In particular, for all $\tau \in S_{\alpha}$ and $M \in G_{\alpha}$, 
\[
V_{\alpha}(\tau) = i^{a+1}e\left(\frac{-(C-2A)^2}{8C^2}\tau\right)\mu\left(2\alpha, \frac{\tau}{2}; \tau\right)
\]
and
\[
V_{\alpha}(M\tau) = i^{a+1}e\left(\frac{-(C-2A)^2}{8C^2}M\tau\right)\mu\left(2\alpha, \frac{M\tau}{2}; M\tau\right)
\]
are well-defined. 
\end{theorem}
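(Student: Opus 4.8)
The plan is to reduce the entire statement to the non-vanishing hypothesis of Lemma~\ref{lem:eta_vanishing}. First I would dispose of the two displayed formulas: since $q^{-(2A-C)^2/(8C^2)} = e\!\left(-\tfrac{(2A-C)^2}{8C^2}\tau\right)$ and $(2A-C)^2=(C-2A)^2$, Definition~\ref{deffunction} gives the formula for $V_\alpha(\tau)$ at once, and substituting $M\tau$ for $\tau$ throughout yields the companion formula for $V_\alpha(M\tau)$ (with $2\alpha$ and $\tfrac{\tau}{2}$ understood at $M\tau$). Thus ``well-defined'' means precisely that the relevant value of $\mu$ is finite, and by Lemma~\ref{lem:eta_vanishing}, applied at the rational point in question, it suffices to verify for every $\tfrac hk$ in the relevant set and every $n\ge 1$ that $\tfrac{nh}{k}\pm\left(\tfrac{Ah}{Ck}+\tfrac a2\right)\notin\Z$. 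The remaining hypothesis of that lemma, that $\tfrac{A}{2C}$ and $\tfrac a4$ are not both in $\tfrac12\Z$, is automatic under $\gcd(A,C)=1$ together with the standing condition $2\alpha\notin\Z\tau+\Z$.

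Next I would check this non-vanishing condition on $S_\alpha$ itself. Clearing denominators, a putative integrality forces $C\mid h(Cn\pm A)$ when $a$ is even and $C\mid 2h(Cn\pm A)$ when $a$ is odd; reducing modulo $C$ and using $\gcd(A,C)=1$ collapses each to a divisibility condition on $h$ alone, namely $C\mid h$, respectively $C\mid 2h$. For $a$ even this is impossible since $\tfrac hk\in S_{C1}$ gives $C\nmid h$. For $a$ odd I would split on $C\bmod 4$: on $S_{C2}$, and for all of $S_\alpha$ when $C$ is odd, the defining condition $C\nmid 2h$ finishes it; when $C\equiv 0\pmod 4$ the requirement $\tfrac C2\mid h$ is incompatible with $h$ odd; and the delicate case $C\equiv 2\pmod 4$ on $S_{ev}$ is exactly where the evenness of $k$ enters, since there integrality would force the even number $k$ to divide $Cn\pm A$, which is odd because $A$ is odd (as $C$ is even). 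This shows $V_\alpha$ is non-singular on all of $S_\alpha$.

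Finally, to reach $V_\alpha(M\tau)$ I would show that $G_\alpha$ carries $S_\alpha$ into $S_\alpha\cup\{i\infty\}$, so that the previous paragraph applies to $M\tau$ verbatim. Since each $G_\alpha$ is generated by one lower-unipotent matrix $\smmat{1}{0}{c}{1}$ with $c\in\{1,2\}$ and one translation $\smmat{1}{t}{0}{1}$ with $t\in\{C,2C\}$, and since $\SL_2(\Z)$ sends reduced fractions to reduced fractions, it is enough to follow the numerator and denominator of $M\!\left(\tfrac hk\right)=\tfrac{h}{ch+k}$ and $\tfrac{h+tk}{k}$ through the definitions: in each case the numerator is odd and keeps the same divisibility by $C$ of itself and of twice itself (because $t$ is a multiple of $C$ and $\smmat{1}{0}{c}{1}$ leaves the numerator $h$ unchanged), while the denominator changes parity in a controlled way ($ch+k$ shares the parity of $k$ when $c$ is even, and $h+tk$ is odd). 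Matching these against $S_{C1}$, $S_{C2}$, $S_{ev}$ case by case shows each generator preserves $S_\alpha$, the only escape being $ch+k=0$, i.e. $\tau=-1$ under $\smmat{1}{0}{1}{1}$ (for $a$ even) or $\tau=-\tfrac12$ under $\smmat{1}{0}{2}{1}$ (for $a$ odd), both of which land at $i\infty$. There $V_\alpha$ takes a finite value, read off from Kang's formula (Theorem~\ref{kangthm}) in the limit $q\to 0$, so $V_\alpha(M\tau)$ is non-singular in every case.

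The step I expect to be the main obstacle is the non-vanishing verification of the second paragraph, and in particular the interlocking of the two pieces $S_{C2}$ and $S_{ev}$ with the value of $C\bmod 4$. This is precisely why, for odd $a$, the quantum set is taken to be the union $S_{C2}\cup S_{ev}$ rather than a single congruence set, and why the generator $\smmat{1}{0}{2}{1}$, which preserves the parity of the denominator, is the correct choice: the same parity bookkeeping must simultaneously make the non-vanishing hold on $S_\alpha$ and make $S_\alpha$ invariant under $G_\alpha$. Once Lemma~\ref{lem:eta_vanishing} and this bookkeeping are in place, the remainder is formal.
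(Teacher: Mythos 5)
Your proposal is correct and follows essentially the same route as the paper: reduce well-definedness to the non-vanishing hypothesis of Lemma~\ref{lem:eta_vanishing}, clear denominators and use $\gcd(A,C)=1$ to verify that hypothesis case-by-case on $S_{C1}$, $S_{C2}$, and $S_{ev}$, and then check that the generators (and inverses) of $G_\alpha$ preserve $S_\alpha$. The only substantive deviations are cosmetic: in the delicate case $a$ odd, $C\equiv 2\pmod 4$, $\tfrac hk\in S_{ev}$ you cancel the factor of $2$ coming from $C$ and contradict the parity of $k\mid Cn\pm A$, whereas the paper cancels the factor of $2$ coming from $k$ and contradicts $C\nmid h$ (both are valid), and you explicitly flag the boundary points such as $\tau=-1$ and $\tau=-\tfrac12$ that a generator sends to $i\infty$ --- a case the paper's proof silently omits and only excludes later in the statements of the transformation laws, so your (admittedly heuristic) appeal to Theorem~\ref{kangthm} there is, if anything, more careful than the original.
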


\begin{proof}

By Lemma \ref{lem:eta_vanishing}, in order to show that $V_{\alpha}(\tau)$ is well-defined for $\tau=\frac{h}{k} \in S_{\alpha}$, it suffices to show that for all $n\in \N$,
\[
 \frac{nh}{k} \pm \left( \frac{Ah}{Ck} + \frac{a}{2}\right) \not\in \mathbb{Z}.
\]
Equivalently, we show that for any $n\in \N$, $r \in \mathbb{Z}$, the following identities do not hold
\begin{align}\label{eqn:cont}
Ck(2r - a) = 2h(Cn + A)\\
Ck(2r + a) = 2h(Cn - A).
\end{align}

We first consider the case when $a=0$. Suppose $\frac{h}{k} \in S_{C1}$, and there exists an $n\in \N$ and $r \in \mathbb{Z}$ such that 
\begin{equation}\label{eqn:a=0}
Ckr = h(Cn \pm A).
\end{equation}
Then $C$ must divide the right-hand side of \eqref{eqn:a=0}, which gives a contradiction, since $C \nmid h$ by definition of $S_{C1}$, and $C\nmid A$ since they are relatively prime.

We next consider the case when $a = 2$. Suppose $\frac{h}{k} \in S_{C1}$, and there exists an $n\in \N$ and $r \in \mathbb{Z}$ such that 
\begin{equation}\label{eqn:a=2}
Ck(r \mp 1) = h(Cn \pm A).
\end{equation}
Then similarly $C$ must divide the right-hand side of \eqref{eqn:a=2}, which gives a contradiction, since $C \nmid h$ by definition of $S_{C1}$, and $C\nmid A$ since they are relatively prime.

We now consider the case when $a = 1,3$. Suppose $\frac{h}{k} \in S_{C2} \cup S_{ev}$, and there exists an $n\in \N$ and $r \in \mathbb{Z}$ such that
\begin{equation}\label{eqn:a=1,3}
Ck(2r \mp a) = 2h(Cn \pm A).
\end{equation}
Then again $C$ must divide the right-hand side of \eqref{eqn:a=1,3}, and we know $C\nmid A$ since they are relatively prime.  If $\frac{h}{k} \in S_{C2}$, then $C \nmid 2h$, which gives a contradiction. 
If $\frac{h}{k} \in S_{ev}$, then we have both that $C \nmid h$ and $k$ is even. Thus we have a contradiction since we can cancel the factor of $2$ on the right-hand side of \eqref{eqn:a=1,3} due to $k$ being even.

Since we have shown that $V_{\alpha}(\tau)$ is well-defined for $\tau=\frac{h}{k} \in S_{\alpha}$, to see that $V_{\alpha}(M\tau)$ is well-defined for $\tau=\frac{h}{k} \in S_{\alpha}$ and $M \in G_{\alpha}$, it suffices to show that if $\tau=\frac{h}{k} \in S_{\alpha}$ and $M$ is a generator or inverse of a generator of $G_{\alpha}$, then $M\tau\in S_\alpha$.

First, consider when $M_r = \left( \begin{smallmatrix} 1 & 0\\ r & 1 \end{smallmatrix} \right)$ for $r \in \{1,2\}$ is a generator of $G_\alpha$ as in Definition \ref{def:G_a}. Then, 
\[
M_r \frac{h}{k} = \frac{h}{rh + k}.
\]

We note that $h$ and $rh + k$ are relatively prime since $h$ and $k$ are relatively prime.
If $\tau=\frac{h}{k} \in S_{C1}$, then $h$ is odd and $C \nmid h$. Therefore, $M_r \tau \in S_{C1}$ as well. 

If $\frac{h}{k} \in S_{C2} \cup S_{ev}$, then by Definitions \ref{def:S_a} and \ref{def:G_a} we only need to consider when $r=2$.  If $\tau=\frac{h}{k} \in S_{C2}$, then $h$ is odd and $C \nmid 2h$. So, $M_r\tau \in S_{C2}$ as well. If $\tau=\frac{h}{k} \in S_{ev}$, then $k$ is even.  Since $r=2$ we have $rh + k$ is even. Thus if $\tau=\frac{h}{k} \in S_{C2} \cup S_{ev}$, then $M_r\tau \in S_{C2} \cup S_{ev}$ as well.
We can argue similarly to show that for $\tau\in S_{\alpha}$, $M_r^{-1} \tau \in S_{\alpha}$.

We now consider when $T_r = \left( \begin{smallmatrix} 1 & r\\ 0 & 1 \end{smallmatrix} \right)$  for $r \in \{C,2C\}$ is a generator of $G_\alpha$ as in Definition \ref{def:G_a}. Then, 
\[
T_r \frac{h}{k} = \frac{h + rk}{k}.
\]

Here also we have that $h + rk$ and $k$ are relatively prime since $h$ and $k$ are relatively prime.  If $\tau=\frac{h}{k} \in S_{C1}$, then we first note that $h+rk$ is again odd, since $h$ is odd, and $r$ must be even by Definitions \ref{def:S_a} and \ref{def:G_a}.  Moreover, $C\nmid h +rk$ since $C\nmid h$ and $C\mid r$.  Thus, $T_r\tau \in S_{C1}$.

If $\frac{h}{k} \in S_{C2} \cup S_{ev}$, then by Definitions \ref{def:S_a} and \ref{def:G_a} we know that $r$ is even.  If $\tau \in S_{ev}$, then $h + rk$ must be odd since $h$ is odd and $r$ is even. Also, $C\nmid h + rk$ since $C \nmid h$ and $C\mid r$. We also know $k$ is even. Therefore, $T_r\tau \in S_{ev}$.  If $\tau \in S_{C2}$, then $h + rk$ again remains odd since $h$ is odd and $r$ is even.  Because $C \nmid 2h$ and $C\mid r$, we must have $C \nmid 2(h+rk)$. Therefore, $T_r\tau \in S_{C2}$. Thus, if $\tau =\frac{h}{k}\in S_{C2} \cup S_{ev}$, then $T_r\tau \in S_{C2} \cup S_{ev}$.  We can argue similarly to show that for $\tau\in S_{\alpha}$, $T_r^{-1} \tau \in S_{\alpha}$.
\end{proof}

\begin{proposition}\label{ttransshift}
Let $\alpha = \frac{A}{2C}\tau + \frac{a}{4}$ where $a \in \{0,1,2,3\}$, $\mathrm{gcd}(A,C) = 1$, and $0 < \frac{A}{C} < 1$.  Additionally, let $T_r = \left( \begin{smallmatrix} 1 & r\\ 0 & 1 \end{smallmatrix} \right)$  for $r \in \{C,2C\}$ such that $T_r$ is a generator of $G_\alpha$ as in Definition \ref{def:G_a}.  Then for $\tau \in \mathbb{H}\cup S_{\alpha}$,
\[
V_\alpha(T_r\tau) = \zeta_8^{-r}(-1)^{\frac{Ar}{C}+\frac{r}{2}}e\left(-\frac{r}{2}\left(\frac{A}{C}-\frac{1}{2}\right)^2\right) V_\alpha(\tau).
\]
\end{proposition}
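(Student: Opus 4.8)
The plan is to reduce everything to the transformation properties of $\mu$ collected in Lemma \ref{lem_mu}, working first for $\tau\in\H$. Note that $T_r\tau=\tau+r$, and that since $T_r$ is a generator of $G_\alpha$ as in Definition \ref{def:G_a}, we have $r=C$ when $C$ is even and $r=2C$ when $C$ is odd. Using Definition \ref{deffunction} and $u_\tau=\tfrac{A}{C}\tau+\tfrac a2$, $v_\tau=\tfrac\tau2$, I would start by writing
\[
V_\alpha(\tau+r)=i^{a+1}e\!\left(-\tfrac{(2A-C)^2}{8C^2}(\tau+r)\right)\mu\!\left(\tfrac{A}{C}\tau+\tfrac{Ar}{C}+\tfrac a2,\ \tfrac\tau2+\tfrac r2;\ \tau+r\right),
\]
so that the first two arguments of $\mu$ differ from $u_\tau,v_\tau$ by $\tfrac{Ar}{C}$ and $\tfrac r2$ respectively, while the modular variable has been shifted by the integer $r$.

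The key observation is that for both admissible values of $r$ the quantities $\tfrac{Ar}{C}$ and $\tfrac r2$ are \emph{integers}: when $r=C$ they equal $A$ and $\tfrac C2$, and when $r=2C$ they equal $2A$ and $C$. This lets me peel off all three shifts using Lemma \ref{lem_mu}. First, iterating property (4) exactly $r$ times moves the modular variable from $\tau+r$ back to $\tau$ at the cost of $e^{-\pi i r/4}=\zeta_8^{-r}$. Then applying property (1) a total of $\tfrac{Ar}{C}$ times to the first slot and property (2) a total of $\tfrac r2$ times to the second slot produces the sign $(-1)^{\frac{Ar}{C}+\frac r2}$ and returns the arguments to $u_\tau,v_\tau$. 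Altogether
\[
\mu\!\left(\tfrac{A}{C}\tau+\tfrac{Ar}{C}+\tfrac a2,\ \tfrac\tau2+\tfrac r2;\ \tau+r\right)=\zeta_8^{-r}(-1)^{\frac{Ar}{C}+\frac r2}\,\mu\!\left(u_\tau,v_\tau;\tau\right).
\]
It remains to handle the prefactor: going from $\tau$ to $\tau+r$ multiplies $q^{-(2A-C)^2/(8C^2)}$ by $e\!\left(-\tfrac{(2A-C)^2}{8C^2}r\right)$, and since $\tfrac{2A-C}{2C}=\tfrac{A}{C}-\tfrac12$ this factor is exactly $e\!\left(-\tfrac r2\left(\tfrac{A}{C}-\tfrac12\right)^2\right)$. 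Combining these three factors with the unchanged $i^{a+1}$ recovers the stated formula, comparing against the expression for $V_\alpha(\tau)$ in Theorem \ref{setsandgroups}.

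The one genuinely delicate point, and the step I expect to require the most care, is extending the identity from $\tau\in\H$ to the rational arguments $\tau\in S_\alpha$, where the defining $q$-series for $\mu$ need not converge. There I would invoke Theorem \ref{setsandgroups}, which guarantees both $V_\alpha(\tau)$ and $V_\alpha(T_r\tau)$ are well-defined on $S_\alpha$, and then verify that the same transformation factors arise directly from the finite-sum $g_2$-representation of Lemma \ref{lem:eta_vanishing}; concretely, since $r$ is an integer the value $q=e^{2\pi i\tau}$ is unchanged while the half-integral exponents $e^{\pi i\tau}$ and $e^{2\pi i\alpha}$ pick up precisely the roots of unity already computed for the $\H$ case, so the finite sum transforms by the identical scalar. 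This confirms the formula on all of $\H\cup S_\alpha$.
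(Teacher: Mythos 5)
Your proposal is correct and follows essentially the same route as the paper: apply Lemma \ref{lem_mu} (1) a total of $\tfrac{Ar}{C}$ times, (2) a total of $\tfrac r2$ times, and (4) a total of $r$ times (noting these multiplicities are integers for the admissible $r$), track the prefactor via $\tfrac{(2A-C)^2}{8C^2}r=\tfrac r2\bigl(\tfrac AC-\tfrac12\bigr)^2$, and invoke Theorem \ref{setsandgroups} to extend to $S_\alpha$. Your added detail on the rational arguments via the finite $g_2$-sum is a harmless elaboration of what the paper leaves implicit.
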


\begin{proof}
Observe that by applying $Ar/C$ iterations of Lemma \ref{lem_mu} (1), $r/2$ iterations of Lemma \ref{lem_mu} (2), and $r$ iterations of Lemma \ref{lem_mu} (4), we obtain 
\[
\mu\left(\left(\frac{A}{C} (\tau + r)  + \frac{a}{2}\right), \frac{\tau + r}{2}; \tau + r \right) = \zeta_8^{-r}(-1)^{\frac{Ar}{C}+\frac{r}{2}}\mu\left(\left(\frac{A}{C} \tau + \frac{a}{2}\right), \frac{\tau}{2}; \tau\right).
\]
By Definition \ref{deffunction} together with Theorem \ref{setsandgroups}, we obtain the result.
\end{proof}

We now consider the matrices $M_r = \left( \begin{smallmatrix} 1 & 0\\ r & 1 \end{smallmatrix} \right)$ for $r \in \{1,2\}$ which are generators of $G_\alpha$ as in Definition \ref{def:G_a}.  The corresponding transformations for $M_r$ are much more complicated than those for $T_s$.  The next lemma provides the first step in the process.  For convenience, we make the following definition which we will use throughout,

\begin{equation}\label{TauDefn}
\tau_r := - \frac{1}{\tau} -r.
\end{equation}

\begin{lemma}\label{mprops}
Let $\alpha = \frac{A}{2C}\tau + \frac{a}{4}$ where $a \in \{0,1,2,3\}$, $\mathrm{gcd}(A,C) = 1$, and $0 < \frac{A}{C} < 1$.  Additionally, let $M_r = \left( \begin{smallmatrix} 1 & 0\\ r & 1 \end{smallmatrix} \right)$ for $r\in \{1,2\}$ such that $M_r$ is a generator of $G_\alpha$ as in Definition \ref{def:G_a}.  Then for $\tau \in \mathbb{H}$,
\[
V_\alpha(M_r\tau) = (-1)^{\frac{ar}{2}} e\left(\frac{(a^2+1)r}{8}\right) \sqrt{r\tau+1} \cdot V_{\alpha}(\tau) + I_{\alpha,r}(\tau)+J_{\alpha,r}(\tau),
\]
where
\begin{align*}
I_{\alpha,r}(\tau)&:= \frac12 \, e\left(\frac{aA}{2C}\right) e\left(\frac{-a^2\tau_r}{8} \right) \sqrt{-i\tau_r} \, h\left(\frac{a}{2}\tau_r-\left(\frac{A}{C}-\frac{1}{2}\right); \tau_r\right),\\
J_{\alpha,r}(\tau)&:= -\frac12 i^a (-1)^{\frac{ar}{2}} e\left(\frac{(a^2+1)r}{8}\right) \sqrt{r\tau+1} q^{-\frac{1}{2}\left(\frac{1}{2}-\frac{A}{C}\right)^2} h\left(\left(\frac12 - \frac{A}{C} \right)\tau -\frac{a}{2}; \tau\right).
\end{align*}
\end{lemma}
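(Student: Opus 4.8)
The plan is to compute $V_\alpha(M_r\tau)$ directly from Definition \ref{deffunction}, reducing everything to the modular transformation property of $\mu$ under $\tau\mapsto -1/\tau$ (Lemma \ref{lem_mu}(5)) together with the shift relations (Lemma \ref{lem_mu}(1),(2),(4)). The key observation is that $M_r\tau = \frac{\tau}{r\tau+1}$, and one should factor the action of $M_r$ through the inversion $\tau\mapsto-1/\tau$ so that Lemma \ref{lem_mu}(5) applies. Concretely, I would introduce $\tau_r := -\tfrac1\tau - r$ as in \eqref{TauDefn}, noting that $-1/\tau_r = \frac{\tau}{r\tau+1} = M_r\tau$, so evaluating $\mu$ at $M_r\tau = -1/\tau_r$ lets me use the $S$-transformation (5) with $\tau$ replaced by $\tau_r$.

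First I would write
\[
V_\alpha(M_r\tau) = i^{a+1} e\!\left(\tfrac{-(2A-C)^2}{8C^2}M_r\tau\right)\mu\!\left(\tfrac{A}{C}M_r\tau + \tfrac a2,\ \tfrac12 M_r\tau;\ M_r\tau\right),
\]
and substitute $M_r\tau = -1/\tau_r$ in both the arguments and the modulus of $\mu$. Applying Lemma \ref{lem_mu}(5) with modulus $\tau_r$ converts $\mu(u/\tau_r, v/\tau_r; -1/\tau_r)$ into a combination of $\mu(u,v;\tau_r)$ and the Mordell integral term $\tfrac{1}{2i}h(u-v;\tau_r)$, at the cost of the automorphy factor $\sqrt{-i\tau_r}\,e^{\pi i(u-v)^2/\tau_r}$. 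Here $u-v = \bigl(\tfrac AC - \tfrac12\bigr)(-1) \cdot(\text{scaling})$; the point is that $u-v$ scaled by $\tau_r$ produces exactly the argument $\tfrac a2\tau_r - (\tfrac AC - \tfrac12)$ appearing inside $h$ in the definition of $I_{\alpha,r}(\tau)$. The $\mu(u,v;\tau_r)$ piece then needs to be transported from modulus $\tau_r = -1/\tau - r$ back to modulus $\tau$: I would first undo the shift by $-r$ using $r$ iterations of Lemma \ref{lem_mu}(4) (each contributing $e^{-\pi i/4}=\zeta_8^{-1}$) together with the elliptic shifts (1),(2) to restore the arguments, landing at $\mu(\,\cdot\,; -1/\tau)$, and then apply Lemma \ref{lem_mu}(5) a second time to return to modulus $\tau$. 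This second inversion produces a second Mordell integral term, which becomes $J_{\alpha,r}(\tau)$, and a $\mu(\cdot;\tau)$ term which reassembles into $V_\alpha(\tau)$ after accounting for the prefactors.

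The bookkeeping of prefactors is where the main effort lies. I expect the dominant obstacle to be verifying that all the $\tau$-dependent exponential factors — the $q$-power in front of $V_\alpha$, the two automorphy factors $e^{\pi i(u-v)^2/(\cdot)}$ from the two applications of (5), and the $\zeta_8$ and $(-1)$ factors from the iterated shifts — combine to give precisely the stated scalar $(-1)^{ar/2}e\!\left(\tfrac{(a^2+1)r}{8}\right)\sqrt{r\tau+1}$ on the $V_\alpha(\tau)$ term, and the stated closed forms for $I_{\alpha,r}$ and $J_{\alpha,r}$. Matching the square-root branch, i.e.\ checking that $\sqrt{-i\tau_r}$ and $\sqrt{-i(-1/\tau)}$ multiply to $\sqrt{r\tau+1}$ with the correct branch of the square root on $\mathbb H$, requires care; I would handle this by tracking $\IM$ signs and using the standard principal-branch conventions consistent with Lemma \ref{lem_mu}. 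The parity conditions $0 < A/C < 1$ guarantee that the argument $\tfrac12 - \tfrac AC$ of $h$ in $J_{\alpha,r}$ lies in the admissible range, and I would invoke the evenness of $h$ (Lemma \ref{htrans}(3)) as needed to align signs between the two Mordell terms. Once the scalar and both integral terms are matched, the identity follows; extension from $\tau\in\mathbb H$ to $\tau\in S_\alpha$ is deferred and justified separately via Theorem \ref{setsandgroups}.
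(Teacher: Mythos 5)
Your proposal is correct and follows essentially the same route as the paper's proof: introduce $\tau_r=-\tfrac1\tau-r$ so that $M_r\tau=-1/\tau_r$, apply Lemma \ref{lem_mu}(5) at modulus $\tau_r$ to produce $I_{\alpha,r}$, transport the resulting $\mu(\cdot;\tau_r)$ to modulus $-1/\tau$ via the shift relations (1) and (4) (using that $\tfrac{ar}{2}\in\Z$ by the choice of $r$ in Definition \ref{def:G_a}), and apply (5) a second time to produce $J_{\alpha,r}$ and reassemble $V_\alpha(\tau)$, with $\sqrt{-i\tau_r}\sqrt{-i\tau}=\sqrt{r\tau+1}$. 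The remaining work is exactly the prefactor bookkeeping you identify, carried out in the paper's displays \eqref{muStep1} and \eqref{muStep2}.
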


\begin{proof}
Observe that by definition of $\tau_r$, $M_r \tau= S\tau_r = - \frac{1}{\tau_r}$.  Using Lemma \ref{lem_mu} (5) and the definition of $I_{\alpha,r}(\tau)$ above, we obtain
\begin{multline}\label{muStep1}
V_\alpha(M_r\tau) = i^{a+1} e\left(  \frac{1}{2\tau_r} \left( \frac{A}{C} - \frac12 \right)^2 \right) \cdot \mu\left( \frac{\frac{a}{2} \tau_r -\frac{A}{C}}{\tau_r} ,\frac{-\frac12}{\tau_r}; \frac{-1}{\tau_r}\right) \\
= -i^{a+1} e\left(\frac{a}{2} \left(\frac{A}{C} - \frac12 \right) \right) e\left(\frac{-a^2\tau_r}{8} \right)  \sqrt{-i\tau_r} \mu\left( \frac{a}{2} \tau_r - \frac{A}{C} , \frac{-1}{2}; \tau_r\right) + I_{\alpha,r}(\tau)\\
= -i^{a+1} e\left(\frac{a^2 r}{8} + \frac{a}{2} \left(\frac{A}{C} - \frac12 \right) \right) e\left(\frac{a^2}{8\tau} \right)  \sqrt{-i\tau_r} \mu\left( \frac{a}{2} \tau_r - \frac{A}{C} , \frac{-1}{2}; \tau_r\right) + I_{\alpha,r}(\tau).
\end{multline} 

Note that by Definition \ref{def:G_a}, we have that $\frac{ar}{2}$ is a positive integer.  By Lemma \ref{lem_mu} parts (1) and (4), we can write
\begin{equation}\label{muStep2}
\mu\left(\frac{a}{2} - \frac{A}{C}, \frac{-1}{2}; \tau_r  \right) = (-1)^{\frac{ar}{2}} e\left(\frac{r}{8}\right) \mu\left(  \frac{-a}{2\tau} - \frac{A}{C}, \frac{-1}{2} ; \frac{-1}{\tau}\right).
\end{equation}
Further applying Lemma \ref{lem_mu} (5) and Lemma \ref{lem_mu} (3) to \eqref{muStep2} and simplifying, we obtain 
\begin{multline*}
\mu\left(\frac{a}{2} - \frac{A}{C}, \frac{-1}{2}; \tau_r  \right) = (-1)^{\frac{ar}{2}} e\left(\frac{r}{8}\right) e\left( -\frac{a}{2} \left( \frac{A}{C} - \frac{a}{2} \right)\right) e\left(\frac{-a^2}{8\tau} \right) q^{-\frac12 \left( \frac{A}{C} - \frac12 \right)^2} \sqrt{-i\tau} \\
\cdot \left[ \frac{1}{2i} h \left(\left( \frac12 - \frac{A}{C} \right) \tau - \frac{a}{2} ; \tau \right) - \mu\left( \frac{A}{C} \tau + \frac{a}{2}, \frac{\tau}{2} ; \tau \right) \right].
\end{multline*}
Plugging this into \eqref{muStep1} and observing that $\sqrt{-i\tau_r}\cdot \sqrt{-i\tau} = \sqrt{r\tau + 1}$, we see that
\[
V_\alpha(M_r\tau) = (-1)^{\frac{ar}{2}} e\left(\frac{(a^2+1)r}{8}\right) \sqrt{r\tau+1} \cdot V_{\alpha}(\tau) + I_{\alpha,r}(\tau)+J_{\alpha,r}(\tau),
\]
as desired.
\end{proof}

In order to analyze $I_{\alpha,r}(\tau)+J_{\alpha,r}(\tau)$, we make the following definition. 

\begin{definition}\label{DeltaDef}
Let $x,y \in \C$, 
and $\tau \in \mathbb{H}$; then
\[
\delta_{x,y}(\tau) := \int_0^{i\infty}\frac{g_{x+\frac{1}{2},y+\frac{1}{2}}(z)}{\sqrt{-i(z+\tau)}}\,dz +e\left(x\left(y+\frac{1}{2}\right)\right)q^{-\frac{x^2}{2}}h\left(x\tau-y;\tau\right).
\]
\end{definition}

\noindent With this definition, we are able to prove the following lemma, which is helpful in the proof of Theorem \ref{quantummodularity}.

\begin{lemma}\label{DeltaLem}
Let $x,y\in \mathbb{R}$ and $\tau\in\mathbb{H}$.  Then $\delta_{x,y}(\tau)$ satisfies
\begin{align*}
\delta_{x,y+1}(\tau) &= -e^{2\pi i x}\delta_{x,y}(\tau)+\frac{2e^{2\pi i x}}{\sqrt{-i\tau}}e^{\pi i (y+\frac{1}{2})^2/\tau},\\
\delta_{x+1,y}(\tau) &= \delta_{x,y}(\tau)-2e^{\pi i(x+y)}e^{2\pi i xy}q^{-(x+\frac{1}{2})^2/2}. 
\end{align*}
\end{lemma}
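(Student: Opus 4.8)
The plan is to prove each transformation formula for $\delta_{x,y}(\tau)$ by splitting the defining expression in Definition \ref{DeltaDef} into its two summands---the integral of $g_{x+\frac12,y+\frac12}$ and the $h$-term---and applying the relevant quasi-periodicity relations to each summand separately. For the first formula, I would shift $y$ to $y+1$ and track how both pieces respond. In the integrand, the function $g_{x+\frac12,y+\frac12}$ becomes $g_{x+\frac12,y+\frac32}$, and by Lemma \ref{lem_gabprop}(2) we have $g_{a,b+1}(z)=e^{2\pi i a}g_{a,b}(z)$, so the integral simply picks up a factor of $e^{2\pi i(x+\frac12)}=-e^{2\pi ix}$. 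In the $h$-term, the exponential prefactor $e(x(y+\frac12))q^{-x^2/2}$ becomes $e(x(y+\frac32))q^{-x^2/2}=e^{2\pi ix}\cdot e(x(y+\frac12))q^{-x^2/2}$, while the argument of $h$ shifts from $x\tau-y$ to $x\tau-y-1$. Here I would invoke Lemma \ref{htrans}(1), the relation $h(z)+h(z+1)=\frac{2}{\sqrt{-i\tau}}e^{\pi i(z+\frac12)^2/\tau}$, with $z=x\tau-y-1$ to rewrite $h(x\tau-y-1)$ in terms of $h(x\tau-y)$ plus an explicit theta-type correction.

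Combining these two computations, the integral contributes $-e^{2\pi ix}$ times the original integral, the $h$-term contributes $e^{2\pi ix}$ times the original prefactor times the relation from Lemma \ref{htrans}(1), and the bookkeeping should assemble into $-e^{2\pi ix}\delta_{x,y}(\tau)$ plus the single remaining correction term $\frac{2e^{2\pi ix}}{\sqrt{-i\tau}}e^{\pi i(y+\frac12)^2/\tau}$. The key subtlety is making sure the exponential arising from Lemma \ref{htrans}(1), evaluated at the shifted argument, simplifies to exactly $e^{\pi i(y+\frac12)^2/\tau}$ after multiplying by all the prefactors; I expect the $q^{-x^2/2}$ and the $e^{2\pi ix}$ factors to conspire with the $e^{\pi i(x\tau-y-\frac12)^2/\tau}$ to collapse the $x$-dependence, which is the step requiring the most care.

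For the second formula, I would shift $x$ to $x+1$ instead. In the integrand $g_{x+\frac32,y+\frac12}$ equals $g_{x+\frac12,y+\frac12}$ by Lemma \ref{lem_gabprop}(1), the periodicity $g_{a+1,b}=g_{a,b}$, so the integral is unchanged. The $h$-term now requires the $\tau$-direction relation: the prefactor $e((x+1)(y+\frac12))q^{-(x+1)^2/2}$ and the argument $(x+1)\tau-y=(x\tau-y)+\tau$ invite Lemma \ref{htrans}(2), namely $h(z)+e^{-2\pi iz-\pi i\tau}h(z+\tau)=2e^{-\pi iz-\pi i\tau/4}$, applied with $z=x\tau-y$. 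Solving for $h(x\tau-y+\tau)$ and substituting, the main body should reproduce $\delta_{x,y}(\tau)$ while the inhomogeneous piece $2e^{-\pi iz-\pi i\tau/4}$ yields the correction. The main obstacle here, as before, is the exponent arithmetic: I would need to verify that $e((x+1)(y+\frac12))q^{-(x+1)^2/2}$ times the various factors of $e^{-2\pi iz-\pi i\tau}$ and $e^{-\pi iz-\pi i\tau/4}$ reduces cleanly to the stated $-2e^{\pi i(x+y)}e^{2\pi ixy}q^{-(x+\frac12)^2/2}$, and in particular that the sign works out correctly. Throughout, I would keep both summands of $\delta$ aligned in parallel so that the shared integral term cancels or persists transparently, leaving only the explicit closed-form correction terms.
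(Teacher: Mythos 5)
Your proposal is correct and follows exactly the paper's own proof: split $\delta$ into its integral and $h$ summands, apply Lemma \ref{lem_gabprop}(2) with Lemma \ref{htrans}(1) for the $y$-shift, and Lemma \ref{lem_gabprop}(1) with Lemma \ref{htrans}(2) for the $x$-shift, then simplify the exponentials. The exponent bookkeeping you flag as the delicate step does indeed collapse as you expect, yielding the stated correction terms.
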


\begin{proof}
By Definition \ref{DeltaDef}, we have
$$
\delta_{x,y+1}(\tau) = \int_0^{i\infty}\frac{g_{x+\frac{1}{2},y+\frac{1}{2}+1}(z)}{\sqrt{-i(z+\tau)}}\,dz +e\left(x\left(y+\frac{1}{2}+1\right)\right)q^{-\frac{x^2}{2}}h\left(x\tau-y-1;\tau\right).
$$
Applying Lemma \ref{lem_gabprop} (2) and Lemma \ref{htrans} (1), we obtain
\begin{multline*}
\delta_{x,y+1}(\tau) =-e^{2\pi ix}\int_0^{i\infty}\frac{g_{x+\frac{1}{2},y+\frac{1}{2}}(z)}{\sqrt{-i(z+\tau)}}\,dz -e^{2\pi i x}e\left(x\left(y+\frac{1}{2}\right)\right)q^{-\frac{x^2}{2}}h\left(x\tau-y;\tau\right)\\
+\frac{2}{\sqrt{-i\tau}}e^{\pi i\left(x\tau-y-1/2\right)^2/\tau}e^{2\pi ix\left(y+\frac{1}{2}+1\right)}q^{-\frac{x^2}{2}}\\
=-e^{2\pi i x}\delta_{x,y}(\tau)+\frac{2e^{2\pi i x}}{\sqrt{-i\tau}}e^{\pi i (y+\frac{1}{2})^2/\tau}.
\end{multline*}
By Definition \ref{DeltaDef}, we similarly have
$$
\delta_{x+1,y}(\tau) = \int_0^{i\infty}\frac{g_{x+\frac{1}{2}+1,y+\frac{1}{2}}(z)}{\sqrt{-i(z+\tau)}}\,dz +e\left((x+1)\left(y+\frac{1}{2}\right)\right)q^{-\frac{(x+1)^2}{2}}h\left(x\tau-y+\tau;\tau\right).
$$
Applying Lemma \ref{lem_gabprop} (1) and Lemma \ref{htrans} (2), we have
\begin{multline*}
\delta_{x+1,y}(\tau) =\int_0^{i\infty}\frac{g_{x+\frac{1}{2},y+\frac{1}{2}}(z)}{\sqrt{-i(z+\tau)}}\,dz +e\left(x\left(y+\frac{1}{2}\right)\right)q^{-\frac{x^2}{2}}h\left(x\tau-y;\tau\right)\\
-2e^{\pi iy}e^{2\pi i x\left(y+\frac{1}{2}\right)}q^{-\frac{1}{2}\left(x^2+x+\frac{1}{4}\right)}\\
=\delta_{x,y}(\tau)-2e^{\pi i(x+y)}e^{2\pi i xy}q^{-(x+\frac{1}{2})^2/2}.
\end{multline*}
\end{proof}

The following theorem gives the transformation formula for $V_\alpha$ under the action of $M_r$.

\begin{theorem}\label{VTransf}
Let $\alpha = \frac{A}{2C}\tau + \frac{a}{4}$ where $a \in \{0,1,2,3\}$, $\mathrm{gcd}(A,C) = 1$, $0 < \frac{A}{C} < 1$, and $\frac{A}{C} \neq \frac12$ when $a$ is odd.  Additionally, let $M_r = \left( \begin{smallmatrix} 1 & 0\\ r & 1 \end{smallmatrix} \right)$ for $r\in \{1,2\}$ such that $M_r$ is a generator of $G_\alpha$ as in Definition \ref{def:G_a}.  Then for $\tau \in \mathbb{H} \cup S_\alpha \backslash \{ -\frac{1}{r} \}$,
\begin{multline*}
V_\alpha(M_r\tau) = (-1)^{\frac{ar}{2}} e\left(\frac{(a^2+1)r}{8}\right) \sqrt{r\tau+1} \cdot V_{\alpha}(\tau) \\
+ \frac{i}{2} \sqrt{r\tau + 1} \, e\left( \frac{A}{C} \left( \frac{a-1}{2} \right)\right) e\left(\frac{r}{8} (a+1)^2 \right)  \int_{\frac{1}{r}}^{i\infty} \frac{g_{\frac{A}{C}, \frac{1}{2}-\frac{a}{2}}(z)}{\sqrt{-i(z+\tau)}} \, dz.
\end{multline*} 
\end{theorem}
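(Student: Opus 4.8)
The plan is to build on Lemma~\ref{mprops}, which already extracts the automorphy term $(-1)^{ar/2}e(\tfrac{(a^2+1)r}{8})\sqrt{r\tau+1}\,V_\alpha(\tau)$ and reduces the theorem to the single identity
\[
I_{\alpha,r}(\tau)+J_{\alpha,r}(\tau)=\frac{i}{2}\sqrt{r\tau+1}\,e\!\left(\frac{A}{C}\cdot\frac{a-1}{2}\right)e\!\left(\frac{r}{8}(a+1)^2\right)\int_{1/r}^{i\infty}\frac{g_{\frac{A}{C},\frac12-\frac{a}{2}}(z)}{\sqrt{-i(z+\tau)}}\,dz
\]
for $\tau\in\mathbb{H}$, after which I will extend the equality to $\tau\in S_\alpha\setminus\{-\tfrac1r\}$. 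Since $I_{\alpha,r}$ and $J_{\alpha,r}$ are each a single Mordell integral $h$ times elementary factors, the engine of the proof is to rewrite each $h$ as a period integral of the shadow $g$ via Theorem~\ref{thm_z116}(2), using the auxiliary function $\delta_{x,y}$ of Definition~\ref{DeltaDef} together with its shift relations in Lemma~\ref{DeltaLem} to absorb the fact that the relevant parameters leave the fundamental range $(-\tfrac12,\tfrac12)$.

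First I would treat $J_{\alpha,r}$, whose Mordell integral $h\big((\tfrac12-\tfrac{A}{C})\tau-\tfrac{a}{2};\tau\big)$ is evaluated at the modular variable $\tau$. Writing $x=\tfrac12-\tfrac{A}{C}$, which lies in $(-\tfrac12,\tfrac12)$ by hypothesis, and $y=\tfrac{a}{2}$, Definition~\ref{DeltaDef} expresses $q^{-x^2/2}h(x\tau-y;\tau)$ as $e(-x(y+\tfrac12))\big(\delta_{x,y}(\tau)-\int_0^{i\infty}g_{1-\frac{A}{C},\frac{a+1}{2}}(z)/\sqrt{-i(z+\tau)}\,dz\big)$. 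Parts (1)--(3) of Lemma~\ref{lem_gabprop} rewrite $g_{1-A/C,(a+1)/2}$ as an explicit root of unity times the target integrand $g_{A/C,1/2-a/2}$, and the term $\delta_{x,y}(\tau)$ is reduced to the in-range case by Lemma~\ref{DeltaLem}: it vanishes when $a=0$ and otherwise produces explicit correction terms (a Mordell integral and/or a $q$-power) coming from shifting $y=\tfrac{a}{2}$ back into $(-\tfrac12,\tfrac12)$.

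The heart of the argument is $I_{\alpha,r}$, whose Mordell integral $h\big(\tfrac{a}{2}\tau_r-(\tfrac{A}{C}-\tfrac12);\tau_r\big)$ is evaluated at $\tau_r=-\tfrac1\tau-r$. I would again invoke Theorem~\ref{thm_z116}(2) at the modular variable $\tau_r$ (with $\delta$ handling the out-of-range index $\tfrac{a}{2}$) to write this as a period integral $\int_0^{i\infty}g_{\frac{a+1}{2},\frac{A}{C}}(z)/\sqrt{-i(z+\tau_r)}\,dz$, and then transport it to an integral in the original variable $\tau$ by a change of variables built from the inversion $S:z\mapsto-1/z$ and integer translations, under which $g$ transforms by Lemma~\ref{lem_gabprop}(5) and (4) (the inversion swapping the two indices, so that $g_{(a+1)/2,A/C}$ becomes a multiple of $g_{A/C,\,\cdot}$). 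Tracking the effect of this substitution on the contour should carry $(0,i\infty)$ onto a contour with finite endpoint $M_r(\infty)=\tfrac1r$ and produce the automorphy factor $\sqrt{r\tau+1}$ together with the requisite phases; this also explains the exclusion of $\tau=-\tfrac1r$, where $\tau_r=0$ and $M_r\tau$ is undefined.

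Finally I would add the two contributions. The period integrals assemble into the single integral $\int_{1/r}^{i\infty}$ --- the bulk $\int_0^{i\infty}$ coming from $J_{\alpha,r}$ and the reparametrized piece from $I_{\alpha,r}$ --- while the Mordell-integral correction terms generated by Lemma~\ref{DeltaLem} (and, at the integer/half-integer boundaries, by Lemma~\ref{lem_Zlem}) must cancel between $I_{\alpha,r}$ and $J_{\alpha,r}$; collecting the accumulated roots of unity should reproduce exactly the prefactor $\tfrac{i}{2}e(\tfrac{A}{C}\tfrac{a-1}{2})e(\tfrac{r}{8}(a+1)^2)$. The computation naturally splits on the parity of $a$: for $a$ even the indices of $g$ fall on half-integers and Theorem~\ref{thm_z116} applies directly, whereas for $a$ odd they hit integers and one must instead call on Lemma~\ref{lem_Zlem}, and the hypothesis $\tfrac{A}{C}\neq\tfrac12$ for odd $a$ is precisely what keeps the excluded value of that lemma at bay. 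Once the identity is established on $\mathbb{H}$, it extends to $S_\alpha\setminus\{-\tfrac1r\}$ because both sides are well defined there by Theorem~\ref{setsandgroups} and Lemma~\ref{lem:eta_vanishing}. I expect the main obstacle to be the change of variables transporting the $\tau_r$-integral of $I_{\alpha,r}$ to a $\tau$-integral with the correct endpoint $\tfrac1r$, together with verifying the exact cancellation of all Mordell correction terms and the precise bookkeeping of the roots of unity.
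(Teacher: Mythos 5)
Your plan follows the paper's own proof essentially step for step: reduce via Lemma~\ref{mprops} to the identity $I_{\alpha,r}+J_{\alpha,r}=\frac{i}{2}\sqrt{r\tau+1}\,e(\cdots)\int_{1/r}^{i\infty}$, rewrite each Mordell integral through $\delta_{x,y}$ and Lemma~\ref{DeltaLem}, transport the $\tau_r$-integral in $I_{\alpha,r}$ to the $\tau$-variable by the substitution $z=r-\tfrac1u$ using Lemma~\ref{lem_gabprop}, and verify the residual $\delta$-cancellation case by case in $a$ (Theorem~\ref{thm_z116}(2) for $a$ even, Lemma~\ref{lem_Zlem} for $a$ odd, where $\tfrac{A}{C}\neq\tfrac12$ is needed), before extending to $S_\alpha\setminus\{-\tfrac1r\}$ by real analyticity. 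The remaining work is only the explicit bookkeeping of roots of unity, which you correctly flag; the strategy is sound and matches the paper.
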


\begin{proof}
We first consider when $\tau \in \mathbb{H}$.  Let $I_{\alpha,r}(\tau)$ and $J_{\alpha,r}(\tau)$ be defined as in Lemma \ref{mprops}.  From Definition \ref{DeltaDef}, as well as Lemma \ref{lem_gabprop}, we can rewrite 

\begin{equation}\label{eq:J1}
J_{\alpha,r}(\tau) = \frac{i}{2} e\left( \frac{r}{8}(a+1)^2 \right) e\left(\frac{A}{C} \left( \frac{a+1}{2} \right) \right) \sqrt{r\tau + 1} \cdot \left[\delta_{\frac12 - \frac{A}{C} , \frac{a}{2}} (\tau) + e\left( \frac{-A}{C} \right)  \int_{0}^{i\infty}  \frac{g_{\frac{A}{C}, \frac12 -\frac{a}{2}}(z)}{\sqrt{-i(z+\tau)}} \, dz \right].
\end{equation}

We can similarly rewrite 
\begin{equation}\label{eq:I1}
I_{\alpha,r}(\tau) = -\frac12 \sqrt{-i\tau_r}  \int_{0}^{i\infty} \frac{g_{\frac{a}{2}+\frac{1}{2}, \frac{A}{C}}(z)}{\sqrt{-i(z+\tau_r)}} \, dz  + \frac12 \sqrt{-i\tau_r} \; \delta_{\frac{a}{2},\frac{A}{C} - \frac12}(\tau_r).
\end{equation}

Changing variables by letting $z=r-\frac{1}{u}$ in the integral
\[
\int_{0}^{i\infty} \frac{g_{\frac{a}{2}+\frac{1}{2}, \frac{A}{C}}(z)}{\sqrt{-i(z+\tau_r)}} \, dz,
\]
and recalling the definition \eqref{TauDefn}, we get that 
\[
\int_{0}^{i\infty} \frac{g_{\frac{a}{2}+\frac{1}{2}, \frac{A}{C}}(z)}{\sqrt{-i(z+\tau_r)}} \, dz = \sqrt{\tau} \int_{\frac{1}{r}}^{0} \frac{g_{\frac{a}{2}+\frac{1}{2}, \frac{A}{C}}(r-\frac{1}{u})}{\sqrt{i(u+\tau)}} \, \frac{du}{u^{3/2}}.
\]
Applying Lemma \ref{lem_gabprop} (1), (2), and (5), further yields that 
\begin{multline*}
 \int_{0}^{i\infty} \frac{g_{\frac{a}{2}+\frac{1}{2}, \frac{A}{C}}(z)}{\sqrt{-i(z+\tau_r)}} \, dz 
 = \sqrt{\tau} \, e\left(\frac{r}{8} (a+1)^2 \right)  \int_{\frac{1}{r}}^{0} \frac{g_{\frac{a}{2}+\frac{1}{2}, \frac{A}{C}}(-\frac{1}{u})}{\sqrt{i(u+\tau)}} \, \frac{du}{u^{3/2}}\\
 = -i \sqrt{-i \tau}\,e\left( \frac{A}{C} \left( \frac{a-1}{2} \right)\right) e\left(\frac{r}{8} (a+1)^2 \right)  \int_{\frac{1}{r}}^{0} \frac{g_{\frac{A}{C}, \frac{1}{2}-\frac{a}{2}}(u)}{\sqrt{-i(u+\tau)}} \, du,
\end{multline*}
so substituting this into \eqref{eq:I1} gives that
\begin{equation}\label{eq:I2}
I_{\alpha,r}(\tau) = \frac{i}{2} \sqrt{r\tau + 1} \, e\left( \frac{A}{C} \left( \frac{a-1}{2} \right)\right) e\left(\frac{r}{8} (a+1)^2 \right)  \int_{\frac{1}{r}}^{0} \frac{g_{\frac{A}{C}, \frac{1}{2}-\frac{a}{2}}(z)}{\sqrt{-i(z+\tau)}} \, dz
+ \frac12 \sqrt{-i\tau_r} \; \delta_{\frac{a}{2},\frac{A}{C} - \frac12}(\tau_r).
\end{equation}
Combining \eqref{eq:I2} and \eqref{eq:J1} gives that
\begin{multline*}
I_{\alpha,r}(\tau)+J_{\alpha,r}(\tau) = 
\frac{i}{2} \sqrt{r\tau + 1} \, e\left( \frac{A}{C} \left( \frac{a-1}{2} \right)\right) e\left(\frac{r}{8} (a+1)^2 \right)  \int_{\frac{1}{r}}^{i\infty} \frac{g_{\frac{A}{C}, \frac{1}{2}-\frac{a}{2}}(z)}{\sqrt{-i(z+\tau)}} \, dz \\
+ \frac12 \sqrt{-i\tau_r} \left[ \delta_{\frac{a}{2},\frac{A}{C} - \frac12}(\tau_r) +  
i \sqrt{-i \tau}  \, e\left( \frac{r}{8}(a+1)^2 \right) e\left(\frac{A}{C} \left( \frac{a+1}{2} \right) \right) \delta_{\frac12 - \frac{A}{C} , \frac{a}{2}} (\tau) \right]. 
\end{multline*} 
In order to establish the transformation for $\tau \in \mathbb{H}$, it thus remains to show that 
\begin{equation}\label{Delta0}
 \delta_{\frac{a}{2},\frac{A}{C} - \frac12}(\tau_r) +  
i \sqrt{-i \tau}  \, e\left( \frac{r}{8}(a+1)^2 \right) e\left(\frac{A}{C} \left( \frac{a+1}{2} \right) \right) \delta_{\frac12 - \frac{A}{C} , \frac{a}{2}} (\tau) = 0.
\end{equation}

By Theorem \ref{thm_z116} (2), we see that $\delta_{x,y} =0$ whenever $x,y\in (-\frac12, \frac12)$.  By hypothesis, $\frac{A}{C} \in (0,1)$, so in the case when $a=0$, we see trivially that \eqref{Delta0} vanishes.  When $a=2$,  we can similarly see that \eqref{Delta0} vanishes after using Lemma \ref{DeltaLem} to first shift each delta term into the case when $x,y\in (-\frac12, \frac12)$.  

When $a$ is odd, we first observe that $r=2$, by Definition \ref{def:G_a}.  We also assume that $\frac{A}{C} \neq \frac12$ so that we may use Lemma \ref{lem_Zlem} (2) and (3).  In the case when $a=1$, applying Lemma \ref{lem_Zlem} (2) shows that $\delta_{\frac{1}{2},\frac{A}{C} - \frac12}(\tau_r) =i$, and applying Lemma \ref{lem_Zlem} (3) shows that $\delta_{\frac12 - \frac{A}{C} , \frac{1}{2}} (\tau) = e(\frac12 - \frac{A}{C})/\sqrt{-i\tau}$, which together show that \eqref{Delta0} vanishes.  Similarly, in the case when $a=3$, we first use Lemma \ref{DeltaLem} to shift each delta term to match the case when $a=1$, which then shows that \eqref{Delta0} vanishes.

Now that we have established the transformation for $\tau \in \mathbb{H}$, continuation to $\tau \in S_\alpha \backslash \{ -\frac{1}{r} \}$ follows from Theorem \ref{setsandgroups}, and the fact that the integral appearing in the transformation is a real analytic function except at $-\frac{1}{r}$, as argued in \cite{BOPR, BR, FGKST, ZagierV}, for example.  
\end{proof} 

We can now easily prove Theorem \ref{quantummodularity}.

\begin{proof}[Proof of Theorem \ref{quantummodularity}]
Parts (1) and (2) follows as a corollary to Theorem \ref{VTransf} by considering the cases when $a=0,1,2,3$ separately, and applying Lemma \ref{lem_gabprop} (2) in the cases when $a=2,3$.  Part (3) follows as an immediate corollary to Proposition \ref{ttransshift} by considering the cases when $C$ is even or odd separately. 
\end{proof}


\begin{thebibliography}{10}

\bibitem{Andrews}
George~E. Andrews.
\newblock {\em The theory of partitions}.
\newblock Addison-Wesley Publishing Co., Reading, Mass.-London-Amsterdam, 1976.
\newblock Encyclopedia of Mathematics and its Applications, Vol. 2.

\bibitem{BFOR}
Kathrin Bringmann, Amanda Folsom, Ken Ono, and Larry Rolen.
\newblock {\em Harmonic Maass forms and mock modular forms: theory and
  applications}, volume~64.
\newblock Amer. Math. Soc., 2017.

\bibitem{BFR2}
Kathrin Bringmann, Amanda Folsom, and Robert~C. Rhoades.
\newblock Unimodal sequences and ``strange'' functions: a family of quantum
  modular forms.
\newblock {\em Pacific J. Math.}, 274(1):1--25, 2015.

\bibitem{BR}
Kathrin Bringmann and Larry Rolen.
\newblock Half-integral weight {E}ichler integrals and quantum modular forms.
\newblock {\em J. Number Theory}, 161:240--254, 2016.

\bibitem{BF}
Jan~Hendrik Bruinier and Jens Funke.
\newblock On two geometric theta lifts.
\newblock {\em Duke Math. J.}, 125(1):45--90, 2004.

\bibitem{BOPR}
Jennifer Bryson, Ken Ono, Sarah Pitman, and Robert~C. Rhoades.
\newblock Unimodal sequences and quantum and mock modular forms.
\newblock {\em Proc. Natl. Acad. Sci. USA}, 109(40):16063--16067, 2012.

\bibitem{CN}
J.~H. Conway and S.~P. Norton.
\newblock Monstrous moonshine.
\newblock {\em Bull. London Math. Soc.}, 11(3):308--339, 1979.

\bibitem{DKM}
D.~Dummit, H.~Kisilevsky, and J.~McKay.
\newblock Multiplicative products of {$\eta$}-functions.
\newblock In {\em Finite groups---coming of age ({M}ontreal, {Q}ue., 1982)},
  volume~45 of {\em Contemp. Math.}, pages 89--98. Amer. Math. Soc.,
  Providence, RI, 1985.

\bibitem{FGKST}
Amanda Folsom, Sharon Garthwaite, Soon-Yi Kang, Holly Swisher, and Stephanie
  Treneer.
\newblock Quantum mock modular forms arising from eta-theta functions.
\newblock {\em Res. Number Theory}, 2:Art. 14, 41, 2016.

\bibitem{FOR}
Amanda Folsom, Ken Ono, and Robert~C. Rhoades.
\newblock Mock theta functions and quantum modular forms.
\newblock {\em Forum Math. Pi}, 1:e2, 27, 2013.

\bibitem{JSS}
Chris Jennings-Shaffer and Holly Swisher.
\newblock Mock modularity of the {$M_d$}-rank of overpartitions.
\newblock {\em J. Math. Anal. Appl.}, 466(2):1144--1189, 2018.

\bibitem{Kang}
Soon-Yi Kang.
\newblock Mock jacobi forms in basic hypergeometric series.
\newblock {\em Compositio Mathematica}, 145(03):553--565, 2009.

\bibitem{Knopp}
Marvin~I. Knopp.
\newblock {\em Modular functions in analytic number theory}.
\newblock Markham Publishing Co., Chicago, Ill., 1970.

\bibitem{Martin}
Yves Martin.
\newblock Multiplicative {$\eta$}-quotients.
\newblock {\em Trans. Amer. Math. Soc.}, 348(12):4825--4856, 1996.

\bibitem{LO}
Robert J.~Lemke Oliver.
\newblock Eta-quotients and theta functions.
\newblock {\em Advances in Mathematics}, 241:1--17, 2013.

\bibitem{Ono-Web}
Ken Ono.
\newblock {\em The web of modularity: arithmetic of the coefficients of modular
  forms and {$q$}-series}, volume 102 of {\em CBMS Regional Conference Series
  in Mathematics}.
\newblock Published for the Conference Board of the Mathematical Sciences,
  Washington, DC; by the American Mathematical Society, Providence, RI, 2004.

\bibitem{ZagierV}
Don Zagier.
\newblock Vassiliev invariants and a strange identity related to the {D}edekind
  eta-function.
\newblock {\em Topology}, 40(5):945--960, 2001.

\bibitem{ZagierB}
Don Zagier.
\newblock Ramanujan's mock theta functions and their applications (after
  {Z}wegers and {O}no-{B}ringmann).
\newblock {\em Ast\'{e}risque}, (326):Exp. No. 986, vii--viii, 143--164 (2010),
  2009.
\newblock S\'{e}minaire Bourbaki. Vol. 2007/2008.

\bibitem{Zagier}
Don Zagier.
\newblock Quantum modular forms.
\newblock In {\em Quanta of maths}, volume~11 of {\em Clay Math. Proc.}, pages
  659--675. Amer. Math. Soc., Providence, RI, 2010.

\bibitem{Zwegers2}
S.~P. Zwegers.
\newblock Mock {$\theta$}-functions and real analytic modular forms.
\newblock In {\em {$q$}-series with applications to combinatorics, number
  theory, and physics ({U}rbana, {IL}, 2000)}, volume 291 of {\em Contemp.
  Math.}, pages 269--277. Amer. Math. Soc., Providence, RI, 2001.

\bibitem{zwegers}
S.~P. Zwegers.
\newblock {\em Mock theta functions}.
\newblock PhD thesis, Universiteit Utrecht, 2002.

\end{thebibliography}
\end{document}